\author{Manuel Bodirsky}
    \address{Laboratoire d'Informatique  (LIX), CNRS UMR 7161\\
    \'{E}cole Polytechnique \\91128 Palaiseau\\
    France}
    \email{bodirsky@lix.polytechnique.fr}
    \urladdr{http://www.lix.polytechnique.fr/~bodirsky/}
\author{Michael Pinsker}
    \address{\'{E}quipe de Logique Math\'{e}matique\\ Universit\'{e} Diderot-Paris 7\\
	UFR de Math\'{e}matiques\\ 
	75205 Paris Cedex 13, France}
    \email{marula@gmx.at}
    \urladdr{http://dmg.tuwien.ac.at/pinsker/}
    \thanks{The research leading to these results has received funding from the European Research Council under the European Community's Seventh Framework Programme (FP7/2007-2013 Grant Agreement no. 257039). The second author is grateful for support through Erwin Schr\"{o}dinger Fellowship J2742-N18 of the Austrian Science Fund and through an APART-fellowship of the Austrian Academy of Sciences.}
\title[Minimal Functions on the Random Graph]{Minimal Functions on the Random Graph}
\subjclass[2000]{Primary 03C10; secondary 05C80; 08A35; 05C55;
03C40}
\date{\today}
\keywords{random graph, $\omega$-categoricity, minimal function, Ramsey theory, automorphism, endomorphism,
polymorphism, local clone,
model-completeness, first-order definition, existential positive definition, primitive positive definition}
\theoremstyle{plain}
    \newtheorem{thm}{Theorem}
    \newtheorem{theorem}[thm]{Theorem}
    \newtheorem{lem}[thm]{Lemma}
    \newtheorem{prop}[thm]{Proposition}
    \newtheorem{proposition}[thm]{Proposition}
    \newtheorem{cor}[thm]{Corollary}
\theoremstyle{definition}
    \newtheorem{defn}[thm]{Definition}
    \newtheorem{definition}[thm]{Definition}
\theoremstyle{remark}
\DeclareMathOperator{\sw}{sw}
\DeclareMathOperator{\flip}{--}
\newcommand{\ignore}[1]{}
\newcommand{\nin}{\notin}
\providecommand{\Aut}{\mathop{\rm Aut}\nolimits}
\providecommand{\End}{\mathop{\rm End}\nolimits}
\providecommand{\Emb}{\mathop{\rm Emb}\nolimits}
\DeclareMathOperator{\typ}{tp_{qf}} 
\DeclareMathOperator{\id}{id}
\newcommand{\mult}{\times}
\newcommand{\Q}{{\mathcal Q}}
\newcommand{\U}{{\mathcal U}}
\newcommand{\C}{{\mathcal C}}
\newcommand{\R}{{\mathcal R}}
\newcommand{\F}{{\mathcal F}}
\newcommand{\A}{{\mathcal A}}
\newcommand{\B}{{\mathcal B}}
\renewcommand{\P}{{\mathcal P}}
\newcommand{\G}{{\mathcal G}}
\newcommand{\M}{{\mathcal M}}
\newcommand{\T}{{\mathcal T}}
\newcommand{\K}{{\mathcal K}}
\renewcommand{\H}{{\mathcal H}}
\renewcommand{\S}{{\mathcal S}}
\newcommand{\To}{\rightarrow}
\newcommand{\cal}[1]{\ensuremath{\mathcal{#1}}}
\begin{document}

%-------------------------------------------------------------------------
\begin{abstract}
We show that there is a system of 14 non-trivial finitary functions on the random graph with the following properties: Any non-trivial function on the random graph generates one of the functions of this system by means of composition with automorphisms and by topological closure, and the system is minimal in the sense that no subset of the system has the same property. The theorem is obtained by proving a Ramsey-type theorem for colorings of tuples in finite powers of the random graph, and by applying this to find regular patterns in the behavior of any function on the random graph. As model-theoretic corollaries of our methods we rederive a theorem of Simon Thomas classifying the first-order closed reducts of the random graph, and prove some refinements of this theorem; also, we obtain a classification of the maximal reducts closed under primitive positive definitions, and prove that all reducts of the random graph are model-complete.
\end{abstract}
\maketitle

\section{Introduction}\label{sect:prelims}

\subsection{The random graph}
The \emph{random graph} (also called the \emph{Rado graph}) is the countably infinite graph $G=(V;E)$
defined uniquely up to isomorphism by the \emph{extension property}: for all
finite disjoint subsets $U,U'$ of the countably infinite vertex set
$V$ there exists a vertex $v \in V \setminus (U \cup U')$ such that
$v$ is in $G$ adjacent to all vertices in $U$ and to no vertex in
$U'$. Alternatively, $G$ is the unique countable graph which is \emph{universal} in the sense that it contains all finite graphs as induced subgraphs, and  \emph{homogeneous} in the sense that any isomorphism between finite induced subgraphs of $G$ extends to an automorphism of $G$. 
For the many remarkable properties of
$G$ and its automorphism group $\Aut(G)$, and various connections to
many branches of mathematics, see
e.g.~\cite{RandomCameron,RandomRevisitedCameron}.

\subsection{Minimal functions}

We say that a finitary operation $f \colon V^k \rightarrow V$
\emph{generates} an operation $g \colon  V^l \rightarrow V$
iff $g$ is contained in the topological closure of the set of term functions that can be built from $f$ and the automorphisms of $G$, where the topology on functions is just the pointwise convergence topology -- we refer to Section~\ref{sect:results} for a more technical definition.

By this relation of function generation, the functions on the random graph are quasi-ordered with respect to their ``generating strength''. The weakest functions in this order are the \emph{trivial functions}, which we define to be those functions that are generated by the identity $\id \colon V\To V$; these trivial functions are the self-embeddings of $G$ with possibly additional dummy variables. On the next level are the \emph{minimal functions}: An operation $f$ is called \emph{minimal} 
iff it is non-trivial, and all non-trivial functions $g$ it generates have at least the arity of $f$ and generate $f$. Simon Thomas proved in \cite{RandomReducts} that there are exactly two minimal unary bijective
operations on the random graph that do not generate each other. In this paper
we generalize this to arbitrary finitary operations, and show that there are
exactly 14 minimal operations on $G$ that do not generate each other:
these are a constant operation, an operation that maps $G$ injectively to a complete subgraph of $G$, an operation that maps $G$ injectively to an independent subset of $G$, the two operations considered by Thomas, and nine binary
injective operations.

\subsection{Ramsey theory}

In our proof, we apply in a systematic way structural Ramsey theory.
Any function $f \colon V\To V$ induces a coloring of the edges of the random graph by three colors: each edge might either be sent to an edge, to a non-edge, or be collapsed to a single vertex. Similarly, $f$ induces a coloring of the non-edges. If $f$ is not unary, but a function from a power $V^k$ to $V$, then it induces colorings of pairs of elements of $V^k$ of a fixed type, and so on. We will use a theorem of Ne\v{s}et\v{r}il and R\"{o}dl from ~\cite{NesetrilRoedlPartite,NesetrilRoedlOrderedStructures} (and independently by~\cite{AbramsonHarrington}) which states
 that finite ordered vertex-colored
graphs form a Ramsey class in order to prove a Ramsey-type theorem which in turn allows us to find regular patterns in these colorings, for any function $f$. This makes it feasible to understand the generating process of functions; in particular, all minimal functions turn out to have \emph{canonical} behavior in the sense that seen from the right perspective, the colorings they induce are all constant.

\subsection{Universal algebra: Clones}
In universal algebra, a \emph{clone} on a set $D$ is a subset of the set
$\mathcal O$ of all finitary operations on $D$ which is closed under composition of operations and which contains the projections (see e.g.~\cite{Szendrei,KaluzninPoeschel}). 
The set of all clones over $D$ forms a complete lattice with respect to set-theoretical inclusion; see~\cite{GoldsternPinsker} for a survey of results on this lattice for infinite $D$. In many applications, however, e.g., in theoretical computer science, 
one does not study the lattice of all clones, but the smaller lattice of those clones which are closed in the
the pointwise convergence topology on $\mathcal O$. This topology is given by the countable basis of sets of the form
$${\cal O}^s_A:=\{f\in {\cal O}\; | \; f|_A = s\},$$
where $A\subseteq D^n$ is finite and $s \colon A\To D$ is a function. Clones which are closed subsets of
$\mathcal O$ in this topology are called
\emph{locally closed}, or just \emph{local}. The importance of such clones is reminiscent of that of closed permutation groups (rather than arbitrary permutation groups) for some applications (see e.g.~\cite{Oligo}).

The lattice of local clones has been studied in~\cite{Pin-morelocal}, and it turns
out to be quite complicated. However, one is often interested in specific parts of this lattice, in particular the interval of all local clones that contain $\Aut(\Gamma)$, for a structure $\Gamma$. When $\Gamma$ is \emph{$\omega$-categorical}, i.e., every countable model of the first-order theory of $\Gamma$ is isomorphic to $\Gamma$, then it turns out that  techniques similar to those for clones over
finite domains can be applied for the study of this interval, in particular when classifying its atoms ~\cite{OligoClone,BodChenPinsker}.

If we consider the lattice of local clones
containing $\Aut(G)$, 
then it is easy to see that a clone $\C$ is an atom in this lattice iff there exists a minimal operation $f$ on $G$ such that $\C$ is the smallest local clone containing the set  $\{f\} \cup \text{Aut}(G)$; we then say that $\C$ is the local clone generated by $\{f\}$ over $\text{Aut}(G)$. Hence, in this paper we determine all atoms of the lattice of local clones containing $\Aut(G)$.

\subsection{Groups and monoids}

Similarly to clones, the (topologically) closed permutation groups containing $\Aut(G)$ form a complete lattice, with the meet of a set of groups being their intersection; so do the closed transformation monoids containing $\Aut(G)$. By determining the minimal functions on the random graph we find the atoms not only of the lattice of local clones containing $\Aut(G)$, but also of the corresponding group and monoid lattices. In the group case, it turns out that if one continues ``climbing up" in the lattice, i.e., if after the atoms of the lattice one determines the next level and so on, one finds the whole lattice as the lattice has only five elements. This was shown already by Thomas in \cite{RandomReducts}, and we will rederive this result. Our methods also allow to follow the same strategy for the other two lattices, but the iteration does not terminate as these lattices have infinite height.

\subsection{Model theory: Reducts of the random graph}\label{subsect:intro:model}

Results
about operations on the random graph $G$ yield model-theoretic results about \emph{reducts} of $G$, i.e.,
about relational structures with the same domain as $G$ whose relations have a first-order definition in $G$; this is particularly true because $G$ is $\omega$-categorical. In fact, if we consider two reducts equivalent iff they first-order define one another, then the lattice of all reducts, factored by this equivalence, is antiisomorphic to the lattice of closed permutation groups that contain $\Aut(G)$. Similarly, the finer lattice of reducts up to \emph{existential positive interdefinability} corresponds to the lattice of closed transformation monoids that contain $\Aut(G)$. Finally, the lattice of reducts factored by the even finer equivalence of \emph{primitive positive interdefinability} (a first-order formula is \emph{primitive positive} iff it contains no negations, disjunctions, and universal quantifications), corresponds to the lattice of closed clones that contain $\Aut(G)$. Using the latter connection, we obtain a list of the dual atoms in the lattice of reducts up to primitive positive interdefinability; 
there are 14 such dual atoms, each corresponding to one of the minimal operations mentioned above.

As another application of the techniques in this paper we
rederive the full result of Thomas from~\cite{RandomReducts},
which is in fact a classification of
the reducts of $G$ up to \emph{first-order interdefinability}. We show that the result can be strengthened to obtain a classification of
those structures up to \emph{existential interdefinability}. Finally, we show that all reducts $\Gamma$ of $G$ have a model-complete theory. 
%, i.e., all self-embeddings of a reduct $\Gamma$ are elementary.

\subsection{Computational complexity: Constraint satisfaction}
Many computational problems in theoretical computer science can be elegantly
formalized in the following way. Fix a structure $\Gamma$
with finite relational signature. Then the \emph{constraint satisfaction problem for $\Gamma$} (CSP$(\Gamma)$) is the problem of deciding whether a given
primitive positive sentence is true in $\Gamma$. The computational complexity of CSP$(\Gamma)$
has been determined for all two-element structures in~\cite{Schaefer},
for all three-element structures in~\cite{Bulatov}, and for all
structures with a first-order definition in $(\mathbb Q;<)$ in~\cite{tcsps-journal}. The results of the present paper provide the necessary
mathematical techniques for a complexity classification for CSP$(\Gamma)$
when $\Gamma$ has a first-order definition in the random graph~\cite{BodPin-Schaefer}; such constraint satisfaction problems constitute a generalization of Boolean constraint satisfaction problems to the language of graphs.
We have to refer to the introduction of~\cite{BodChenPinsker}, the survey paper~\cite{BP-reductsRamsey}, or the papers~\cite{BodPinTsa} and ~\cite{Topo-Birk} for a more detailed description of the general connection of reducts of $\omega$-categorical structures with the CSP.

\subsection{Structure of this paper}
%We will write $E(x,y)$ or $xy \in E$ to express that two vertices
%$x,y \in V$ are adjacent in the random graph. 
%The relation $N \subseteq V^2$ is defined

This introduction will be followed by Section~\ref{sect:results} in which we present our results on functions on the random graph in full detail. We then discuss these results from a model-theoretic perspective and draw some corollaries in Section~\ref{sect:model}; this section can be skipped by anyone not interested in the matter.  
The proof of the results in Section~\ref{sect:results} starts with Section~\ref{ssect:ramsey}, where we recall some Ramsey-type theorems and extend them for our purposes. We then apply these theorems to mappings from $V$ to $V$ in order to get hold of such mappings in Section~\ref{sect:structureInMappings}. This allows us to determine the minimal unary functions in Section~\ref{sect:minimalUnary}. Turning to functions of higher arity in Section~\ref{sect:reducingArity} , we show that minimal higher arity functions are always binary injections. In order to understand binary minimal functions, we develop further Ramsey-theoretic tools in Section~\ref{sect:productRamsey}. Finally, in Section~\ref{sect:minimalBinary}, we determine the minimal binary injections, completing our proof.

\section{Results}\label{sect:results}

\subsection{The minimal functions result} 
When $f \colon V^n \rightarrow V$ and $g_1, \dots,g_n\colon V^m \rightarrow V$
are operations, then the \emph{composition} of $f$ with $g_1,\dots,g_n$
is the operation defined by
$$(x_1,\dots,x_m) \mapsto f(g_1(x_1,\dots,x_m),\dots,g_n(x_1,\dots,x_m)) \; .$$
An operation $f \colon V^n \rightarrow V$ is called a \emph{projection} iff there exists $1\leq i \leq n$ such
that $f(x_1,\dots,x_n)=x_i$ for all $x_1,\dots,x_n \in V$.

The following definition of the notion \emph{generates} is equivalent to the one in the introduction.

\begin{defn}
Let  $f \colon V^k \rightarrow V$ and $g \colon  V^l \rightarrow V$. We say that $f$
\emph{generates} $g$ 
iff for every finite subset $S$ of $V^l$ the restriction of $g$ to $S$ equals the
restriction to $S$ of an $l$-ary operation that can be obtained from $f$, automorphisms of $G$,
and the projections by a sequence of compositions of operations.
\end{defn}

We also give the definition of a \emph{minimal function} in full detail. 

\begin{defn}Let $f \colon V^k \rightarrow V$ and $g \colon  V^l \rightarrow V$.
\begin{itemize}
\item $f$ and $g$
are \emph{equivalent} iff $f$ generates $g$ and $g$ generates $f$.
\item $g$  is \emph{trivial} iff it is equivalent to the identity function on $V$.
\item $f$ is \emph{minimal} iff it is not trivial, and all non-trivial functions $g$ generated by $f$ have arity at least $k$ and are equivalent to $f$.
\end{itemize}
\end{defn}

We now define a small number of special operations on $G$.
The random graph contains all countable graphs as induced subgraphs, and in particular,
it contains an infinite complete subgraph, denoted by
$K_\omega$.
It follows from the homogeneity of $G$
that all injective operations
from $V$ to $V$ whose image induces $K_\omega$ in $G$
generate each other. Let $e_E$ be one such injective operation. 

We define $N := \{(x,y) \in V^2 \; | \; (x,y) \notin E\,\wedge\, x \neq y\}$. Pairs $\{x,y\}$ with $(x,y) \in N$ are referred to as \emph{non-edges}. 
$G$ contains an
infinite independent set, denoted by $I_\omega$. Let $e_N$ be an
injective operation from $V$ to $V$ whose image induces
$I_\omega$ in $G$.

It is clear that the complement graph of $G$, i.e., the graph on $V$ obtained by flipping all edges and non-edges of $G$, is isomorphic to $G$.
Again, note that by homogeneity of $G$ all isomorphisms between $G$ and
its complement generate each other.
Let $\flip$ be one such isomorphism. In formulas, we will 
write $-x$ for $\flip(x)$. 

For any finite non-empty subset $S$ of $V$, if we flip edges and non-edges
between $S$ and $V \setminus S$ in $G$, then the resulting graph is
isomorphic to $G$ (it is straightforward to verify the extension
property). All such isomorphisms generate each other.
For each non-empty
finite $S$, we let $i_S$ be such an isomorphism.
We also write $\sw$ for $i_{\{0\}}$, where $0 \in V$
is a fixed element for the rest of the paper, and refer to this
operation as the \emph{switch}.

To take a break from the definitions, we state the first part of our main theorem, which characterizes the minimal unary functions on $G$.

\begin{theorem}\label{thm:mainUnary}
Any minimal unary function on $G$ is equivalent to exactly one of the following operations:
\begin{enumerate}
\item a constant operation;
\item $e_N$;
\item $e_E$;
\item $\flip$;
\item $\sw$.
\end{enumerate}
\end{theorem}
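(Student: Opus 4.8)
The plan is to use structural Ramsey theory to pass from $e$ to an operation it generates whose action on pairs of vertices is completely regular, and then to go through the short list of possibilities that remain. Since $e$ is minimal it is non-trivial, and by the definition of minimality it will be enough to exhibit \emph{some} non-trivial operation generated by $e$ that is equivalent to one of the five operations in the statement; such an operation is then automatically equivalent to $e$.

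First I would canonize. The operation $e$ colours each edge of $G$ by one of three colours --- ``edge'', ``non-edge'', ``point'' --- according to whether $e$ sends that edge to an edge, to a non-edge, or collapses it to a single vertex, and similarly it colours the non-edges. Applying the Ramsey theorem for finite ordered vertex-coloured graphs of Ne\v{s}et\v{r}il--R\"{o}dl and Abramson--Harrington, via the Ramsey-type theorem proved in this paper, I would obtain an operation $e'$ generated by $e$ over $\Aut(G)$ which is \emph{canonical}: whether $e'$ sends a pair $\{x,y\}$ to an edge, a non-edge, or a point depends only on the type of $(x,y)$ in $G$ expanded by an auxiliary linear order and by a finite tuple of constants, where for a unary operation this tuple can be taken to have length at most one. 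The role of the single constant --- call it $0$ --- is essential: \sw\ is canonical over $(G,0)$ but is not equivalent to any operation canonical over $G$ alone, since every operation generated by \sw\ acts on a finite set $F$ by flipping the edges of some cut (the pairs of $F$ separated by a fixed subset of $F$), and ``flip every pair of $F$'' is not a cut once $|F|\ge 3$. So allowing $0$ is exactly what makes \sw\ appear in this picture.

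Next I would run through the canonical operations. A canonical unary operation over $G$ is coded by a pair $(p,q)$ recording its behaviour on edges and on non-edges. If $p$ is ``point'', then $e'$ collapses every edge, so $e'$ is constant because $G$ is connected --- case (1); symmetrically if $q$ is ``point'', using that the complement of $G$ is connected. If $(p,q)=(\text{edge},\text{non-edge})$ then $e'$ is a self-embedding of $G$ and hence trivial. In the remaining three cases $e'$ is injective, and by homogeneity of $G$ it is equivalent to $e_E$ when $(p,q)=(\text{edge},\text{edge})$ (its image induces an infinite complete graph), to $e_N$ when $(p,q)=(\text{non-edge},\text{non-edge})$, and to $-$ when $(p,q)=(\text{non-edge},\text{edge})$ (it is an isomorphism between $G$ and its complement). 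When the constant $0$ is present, I would treat the pairs that avoid $0$ and the pairs that meet $0$ separately: if $e'$ is non-trivial and generates none of the constants, $e_E$, $e_N$, $-$, then its restriction to $V\setminus\{0\}$ must be an embedding (otherwise one of those four operations is produced exactly as above, using $G\setminus\{0\}\cong G$), so $e'$ agrees with the identity away from $0$ and only changes the adjacencies at $0$, whence a short argument with homogeneity identifies $e'$ up to equivalence with $i_{\{0\}}=\sw$ --- case (5). Finally I would check that the five operations are pairwise inequivalent: a constant operation is not injective while the other four are; $e_E$ (resp.\ $e_N$) maps onto a clique (resp.\ an independent set), and no operation built from automorphisms together with copies of $-$ and of switches, or with copies of $e_N$ (resp.\ $e_E$), does this; and $-$ and \sw\ are told apart by the cut observation above.

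The main obstacle is the canonization step, and specifically the bound of one constant in the unary case: it is this bound that rules out any configuration beyond \sw\ once the constants are taken into account, and hence guarantees that the list of five is exhaustive rather than missing some further minimal unary operation. A secondary and more routine task is to verify that each of the five listed operations really is minimal --- i.e.\ that every non-trivial operation it generates is equivalent to it --- which is immediate for the constant, $e_E$ and $e_N$, and for $-$ and \sw\ follows again from the description of the generated operations as flips of cuts.
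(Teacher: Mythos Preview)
Your strategy of ``canonize, then enumerate'' is natural, but the step you yourself flag as the main obstacle is a genuine gap rather than a routine detail. You assert that $e$ generates a non-trivial $e'$ canonical over $(G,c)$ for a \emph{single} constant $c$. The canonization available here (Lemmas~\ref{lem:n-partite-interpolation} and~\ref{lem:n-constant-interpolation}) arranges canonical behaviour over $(G,c_1,\dots,c_n)$; to keep $e'$ non-trivial one pins down $e'(c_i)=e(c_i)$ on a witness of non-triviality, and for the random graph such a witness is a \emph{pair} --- the endpoints of a violated edge or non-edge. So two constants come for free, and reducing to one appears to be as hard as the theorem itself (a posteriori every minimal unary function \emph{is} equivalent to something canonical over $(G,0)$, but that is a corollary, not an input). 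There is also a slip in your one-constant analysis: you argue that if $e'$ generates none of the constant, $e_E$, $e_N$, $-$, then $e'|_{V\setminus\{0\}}$ is an embedding, appealing to the zero-constant case on $G\setminus\{0\}\cong G$. But $e'$ is only canonical on the $2$-partitioned graph given by the neighbours $A$ and non-neighbours $B$ of $0$, not on $G\setminus\{0\}$ as a plain graph; it may behave like the identity on $A$ and on $B$ while behaving like $-$ between them. Such an $e'$ is not an embedding on $V\setminus\{0\}$ and generates $\sw$, not $-$, so your dichotomy fails as stated.

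The paper avoids global canonization entirely and works with two constants. It first shows (Proposition~\ref{prop:interpolation}) that $e$ is canonical on arbitrarily large finite subgraphs; if any non-identity behaviour appears there, $e$ interpolates one of the constant, $e_E$, $e_N$, $-$. Otherwise one fixes a specific violated edge $\{x,y\}$ and applies the $2$-constant Ramsey lemma with constants $x,y$ (proof of Proposition~\ref{prop:endos-weak}). A sequence of observations then eliminates every possible non-identity behaviour on and between the proper parts of the associated partitioned graph --- each such behaviour manufactures $e_E$, $e_N$, or $\sw$ --- until the only remaining freedom is what $e$ does to the single pair $\{x,y\}$; deleting one edge at a time then yields $e_N$. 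The on-parts and between-parts case analysis that you compress into ``must be an embedding'' is precisely where the work lies.
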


We now turn to minimal functions of higher arity. 

\begin{defn}
Let $f \colon V^2 \rightarrow V$ be a binary injective
operation. 

The \emph{dual} $f^*$
of $f$ is defined by $f^*(x,y)=-f(-x,-y)$.

We say that $f \colon V^2 \rightarrow V$ is
\begin{itemize}
\item \emph{of type $p_1$} iff for all $x_1,x_2,y_1,y_2 \in V$ with $x_1 \neq x_2$
and $y_1 \neq y_2$ we have\\
$(f(x_1,y_1),f(x_2,y_2)) \in E$ if and only if
$(x_1,x_2) \in E$;
\item \emph{of type $\max$} iff for all $x_1,x_2,y_1,y_2 \in V$ with $x_1 \neq x_2$ and $y_1 \neq y_2$ we have\\
$(f(x_1,y_1),f(x_2,y_2)) \in E$ if and only if
$(x_1,x_2) \in E$ or $(y_1,y_2) \in E$;
\item \emph{balanced in the first argument} iff for all $x_1,x_2,y \in V$
with $x_1 \neq x_2$ we have\\ $(f(x_1,y),f(x_2,y)) \in E$ if and only if $(x_1,x_2) \in E$;
\item \emph{balanced in the second argument}
iff $(x,y) \mapsto f(y,x)$ is balanced in the first argument;
\item \emph{$E$-dominated in the first argument}
iff for all $x_1,x_2,y \in V$ with $x_1 \neq x_2$ we have that $(f(x_1,y),f(x_2,y)) \in E$;
\item \emph{$E$-dominated in the second argument} iff
$(x,y) \mapsto f(y,x)$ is $E$-dominated in the first argument.
\end{itemize}
\end{defn}

We can now state our main result.
\begin{theorem}\label{thm:main}
Any minimal function on $G$ is equivalent to one of the unary operations in Theorem~\ref{thm:mainUnary}, or to exactly one of the following operations:
\begin{enumerate}
\item [(6)] a binary injection of type $p_1$ that is balanced in both arguments;
\item [(7)] a binary injection of type $\max$ that is balanced in both arguments;
\item [(8)] a binary injection of type $\max$ that is $E$-dominated in both arguments;
\item [(9)] a binary injection of type $p_1$ that is $E$-dominated in both arguments;
\item [(10)] a binary injection of type $p_1$ that is balanced in the first
and $E$-dominated in the second argument;
\end{enumerate}
or to one of the duals of the last four operations (the dual of the operation in (6) is equivalent to the operation itself).
\end{theorem}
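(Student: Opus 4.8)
Since Theorem~\ref{thm:mainUnary} disposes of the unary case, assume $k\ge 2$. My plan follows the outline sketched in the introduction. First I would fix a linear order $<$ on $V$ making $(G;<)$ the Fra\"{i}ss\'{e} limit of all finite ordered graphs, so that the Ramsey theorem for finite ordered vertex-coloured graphs applies to it, and then prove a Ramsey-type statement for operations: for every $f\colon V^k\To V$, every finite $F\subseteq V$, and the colouring that $f$ induces on configurations of $k$-tuples (colour of a configuration being the type over $G$ of its $f$-image), there are $\alpha_1,\dots,\alpha_k,\beta\in\Aut(G)$ such that $\beta\circ f\circ(\alpha_1,\dots,\alpha_k)$, restricted to $F$, has this colouring constant on each type of $(G;<)^k$. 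This is obtained by coding finitely many configurations into one finite ordered vertex-coloured structure and applying the cited Ramsey theorem, iterated over the relevant arities and types; a compactness argument over all finite $F$ then yields, in the closure of $f$ over $\Aut(G)$, a single operation $g$ that is \emph{canonical} with respect to $(G;<)$. To reach operations such as \sw\ one has to run the same argument over $(G;<,c_1,\dots,c_m)$ with finitely many named constants; so the statement to aim for is that every non-trivial $f$ generates a non-trivial operation canonical with respect to $(G;<)$ enriched by finitely many constants, while keeping control of how many constants are really needed.

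Now let $e$ be minimal. By the above it generates a non-trivial canonical operation of the same arity, which by minimality must be equivalent to $e$; so I may assume $e$ is itself canonical. The behaviour of a canonical operation in any single coordinate has only finitely many possible shapes, and the next step is to show that if $k\ge 3$ then substituting a constant for some variable --- or else identifying two variables --- produces a \emph{non-trivial} operation of arity $<k$. Since the definition of minimality forbids $e$ from generating any non-trivial operation of smaller arity, this forces $k\le 2$. For $k=2$ I would then prove that $e$ is injective: if $e$ collapsed a type of pairs of $2$-tuples all of whose coordinates are distinct, then a twisted diagonal $x\mapsto e(x,\gamma(x))$ for a suitable $\gamma\in\Aut(G)$ is a non-injective, hence non-trivial, unary operation generated by $e$, contradicting minimality; the degenerate collapsing patterns (one coordinate constant along the pair) need a separate, more delicate argument but are likewise excluded.

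It then remains to classify the canonical binary injections. Such an operation is completely determined by the value $E(e(x_1,y_1),e(x_2,y_2))\in\{E,N\}$ as a function of the order-and-graph type of the pair $((x_1,y_1),(x_2,y_2))$ in $(G;<)^2$, including the degenerate cases $x_1=x_2$ and $y_1=y_2$ --- finitely much data, restricted by the symmetry of $E$ under interchanging the two $2$-tuples and by realizability in $(G;<)^2$. I would organize the enumeration by the ``generic'' part first, namely the Boolean dependence on the edge-relations of the two coordinates, showing that up to composition with $-$ and up to swapping the arguments the only admissible generic behaviours are constant (giving $e_E$ or $e_N$), the type $p_1$, and the type $\max$, every other Boolean combination being unrealizable in $(G;<)^2$; then, for each of these, one runs through the finitely many degenerate behaviours in each argument (balanced, $E$-dominated, $N$-dominated). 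For every resulting operation one exhibits explicit generation witnesses --- compositions with automorphisms that collapse or expose a prescribed configuration, or a twisted diagonal --- to conclude that $e$ either generates a non-trivial unary operation (and then Theorem~\ref{thm:mainUnary} finishes the job), or generates one of $e_E$, $e_N$, $-$, \sw, or is one of the operations (6)--(10) or a dual of one of (7)--(10). A final check that these nine binary operations are genuinely minimal and pairwise inequivalent --- the inequivalences witnessed by relations preserved by one but not another --- gives the ``exactly one'' in the statement.

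The conceptual core, that Ramsey theory forces canonicity, is clean; the difficulty lies in the two layers around it. First, the canonization has to be sharp enough to detect \sw: one must canonize over the ordered random graph \emph{with constants} and then argue that a single constant always suffices, which in the end rests on understanding why switching edges at two vertices already generates switching at one. Second --- and this is the bulk of the work --- comes the finite classification: enumerating all consistent canonical binary injections, correctly recognizing which of the a priori types are actually realizable in $(G;<)^2$, and for every surviving type either producing generation witnesses that place it among the $14$ operations or deriving a contradiction with minimality, together with the parallel argument ruling out $k\ge 3$ and the injectivity step. The real danger there is completeness of the case analysis rather than any single hard estimate.
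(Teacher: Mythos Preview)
Your broad strategy --- canonize via Ramsey over the ordered random graph, then classify --- is exactly the paper's. But two substantial layers are either missing or handled differently, and one claim is wrong.

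\textbf{Injectivity and arity reduction.} The paper does not canonize first and then reduce arity. It shows upfront (Lemma~\ref{lem:en}) that a minimal essential operation must preserve $E$ and $N$ --- via exactly the twisted-diagonal argument you sketch --- and then proves (Theorem~\ref{thm:bin-inj}) that any essential operation preserving $E$ and $N$ generates a binary injection. The arity reduction (Lemma~\ref{lem:binaryPoly}) is not ``substitute a constant or identify two variables'': it is $g(x,y)=f(x,y,\alpha_3(y),\dots,\alpha_k(y))$ for carefully chosen automorphisms $\alpha_i$, with a case split. More importantly, the injectivity step is done via a relational criterion (Lemma~\ref{lem:intersect}: a reduct with $\neq$ pp-definable has a binary injective polymorphism iff its pp-definable relations are intersection-closed), followed by a lengthy three-case verification. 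Your ``degenerate collapsing patterns need a separate, more delicate argument but are likewise excluded'' is exactly where this machinery does the work, and it is not at all clear how to replace it by a direct combinatorial argument; this is a genuine gap in your outline.

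\textbf{Mixed order-types.} Your classification misses an entire layer. A canonical injection over $(G;\prec)^2$ may behave one way on ``straight'' input $(\prec,\prec)$ and another way on ``twisted'' input $(\prec,\succ)$ --- this is the whole point of the order. The paper spends several lemmas (around Proposition~\ref{prop:essentialMinimalIndependentOfOrder}) showing that any mixed type $X/Y$ with $X\neq Y$ is not minimal, by explicitly generating an unmixed function from it (e.g.\ from type $\max/p_i$ one builds $g(u,v)=f(f(u,v),f(u,\alpha(v)))$ of type $\max/\max$). Your assertion that ``every other Boolean combination [is] unrealizable in $(G;<)^2$'' is incorrect: mixed types are perfectly realizable as canonical injections, they are just not minimal. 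Once $E$- and $N$-preservation is known, the generic behaviour is automatically idempotent, leaving only $p_1,p_2,\min,\max$ --- that is the correct reason for the short list, not unrealizability.

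Finally, the constants are only needed in the unary analysis (to detect $\sw$); for $k\ge 2$ the paper canonizes over $(G;\prec)$ without constants, and your proposal never uses them again after introducing them.
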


\subsection{Results on groups and monoids}
The technique to show this result can be applied several times to
unary bijective operations in order to rederive a result by Simon Thomas (Theorem~\ref{thm:reducts} below).
%A monoid $\M$ of operations from a set $D$ to $D$ is called
%\emph{(locally) closed} iff the following holds: whenever $f \colon D
%\rightarrow D$ is such that for every finite $A \subseteq D$ there
%exists $e\in\M$ such that $e(x)=f(x)$ for all $x \in A$, then $f$ is
%an element of $\M$. Equivalently, the monoid is a closed set in
%the product topology of $D^D$, where $D$ is taken to be discrete.
%For the purposes of this paper, we call the smallest closed
%transformation monoid that contains a set of operations $\cal F$ from $V$ to $V$ and the automorphism group $\Aut(G)$ of the random graph the \emph{monoid generated by $\cal F$}.
A permutation group $\G$ acting on a set $D$ is called
 \emph{(locally) closed} iff it is closed in the space of all permutations on $D$ equipped with the pointwise convergence topology; equivalently, $\G$ contains
all permutations which can be interpolated by elements of $\G$ on
arbitrary finite subsets of $D$. We call the smallest closed group
containing a set of permutations $\cal F$ on $V$ as well as $\Aut(G)$ the \emph{group generated by $\cal F$}.

\begin{theorem}[from~\cite{RandomReducts}]\label{thm:reducts}
The closed permutation permutation groups containing $\Aut(G)$ are precisely the following.
\begin{enumerate}
\item $\Aut(G)$;
\item the group generated by $\{\flip\}$;
\item the group generated by $\{\sw\}$;
\item the group generated by $\{\flip,\sw\}$;
\item the group of all permutations on $V$.
\end{enumerate}
\end{theorem}

The arguments given in~\cite{RandomReducts} use a Ramsey-theoretic
result by Ne\v{s}et\v{r}il~\cite{NesRod:Partitions}, namely that the
class of all finite graphs excluding finite cliques of a fixed size
forms a \emph{Ramsey class} (in the sense of~\cite{NesetrilSurvey}).
We also use a Ramsey-theoretic result, shown by R\"odl and
Ne\v{s}et\v{r}il~\cite{NesetrilRoedlPartite,NesetrilRoedlOrderedStructures}
(and independently by~\cite{AbramsonHarrington}), which is
different: we need the fact that finite ordered vertex-colored
graphs form a Ramsey class.

Similarly to groups and clones, a monoid $\M$ of operations from a set $D$ to $D$ is called
\emph{(locally) closed} iff it is closed in the space $D^D$ equipped with the pointwise convergence topology.
Our proof moreover shows the following statement about closed transformation
monoids that contain Aut$(G)$;
this statement
also follows from another combinatorial proof
of Simon Thomas given in~\cite{Thomas96} (where he uses the notion of \emph{pseudo-reducts} instead of
a formulation in terms of closed monoids). 

\begin{theorem}\label{thm:endos}
For any closed monoid $\M$ containing $\Aut(G)$, one of the
following cases applies.
\begin{enumerate}
\item $\M$ contains a constant operation.
\item $\M$ contains $e_E$.
\item $\M$ contains $e_N$.
\item The permutations in $\M$ form a group which is a dense subset of $\M$ in the space $V^V$.
\end{enumerate}
\end{theorem}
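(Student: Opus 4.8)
The plan is to establish the following sharper statement, which contains Theorem~\ref{thm:endos}: \emph{if a closed monoid $\M \supseteq \Aut(G)$ contains neither a constant operation nor $e_E$ nor $e_N$, then for every $f \in \M$ and every finite $A \subseteq V$ there is a permutation $g \in \M$ with $g|_A = f|_A$}; hence $\M$ is the closure of the group of its permutations, which by Theorem~\ref{thm:reducts} is one of the five groups listed there. The engine of the proof is canonization. Using our Ramsey-type theorem for colourings of finite powers of $G$ in its unary instance --- i.e.\ for the ordered random graph expanded by finitely many constants --- I would show: for every $f \colon V \to V$ and every finite $A$ there is a self-embedding $e$ of $G$ with $A \subseteq e(V)$ such that $c := f \circ e$ is \emph{canonical}, meaning that the type of $(c(x_1), \dots, c(x_n))$ in $G$ depends only on the type of $(x_1, \dots, x_n)$ in $(V;E,<)$ expanded by constants for the elements of $A$ lying in $e(V)$. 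Since $e$ preserves $E$ it is a self-embedding of $G$ and hence lies in the closure of $\Aut(G)$; so $c$ belongs to the monoid generated by $\{f\} \cup \Aut(G)$, and therefore $c \in \M$. Writing $B := e^{-1}(A)$, we have $f|_A = (c|_B) \circ (e^{-1}|_A)$ where $e^{-1}|_A \colon A \to B$ is a partial isomorphism of $G$, so all the information about $f|_A$ is carried by $c$.

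Next I would classify the canonical unary functions, much as in the analysis underlying Theorem~\ref{thm:mainUnary}: such a $c$ is described by what it does to an edge, to a non-edge, and --- over the constants --- to the finitely many ``exceptional'' pairs involving a named point, each such behaviour being ``map onto an edge'', ``map onto a non-edge'', or ``collapse''. The collapsing cases are dispatched by the extension property: if $c$ collapses every non-edge then, since any two vertices of $G$ have a common non-neighbour, $c$ is constant; if $c$ collapses every edge then, since any two vertices have a common neighbour, $c$ is again constant. Thus a non-injective canonical $c$ is constant. If $c$ is injective, then either the image of $c$ contains an infinite clique, in which case $c$ generates $e_E$; or it contains an infinite independent set, in which case $c$ generates $e_N$; or the image of $c$ induces a copy of the random graph or of its complement, in which case $c$ is an isomorphism or an anti-isomorphism of $G$ up to a flip at the finitely many named points, and consequently $c$ restricted to any finite set extends to a permutation of $V$ that $c$ generates --- a back-and-forth argument using homogeneity of $G$ --- so that this permutation lies in $\M$.

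Combining these: fix $\M$ as in the sharper statement, let $f \in \M$, and fix a finite $A$; the canonical $c$ produced above lies in $\M$. Since $\M$ is closed under generation and contains no constant, no $e_E$, and no $e_N$, the classification forces $c$ into the last case (were $c$ a constant operation, or were $c$ to generate $e_E$ or $e_N$, then $\M$ would contain a constant, $e_E$, or $e_N$). Hence $c|_B$ extends to a permutation $\pi \in \M$, and so $f|_A = (c|_B) \circ (e^{-1}|_A)$ agrees on $A$ with the permutation $\pi \circ \alpha \in \M$, where $\alpha \in \Aut(G)$ is any automorphism extending $e^{-1}|_A$. As $A$ was arbitrary, $f$ lies in the closure of the group of permutations of $\M$, which proves the sharper statement and with it Theorem~\ref{thm:endos}.

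The hardest step, as above, is the canonization lemma in precisely this form --- in particular the requirement $A \subseteq e(V)$, which is what lets canonicity of $f \circ e$ be transferred back to $f$ on the prescribed finite set; without it, canonizing $f$ might wash out, say, a collapsed pair and tell us nothing about $f$ itself. This forces the Ramsey argument to produce a monochromatic copy \emph{containing a prescribed finite sub-copy} while also tracking finitely many marked points, and requires checking that the resulting ``type'' of $c$ is, up to the marked points, independent of $A$. A secondary point needing care is the back-and-forth argument that the permutation interpolating $f$ on $A$ can be taken inside $\M$ and not merely inside $\mathrm{Sym}(V)$; this is where one verifies, when $c$ behaves like $-$ or like $\sw$ on some copy, that the corresponding operation is actually generated by $c$.
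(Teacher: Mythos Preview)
Your high-level plan---canonize $f$ over the given finite set $A$ and then argue that the resulting canonical $c\in\M$ either generates a forbidden operation or agrees on $A$ with a permutation it generates---is a legitimate strategy, but the proposal as written has a genuine gap, and the paper takes a different route.

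The classification of injective canonical $c$ is not correct as stated. The trichotomy ``image contains an infinite clique / infinite independent set / induces the random graph or its complement'' is neither exclusive nor the right invariant: the image of the identity contains infinite cliques and infinite independent sets and is a copy of $G$, yet the identity does not generate $e_E$. What you actually need is a case analysis of the behaviour of $c$ on and between each of the (at least $2^{|A|}$, and more over the ordered expansion) parts determined by the constants. If $c$ behaves like $e_E$ or $e_N$ on some part then it generates $e_E$ or $e_N$; but the case where $c$ behaves like the identity on every part and like $e_E$, $e_N$, or $-$ only \emph{between} certain pairs of parts is subtle, and in the paper it is handled only after one already knows that $-$ or $\sw$ lies in $\M$. (Similarly, a non-injective canonical $c$ over constants need not be constant---it may collapse only one orbit---though it will generate a constant; but under your hypotheses every $f\in\M$ is injective anyway, by the lemmas feeding into Proposition~\ref{prop:endos-weak}.)

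More importantly, the step ``$c$ restricted to any finite set extends to a permutation of $V$ that $c$ generates'' is the heart of the matter, and the back-and-forth sketch does not establish it. Concretely, if $c$ is canonical over two constants, behaves like the identity on each part, and like $-$ between exactly one pair of parts, you must show that $c$ generates a permutation agreeing with $c$ on the constants; this is precisely the content of the case analyses in Propositions~\ref{prop:endos-weak}--\ref{prop:to-all-permutations}, and those arguments are not short.

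The paper avoids canonizing over all of $A$ at once and proceeds iteratively instead. Proposition~\ref{prop:endos-weak} (using at most two constants) shows that any $e\in\M$ not generated by $\Aut(G)$ generates one of the five unary minimal operations, so under your hypothesis $-$ or $\sw$ lies in $\M$. Propositions~\ref{prop:above-minus}--\ref{prop:to-all-permutations} then redo the analysis with $-$ and/or $\sw$ available as a tool, climbing the five-element lattice of closed supergroups of $\Aut(G)$ until $\M$ is seen to be generated by its permutations. Your single-shot approach over $|A|$ constants would have to compress this climb into one case analysis of canonical behaviours over arbitrarily many constants, and you have not indicated how to carry that out.
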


\section{Model-theoretic corollaries}\label{sect:model}

We now discuss a model-theoretic interpretation of these results 
as well as further model-theoretic consequences; this section can be skipped without affecting readability of the rest of the paper.

Since $G$ is homogeneous in a finite language it is $\omega$-categorical
(Corollary~6.4.2 of \cite{Hodges}). The reducts of a countable $\omega$-categorical
structure $\Gamma$ 
are $\omega$-categorical (see e.g.
\cite{Hodges}). In particular, this is true for all reducts of $G$.

We say that two structures $\Gamma$ and $\Delta$ on the same domain are \emph{first-order interdefinable} when $\Gamma$ is first-order definable in $\Delta$ and vice versa. 
Let $f \colon D^n \rightarrow D$ be an operation
and let $R \subseteq D^m$ be a relation. For tuples $r_1,\ldots,r_n\in D^m$ we write $f(r_1,\ldots,r_n)$ for the $m$-tuple that is obtained by applying $f$ to $r_1,\ldots, r_n$ componentwise, i.e., for the $m$-tuple whose $i$-th component is $f(r_1^i,\ldots,r_n^i)$, where $r_j^i$ denotes the $i$-th component of $r_j$, for $1\leq j\leq m$ and $1\leq i\leq m$. We say
that $f$ \emph{preserves} $R$ iff $f(r_1,\ldots,r_n) \in R$
whenever $r_1,\ldots ,r_n\in R$, and that $f$ \emph{violates} $R$ otherwise.
The theorem of Engeler,
Ryll-Nardzewski, and Svenonius (see
e.g.~\cite[Theorem~6.3.1]{Hodges}) implies that a relation $R$ is
first-order definable in a countable $\omega$-categorical structure $\Delta$
 if and only if $R$ is preserved by all automorphisms of $\Delta$. As a consequence, the reducts of a countable
$\omega$-categorical structure $\Delta$ are, up to first-order
interdefinability, in one-to-one correspondence with the locally
closed permutation groups containing $\Aut(\Delta)$. To illustrate
this, we restate Theorem~\ref{thm:reducts} by means of this
connection.

On the random graph, let $R^{(k)}$ be the $k$-ary relation that
holds on $x_1,\dots,x_k \in V$ iff $x_1,\dots,x_k$ are pairwise
distinct, and the number of edges between these $k$ vertices is odd.
Note that $R^{(4)}$ is preserved by $\flip$, $R^{(3)}$ is preserved by
$\sw$, and that $R^{(5)}$ is preserved by $\flip$ and by $\sw$, but not
by all permutations of $V$.

\begin{theorem}[Simon Thomas~\cite{RandomReducts}]\label{thm:reducts2}
Any reduct of $G$ is first-order interdefinable with precisely one of the following structures.
\begin{enumerate}
\item $G=(V;E)$;
\item $(V;R^{(4)})$;
\item $(V;R^{(3)})$;
\item $(V;R^{(5)})$;
\item $(V;=)$.
\end{enumerate}
\end{theorem}

For any reduct $\Gamma$ of $G$, a case of Theorem~\ref{thm:reducts2}
applies iff the case with the same number applies for $\Aut(\Gamma)$
in Theorem~\ref{thm:reducts}. We will not prove this relational
description in this paper; however, given Theorem~\ref{thm:reducts}
and the discussion above, verifying the equivalence is merely an
exercise.

In the same way as automorphisms of a countable $\omega$-categorical structure $\Delta$ can be used to characterize
first-order definability in $\Delta$, self-embeddings of $\Delta$ (that is, embeddings of $\Delta$ into itself) can be used to
characterize existential definability, endomorphisms of $\Delta$ can be used
to characterize existential positive definability, and polymorphisms of $\Delta$ (i.e., homomorphisms from a finite power $\Delta^n$ to $\Delta$, or simply finitary operations preserving all relations of $\Delta$) can be used to characterize primitive positive definability in
$\Delta$.

A first-order formula $\phi$ is called \emph{existential}
iff it is of the form $\exists x_1,\dots,x_k. \; \psi$, where $\psi$ is
quantifier-free. If $\psi$ is even of the form $\psi_1 \wedge \cdots \wedge \psi_m$ for atomic formulas $\psi_1,\dots,\psi_m$,
then $\phi$ is called \emph{primitive positive}. 
A formula is called \emph{existential positive}
iff it is a disjunction of primitive positive formulas. 
%A first-order formula is
%called \emph{primitive positive} (\emph{existential positive})
%iff it is existential and does not contain
%negations and disjunctions (negations, respectively).
Call two structures $\Gamma$ and $\Delta$ \emph{primitive positive interdefinable} iff every relation in
$\Gamma$ has a definition by a primitive positive formula in $\Delta$ and vice versa; we have analogous definitions for existential positive and existential interdefinability.
To translate results about operations on $G$
into results about primitive positive definability in reducts of $G$, the following theorem is central.

\begin{theorem}[from~\cite{BodirskyNesetrilJLC}]\label{thm:nesetril}
Let $\Gamma$ be a countable $\omega$-categorical structure. Then a relation $R$ is primitive positive definable in $\Gamma$ if and only if
$R$ is preserved by all polymorphisms of $\Gamma$.
\end{theorem}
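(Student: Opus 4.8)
The plan is to prove the two implications separately. The implication ``primitive positive definable $\Rightarrow$ preserved by all polymorphisms'' is the routine one: by induction on the shape of a primitive positive formula $\phi(x_1,\dots,x_m)$ one checks that $\phi^\Gamma$ is preserved by each polymorphism $f$ of $\Gamma$. Atomic formulas define relations of $\Gamma$ (or the equality relation, which every operation preserves); conjunction corresponds to intersection of preserved relations after padding coordinates, and is again preserved; and existential quantification corresponds to a coordinate projection, which is preserved since one may lift tuples of the projection to the full relation, apply $f$ coordinatewise, and project again. None of this uses $\omega$-categoricity.

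For the converse, let $R\subseteq D^m$ be preserved by $\operatorname{Pol}(\Gamma)$. Since $\Aut(\Gamma)\subseteq\operatorname{Pol}(\Gamma)$, the relation $R$ is preserved by all automorphisms, so by the theorem of Engeler, Ryll-Nardzewski and Svenonius it is first-order definable in $\Gamma$; by $\omega$-categoricity (Ryll-Nardzewski's theorem) it is therefore a union of finitely many orbits of $m$-tuples, and there are only finitely many such orbits altogether. Choose representatives $c^1,\dots,c^p\in R$ of the orbits contained in $R$, and for $1\le j\le m$ put $\rho_j:=(c^1_j,\dots,c^p_j)\in D^p$. I claim that $R$ equals $S:=\{(h(\rho_1),\dots,h(\rho_m)) : h\in\operatorname{Pol}^{(p)}(\Gamma)\}$, where $\Gamma^p$ denotes the $p$-th direct power (domain $D^p$, relations lifted coordinatewise) and $\operatorname{Pol}^{(p)}(\Gamma)$ the set of $p$-ary polymorphisms, i.e.\ homomorphisms from $\Gamma^p$ to $\Gamma$. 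Indeed $S\subseteq R$ because each $c^i\in R$ and polymorphisms preserve $R$; and $R\subseteq S$ because every $c\in R$ equals $(\alpha(c^i_1),\dots,\alpha(c^i_m))$ for some automorphism $\alpha$ and some $i$, so that $h:=\alpha\circ\pi_i$, with $\pi_i$ the $i$-th projection $\Gamma^p\to\Gamma$, is a $p$-ary polymorphism with $(h(\rho_1),\dots,h(\rho_m))=c$.

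Next I would realize $S$ as an intersection of primitive positive definable relations and then collapse that intersection to a finite one. For each finite induced substructure $\B$ of $\Gamma^p$ that contains $\rho_1,\dots,\rho_m$, let $q_\B(x_1,\dots,x_m)$ be its canonical conjunctive query: one variable per element, with $x_j$ naming $\rho_j$, one conjunct for each atomic fact of $\B$, and all variables other than $x_1,\dots,x_m$ existentially quantified; this is a primitive positive formula (I assume a finite relational signature, as for the random graph; in general one restricts to the relations that occur). Restricting any $h\in\operatorname{Pol}^{(p)}(\Gamma)$ to $\B$ gives a homomorphism $\B\to\Gamma$, whence $S\subseteq q_\B^\Gamma$ for every such $\B$. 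Conversely, if $d=(d_1,\dots,d_m)\in q_\B^\Gamma$ for all finite $\B$, then the partial map $\rho_j\mapsto d_j$ extends to a homomorphism from every finite induced substructure of $\Gamma^p$ into $\Gamma$; using that $\Gamma$, being $\omega$-categorical, is $\omega$-saturated, one extends this partial map element by element along an enumeration of $D^p$ to a total homomorphism $h\colon\Gamma^p\to\Gamma$, i.e.\ $h\in\operatorname{Pol}^{(p)}(\Gamma)$, so $d\in S$. Hence $R=S=\bigcap_\B q_\B^\Gamma$. Since each $q_\B^\Gamma$ is first-order definable it is a union of orbits of $m$-tuples, and as there are only finitely many such orbits the intersection is already attained by finitely many of the $\B$, say $q_{\B_1}^\Gamma\cap\dots\cap q_{\B_N}^\Gamma=R$; then the primitive positive formula $q_{\B_1}\wedge\dots\wedge q_{\B_N}$, with existential variables renamed apart, defines $R$.

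The step I expect to be the main obstacle is this passage from compatible finite partial homomorphisms to an honest finitary polymorphism $h\in\operatorname{Pol}^{(p)}(\Gamma)$. When building $h$ one element of $D^p$ at a time, one must check at each stage that the set of admissible images of the next element --- a type over the finitely many values already committed to --- is finitely satisfiable; this is exactly where one uses that the partial map constructed so far still extends to homomorphisms on all finite substructures, and the type is then realized because $\Gamma$ is $\omega$-saturated. One degenerate case must be dispatched first: if $\rho_j=\rho_{j'}$ while $d_j\ne d_{j'}$ the partial map $\rho_j \mapsto d_j$ is ill-defined, but then already $d\notin q_{\B_0}^\Gamma$ for $\B_0$ the substructure of $\Gamma^p$ on $\{\rho_1,\dots,\rho_m\}$, so such a $d$ is excluded in any case.
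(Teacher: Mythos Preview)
The paper does not supply its own proof of this theorem; it is quoted from~\cite{BodirskyNesetrilJLC} and used as a black box, so there is nothing in the present paper to compare your argument against.

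That said, your proposal is correct and follows the standard route. The only place worth tightening is the extension step where you invoke $\omega$-saturation. When you say that ``the set of admissible images of the next element'' is a type over the values already committed, you should make explicit why this set is first-order definable over those parameters: it is invariant under all automorphisms fixing $d_0,\dots,d_{n-1}$, and in an $\omega$-categorical structure every such invariant set is definable by Ryll--Nardzewski. Finite satisfiability then holds because any finite collection of these conditions is already witnessed by a single sufficiently large finite $\B$, and one must note that the chosen realisation $v$ again satisfies the full invariant (extendability to all finite $\B$), so the induction continues. Alternatively, you can bypass $\omega$-saturation altogether and run a direct K\"onig's-lemma argument, using Ryll--Nardzewski to see that the tree of partial homomorphisms modulo automorphisms is finitely branching; the paper itself carries out exactly this K\"onig argument in its proof of the implication $(2)\Rightarrow(3)$ of Lemma~\ref{lem:intersect}, which is the closest analogue in the text.
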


The operational generating
process can be linked to preservation of relations of structures~\cite{BodirskyNesetrilJLC,OligoClone,BodChenPinsker}.

\begin{proposition}\label{prop:galois}
Let $f \colon V^k \rightarrow V$ and $g \colon  V^l \rightarrow V$ be operations.
Then $f$ generates $g$ if and only if every relation with a first-order definition in $G$ that is preserved by $g$ is also preserved by $f$.
\end{proposition}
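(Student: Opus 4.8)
The plan is to derive the proposition from the Galois connection between term generation over $\Aut(G)$ and invariance, with the $\omega$-categoricity of $G$ supplying the only non-formal ingredient. Write $\C$ for the clone of all operations on $V$ obtainable from $f$, the automorphisms of $G$, and the projections by composition; by the definition of generation recalled above, ``$f$ generates $g$'' means precisely that $g$ lies in the local closure of $\C$, i.e.\ that for every finite $A\subseteq V^l$ some operation in $\C$ agrees with $g$ on $A$. Throughout I would use the theorem of Engeler, Ryll-Nardzewski and Svenonius in the form already recalled: a relation on $V$ is first-order definable in $G$ if and only if it is preserved by every automorphism of $G$ --- the forward implication being trivial, and the converse being where $\omega$-categoricity of $G$ enters.

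For the first direction, assume $f$ generates $g$, fix a relation $R\subseteq V^m$ that is first-order definable in $G$ and preserved by $f$, and take arbitrary $r_1,\dots,r_l\in R$. Then $R$ is preserved by $f$, by $\Aut(G)$, and by all projections; since preservation of a fixed relation is inherited under composition, an induction on term structure shows that every operation in $\C$ preserves $R$. Let $A\subseteq V^l$ be the finite set of ``columns'' $(r_1[j],\dots,r_l[j])$ with $j\le m$, and pick $h\in\C$ agreeing with $g$ on $A$. Then $g(r_1,\dots,r_l)=h(r_1,\dots,r_l)$ coordinatewise, and this tuple lies in $R$ because $h$ preserves $R$. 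Hence $g$ preserves $R$.

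For the converse, assume that every relation first-order definable in $G$ that is preserved by $f$ is preserved by $g$, fix a finite $A=\{a_1,\dots,a_N\}\subseteq V^l$, and produce an $h\in\C$ agreeing with $g$ on $A$. The key step is to form the $N$-ary relation
$$R:=\{(h(a_1),\dots,h(a_N)) \;:\; h\in\C\ \text{of arity}\ l\}\subseteq V^N.$$
Because $\C$ is a clone --- closed under composition, containing $\Aut(G)$ and the projections --- the relation $R$ is preserved by every operation of $\C$, in particular by $f$; and being $\Aut(G)$-invariant it is first-order definable in $G$, so by hypothesis $g$ preserves $R$. For each $i\le l$ the $l$-ary coordinate projection belongs to $\C$, so the tuple $d_i:=(a_1[i],\dots,a_N[i])$ lies in $R$. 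Applying $g$ coordinatewise to $d_1,\dots,d_l\in R$ gives $(g(a_1),\dots,g(a_N))=g(d_1,\dots,d_l)\in R$, and unwinding the definition of $R$ produces $h\in\C$ with $h(a_j)=g(a_j)$ for every $j\le N$, i.e.\ $h$ agrees with $g$ on $A$. Since $A$ was arbitrary, $f$ generates $g$.

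The only step that is more than bookkeeping is the construction of $R$ in the converse, together with the appeal to $\omega$-categoricity there: one must compress ``all tuples reachable from $A$ by terms over $\{f\}\cup\Aut(G)$'' into a single finitary relation, observe that it is automatically $\Aut(G)$-invariant, and invoke Ryll-Nardzewski to convert that invariance into a first-order definition in $G$, so that the hypothesis becomes applicable. Propagation of invariance through composition, the fact that projections preserve every relation, and the reduction of ``$h$ preserves $R$'' to agreement on the finitely many columns in question are all routine; this is precisely the reasoning behind the results of \cite{BodirskyNesetrilJLC,OligoClone,BodChenPinsker} cited above.
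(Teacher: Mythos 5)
Your proof is correct; the paper itself does not prove Proposition~\ref{prop:galois} but only cites~\cite{BodirskyNesetrilJLC,OligoClone,BodChenPinsker}, and your argument is precisely the standard Galois-connection proof from that literature: term operations over $\{f\}\cup\Aut(G)$ preserve every $f$- and $\Aut(G)$-invariant relation for one direction, and for the converse the ``orbit of $A$ under $l$-ary members of the clone'' relation $R$, which is $\Aut(G)$-invariant and hence first-order definable in $G$ by the Engeler--Ryll-Nardzewski--Svenonius theorem, exactly where $\omega$-categoricity is needed. Both directions, including the column/projection bookkeeping, check out.
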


Recall from Subsection~\ref{subsect:intro:model} that the reducts of $G$, factored by the equivalence of primitive positive interdefinability, form a complete lattice in which is the order is given by primitive positive definability. 
Using Theorem~\ref{thm:nesetril} and Proposition~\ref{prop:galois},  we obtain the following
equivalent formulation of Theorem~\ref{thm:main}.
 The \emph{dual} of $\Gamma$ is the structure
that consists of the relations $-R := \{(-t_1,\dots,-t_k) \; | \; (t_1,\dots,t_k) \in R\}$ for all relations $R$ in $\Gamma$.

\begin{cor}\label{thm:mainForStructures}
Let $\Gamma$ be a member of a dual atom of the lattice of reducts of $G$ up to primitive positive interdefinability.
Then it is primitive positive interdefinable  with exactly one of the following 14 structures, namely the structures with all relations that are first-order definable in $G$ and preserved 
by \begin{enumerate}
\item a constant operation;
\item $e_N$;
\item $e_E$;
\item $\flip$;
\item $\sw$;
\item a binary operation of type $p_1$ that is balanced in both arguments;
\item a binary operation of type $\max$ that is balanced in both arguments;
\item a binary operation of type $\max$ that is $E$-dominated in both arguments;
\item a binary operation of type $p_1$ that is $E$-dominated in both arguments;
\item a binary operation of type $p_1$ that is balanced in the first
and $E$-dominated in the second argument;
\end{enumerate}
or to one of the duals of the last four structures.
\end{cor}

Existential positive and existential definability
in a countable $\omega$-categorical structure $\Gamma$ can be described in terms of the endomorphism monoid of $\Gamma$.
\begin{proposition}\label{thm:pres}
A relation $R$ has an existential positive (existential) definition
in a countable $\omega$-categorical structure $\Gamma$ if and only if $R$ is
preserved by the endomorphisms (self-embeddings) of $\Gamma$.
\end{proposition}
\begin{proof}
It is easy to verify that existential positive formulas are
preserved by endomorphisms, and existential formulas are preserved
by self-embeddings of $\Gamma$.

For the other direction, note that the endomorphisms and
self-embeddings of $\Gamma$ contain the automorphisms of $\Gamma$,
and hence the theorem of Ryll-Nardzewski shows that $R$ has a
first-order definition in $\Gamma$; let $\phi$ be a formula defining
$R$. Suppose for contradiction that $R$ is preserved by all
endomorphisms of $\Gamma$ but has no existential positive definition
in $\Gamma$. We use the homomorphism preservation theorem (see
\cite[Section 5.5, Exercise~2]{Hodges}),
 which states that a
first-order formula $\phi$ is equivalent to an existential positive
formula modulo a first-order theory $T$ if and only if $\phi$ is
preserved by all homomorphisms between models of $T$. Since by
assumption $\phi$ is not equivalent to an existential positive
formula in $\Gamma$, there are models $\Gamma_1$ and $\Gamma_2$ of
the first-order theory of $\Gamma$ and a homomorphism $h$ from
$\Gamma_1$ to $\Gamma_2$ that violates $\phi$. By the Theorem of
L\"owenheim-Skolem (see e.g.~\cite{Hodges}) the first-order theory
of the two-sorted structure $(\Gamma_1,\Gamma_2;h)$ has a countable
model $(\Gamma_1',\Gamma_2';h')$. Since both $\Gamma_1'$ and
$\Gamma_2'$ must be countably infinite, and because $\Gamma$ is
$\omega$-categorical, we have that $\Gamma_1'$ and $\Gamma_2'$ are
isomorphic to $\Gamma$, and $h'$ can be seen as an endomorphism of
$\Gamma$ that violates $\phi$; a contradiction.

The argument for existential definitions and self-embeddings is
similar, but instead of the homomorphism preservation theorem we use
the Theorem of {\L}os-Tarski which states that a first-order formula
$\phi$ is equivalent to an existential formula modulo a first-order
theory $T$ if and only if $\phi$ is preserved by all embeddings
between models of $T$ (see e.g.~\cite[Corollary 5.4.5]{Hodges}).
\end{proof}

Using Proposition~\ref{thm:pres}, we obtain an interesting and perhaps
surprising consequence of Theorem~\ref{thm:endos}. A theory $T$ is called
\emph{model-complete} iff every embedding between models of $T$ is
elementary, i.e., preserves all first-order formulas. It is
well-known that a theory $T$ is model-complete if and only if every
first-order formula is modulo $T$ equivalent to an existential
formula (see~\cite[Theorem 7.3.1]{Hodges}). A structure is said to
be model-complete iff its first-order theory is model-complete. From
the definition of model-completeness and $\omega$-categoricity it is
easy to see that a countable $\omega$-categorical structure $\Gamma$ is
model-complete iff all self-embeddings of $\Gamma$ preserve
all first-order formulas. We write $\Emb(\Gamma)$ for the monoid of all self-embeddings of $\Gamma$. 

\begin{lem}\label{lem:mc}
A countable $\omega$-categorical structure $\Gamma$ is model-complete if and
only if $\Aut(\Gamma)$ is dense in $\Emb(\Gamma)$. 
%all self-embeddings of $\Gamma$ are in the topological closure of $\Aut(\Gamma)$ in the space of unary functions on $\Gamma$.
\end{lem}
\begin{proof}
First assume that all self-embeddings of $\Gamma$ are in the topological closure of $\Aut(\Gamma)$. Let $\phi$ be a first-order formula. By the equivalent
characterization of model-completeness mentioned above it suffices
to show that $\phi$ is equivalent to an existential formula. Since
$\phi$ is preserved by automorphisms of $\Gamma$, it is also preserved by self-embeddings of
$\Gamma$. Then Proposition~\ref{thm:pres} implies that $\phi$ is
equivalent to an existential formula.

Conversely, suppose that all
first-order formulas are equivalent to an existential formula in
$\Gamma$. Since existential formulas are preserved by
self-embeddings of $\Gamma$, also the first-order formulas are
preserved by self-embeddings of $\Gamma$.
Then the theorem of Engeler, Ryll-Nardzewski, and Svenonius shows that
every relation that is preserved by all automorphisms of $\Gamma$
is also preserved by the self-embeddings of $\Gamma$. Now if there were a self-embedding $e$ not in the closure of $\Aut(\Gamma)$, then
there would be a finite tuple $t$ in $\Gamma$ such that $e(t)\neq\alpha(t)$ for all $\alpha\in\Aut(\Gamma)$. Let $R:=\{\alpha(t) \; | \; \alpha\in\Aut(\Gamma)\}$ . Then $R$ is preserved by all automorphisms of $\Gamma$ but not by $e$, a contradiction.
\end{proof}

It follows from a result
in~\cite[Proposition 19]{tcsps-journal} (based on a proof of a result by
Cameron~\cite{Cameron5} from~\cite{JunkerZiegler}) that all reducts
of the linear order of the rationals $(\mathbb Q;<)$ are
model-complete. We now see that the same is true for the random
graph. Recall that the homogeneity of $G$ implies that it has \emph{quantifier-elimination}:
every first-order formula is in $G$ equivalent to a quantifier-free
first-order formula.

\begin{cor}\label{cor:mc}
All reducts of the random graph are model-complete.
\end{cor}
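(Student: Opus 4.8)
The plan is to invoke Lemma~\ref{lem:mc}: it reduces the statement to showing that, for every reduct $\Gamma$ of the random graph, each self-embedding of $\Gamma$ lies in the topological closure of $\Aut(\Gamma)$ in the space of unary functions on $V$. So I would fix a reduct $\Gamma$ and let $\M$ be the monoid of all self-embeddings of $\Gamma$. The first step is to observe that $\M$ is a \emph{closed} transformation monoid containing $\Aut(G)$: it is closed under composition and contains the identity and $\Aut(G)$ (every automorphism of $G$ is an automorphism, hence a self-embedding, of the reduct $\Gamma$), and it is locally closed because for a unary map $f$ the conditions ``$f$ is injective'' and ``$f$ preserves $R$ together with its complement'' (for each relation $R$ of $\Gamma$) are already determined by the restrictions of $f$ to finite subsets of $V$. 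Thus Theorem~\ref{thm:endos} applies to $\M$.

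Next I would run through the four cases of Theorem~\ref{thm:endos}. The constant case is impossible, since self-embeddings are injective and $V$ is infinite. If $\M$ is generated by the closed group of its permutations, the conclusion is immediate: a bijective self-embedding of $\Gamma$ is an isomorphism, hence an automorphism of $\Gamma$, so the group of permutations contained in $\M$ is precisely $\Aut(\Gamma)$, and therefore $\M$ is the closure of $\Aut(\Gamma)$. The remaining possibilities are that $\M$ contains $e_E$ or $e_N$; here I would use that $G$ has quantifier elimination, so that every relation $R$ of $\Gamma$ is defined in $G$ by a Boolean combination of atomic formulas $E(x_i,x_j)$ and $x_i=x_j$. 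Assuming $e_E\in\M$ (the argument for $e_N$ being symmetric), take any two tuples $\bar a,\bar b\in V^k$ with the same equality type. Since $e_E$ is injective with image inducing a clique, $e_E(\bar a)$ and $e_E(\bar b)$ share that equality type and in addition have all pairs of distinct coordinates adjacent, hence realize the same quantifier-free type in $G$, so $e_E(\bar a)\in R \Leftrightarrow e_E(\bar b)\in R$ for every relation $R$ of $\Gamma$; combined with the fact that the self-embedding $e_E$ preserves $R$ and its complement, this gives $\bar a\in R\Leftrightarrow\bar b\in R$. Consequently every relation of $\Gamma$ is determined by the equality type of its argument, so $\Gamma$ is first-order interdefinable with $(V;=)$, $\Aut(\Gamma)$ is the full symmetric group on $V$, and $\M$ equals the set of all injective self-maps of $V$, which is exactly the closure of the full symmetric group in the space of unary functions (every finite injective partial map on the infinite set $V$ extends to a permutation). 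In all cases $\M\subseteq\overline{\Aut(\Gamma)}$, so by Lemma~\ref{lem:mc} every reduct $\Gamma$ is model-complete.

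I expect the main obstacle to be organizational rather than deep: checking that $\M$ is locally closed so that Theorem~\ref{thm:endos} genuinely applies, and carrying out the equality-type bookkeeping in the $e_E$/$e_N$ cases. The underlying phenomenon there is simple — a relation of the random graph that truly depends on the edge relation cannot be preserved together with its complement by a map that collapses every tuple into a clique or into an independent set — so once Theorem~\ref{thm:endos} is available the argument is short.
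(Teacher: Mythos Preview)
Your proposal is correct and follows essentially the same route as the paper: reduce via Lemma~\ref{lem:mc}, apply Theorem~\ref{thm:endos} to the closed monoid of self-embeddings, dismiss the constant case by injectivity, conclude immediately in case~(4), and in the $e_E$/$e_N$ cases use quantifier elimination to see that every relation of $\Gamma$ depends only on the equality type of its argument, so that $\Aut(\Gamma)$ is the full symmetric group and its closure is the set of all injections. The only cosmetic difference is that the paper phrases the $e_N$ case syntactically (replace $E$ by \emph{false} in the quantifier-free defining formula) whereas you argue semantically via equality types; the content is the same.
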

\begin{proof}
Let $\Gamma$ be a reduct. We apply Theorem~\ref{thm:endos} to $\Emb(\Gamma)$. If Case (4) of the
theorem holds, then we are done by Lemma~\ref{lem:mc}. Note that $\Emb(\Gamma)$ cannot contain a constant operation as all its operations are injective. So suppose
that $\Emb(\Gamma)$ contains $e_N$ (the argument for $e_E$ is analogous). Let $R$ be any relation of $\Gamma$, and $\phi_R$ be its defining quantifier-free formula. Let $\psi_R$ be the formula obtained by replacing all occurrences of $E$ by \emph{false}; so $\psi_R$ is a formula over the empty language. Then a tuple $a$ satisfies $\phi_R$ in $G$ iff $e_N(a)$ satisfies $\phi_R$ in $G$ (because $e_N$ is an embedding) iff $e_N(a)$ satisfies $\psi_R$ in $G$ (as there are no edges on $e_N(a)$) iff $e_N(a)$ satisfies $\psi_R$ in the substructure induced by $e_N[V]$ (since $\psi_R$ does not contain any quantifiers). Thus, $\Gamma$ is isomorphic to the structure on $e_N[V]$ which has the relations defined by the formulas $\psi_R$; hence, $\Gamma$ is isomorphic to a structure with a first-order definition over the empty language. This structure has, of course, all injections as self-embeddings, and all permutations as automorphisms, and hence is model-complete by Lemma~\ref{lem:mc}; thus, the same is true for $\Gamma$.
\end{proof}

\ignore{
\begin{proof}
By Lemma~\ref{lem:mc} it suffices to
show that the self-embeddings of $\Gamma$ are generated by
its automorphisms. Note that when we expand $\Gamma$ by $\neq$ and
by $\neg R$ for every relation in $\Gamma$, then the resulting
structure has the same set of self-embeddings.
Hence, we assume in
the following that $\Gamma$ contains $\neq$ and $\neg R$ for all
relations $R$, and hence that all endomorphisms of $\Gamma$ are
embeddings. We apply Theorem~\ref{thm:endos}. If Case (4) of the
theorem holds, we are done. Note that $\Gamma$ cannot have a
constant endomorphism since $\Gamma$ contains $\neq$. So suppose
that $\Gamma$ is preserved by $e_N$ (the argument for $e_E$ is analogous). Let $R$ be any relation of $\Gamma$, and $\phi_R$ be its defining quantifier-free formula. Let $\psi_R$ be the formula obtained by replacing all occurrences of $E$ by \emph{false}; so $\psi_R$ is a formula over the empty language. Then a tuple $a$ satisfies $\phi_R$ in $G$ iff $e(a)$ satisfies $\phi_R$ in $G$ (because $e$ is an embedding) iff $e(a)$ satisfies $\psi_R$ in $G$ (as there are no edges on $e(a)$) iff $e(a)$ satisfies $\psi_R$ in the substructure induced by $e[V]$ (since $\psi_R$ does not contain any quantifiers). Thus, $\Gamma$ is isomorphic to the structure on $e[V]$ which has the relations defined by the formulas $\psi_R$; hence, $\Gamma$ is isomorphic to a structure with a first-order definition over the empty language. This structure has, of course, all injections as self-embeddings, and all permutations as automorphisms, and hence is model-complete; thus, the same is true for $\Gamma$.
\end{proof}
}

Although $G$ has quantifier-elimination, the same is not true for its reducts. For example, any
two 2-element substructures of the structure
$$\Gamma = (V; \{(x,y,z) \; | \; (x,y) \in E \wedge (y,z) \in N\})$$
are isomorphic. But since there is a first-order definition of $G$
in $\Gamma$, an isomorphism between a 2-element substructure with an
edge and a 2-element substructure without an edge cannot be extended
to an automorphism of $\Gamma$. However, our results imply that a
structure $\Gamma$ with a first-order definition in the random graph
is homogeneous when $\Gamma$ is expanded by all relations with an
existential definition in $\Gamma$.

\ignore{
\begin{cor}\label{cor:exdefn}
Every reduct $\Gamma$ of the random graph has quantifier-elimination if it is expanded by all relations
with an existential definition in $\Gamma$.
\end{cor}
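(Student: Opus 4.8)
The plan is to derive this as a short consequence of the model-completeness of reducts of $G$ established in Corollary~\ref{cor:mc}. Write $\Gamma^+$ for the expansion of $\Gamma$ by all relations having an existential definition in $\Gamma$; concretely, $\Gamma^+$ carries a relation symbol $R_\psi$ for every existential formula $\psi$ (over the signature of $\Gamma$), interpreted by the relation that $\psi$ defines in $\Gamma$. First I would record that, since each of these relations is first-order (indeed existentially) definable in $\Gamma$, the structures $\Gamma$ and $\Gamma^+$ are first-order interdefinable; in particular a relation is first-order definable in $\Gamma^+$ if and only if it is first-order definable in $\Gamma$. (As a sanity check, $\Gamma^+$ is again a reduct of $G$, so it is $\omega$-categorical.)

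The key step is the observation that \emph{every} relation first-order definable in $\Gamma^+$ already appears among the relation symbols of $\Gamma^+$. Indeed, let $R$ be such a relation. By the interdefinability just noted, $R$ is first-order definable in $\Gamma$; since $\Gamma$ is model-complete by Corollary~\ref{cor:mc}, $R$ is definable in $\Gamma$ by some existential formula $\psi$, and then $R$ is precisely the interpretation of the symbol $R_\psi$ in $\Gamma^+$.

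Quantifier elimination for $\Gamma^+$ is then immediate. Given any first-order formula $\phi(x_1,\dots,x_k)$ over the signature of $\Gamma^+$, replace each occurrence of a symbol $R_\psi$ by the formula $\psi$ (renaming bound variables to avoid capture); this yields a first-order formula over the signature of $\Gamma$ defining in $\Gamma$ the same $k$-ary relation $R$ that $\phi$ defines in $\Gamma^+$. By the key observation there is a symbol $R_\psi$ of $\Gamma^+$ with interpretation $R$, and hence the atomic --- thus quantifier-free --- formula $R_\psi(x_1,\dots,x_k)$ is equivalent to $\phi$ in $\Gamma^+$. So $\Gamma^+$ has quantifier elimination; phrased via the equivalence of quantifier elimination and homogeneity for $\omega$-categorical structures (Theorem~2.22 of~\cite{Oligo}), this says exactly that $\Gamma$ becomes homogeneous after being expanded by its existentially definable relations.

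I expect there to be essentially no serious obstacle here: all the mathematical content has been pushed into Corollary~\ref{cor:mc}. The only point that needs a moment of care is the bookkeeping that ``equivalent modulo $\operatorname{Th}(\Gamma)$ to an existential formula'' --- which is what model-completeness delivers, also for formulas with free variables --- is literally the same notion as ``having an existential definition in $\Gamma$'', which is what we adjoined to form $\Gamma^+$; once this identification is made, the argument is purely formal.
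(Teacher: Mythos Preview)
Your argument is correct and follows essentially the same route as the paper's proof, which simply notes that the result is immediate from the model-completeness of $\Gamma$ (Corollary~\ref{cor:mc}) together with the standard fact that in a model-complete theory every first-order formula is equivalent to an existential one. Your write-up just unpacks this inference in more detail.
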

\begin{proof}
 This follows directly from the model-completeness of $\Gamma$ and the fact mentioned above that in
model-complete structures first-order formulas are equivalent to
existential formulas (see e.g.~\cite[Theorem~7.3.1]{Hodges}).
\end{proof}
}

As another application, we refine
Theorem~\ref{thm:reducts2} by giving a finer (at least in theory)
classification of the reducts of the random graph.

\begin{cor}\label{cor:classificationUpToExistential}
Up to existential interdefinability, the random graph has exactly
five reducts.
\end{cor}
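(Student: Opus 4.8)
The plan is to show that, on the class of reducts of $G$, the relation of existential interdefinability coincides with the relation of first-order interdefinability; the statement then follows at once from Theorem~\ref{thm:reducts2}, which says there are exactly five classes under the latter.

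One inclusion is immediate: if $\Gamma$ and $\Delta$ are existentially interdefinable, then in particular every relation of $\Gamma$ is (existentially, hence) first-order definable in $\Delta$ and vice versa, so $\Gamma$ and $\Delta$ are first-order interdefinable. For the converse, I would argue as follows. Suppose $\Gamma$ and $\Delta$ are first-order interdefinable reducts of $G$, and let $R$ be a relation of $\Gamma$. By hypothesis $R$ has a first-order definition by some formula $\phi$ over the signature of $\Delta$. Now $\Delta$ is a reduct of $G$, so by Corollary~\ref{cor:mc} it is model-complete, which by the equivalent characterization of model-completeness recalled above means that $\phi$ is equivalent, modulo the first-order theory of $\Delta$, to an existential formula. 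Hence $R$ has an existential definition in $\Delta$. Running the same argument with the roles of $\Gamma$ and $\Delta$ exchanged (using model-completeness of $\Gamma$, again by Corollary~\ref{cor:mc}) shows that every relation of $\Delta$ has an existential definition in $\Gamma$. Thus $\Gamma$ and $\Delta$ are existentially interdefinable.

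Combining the two directions, first-order interdefinability and existential interdefinability partition the reducts of $G$ into the same equivalence classes, and by Theorem~\ref{thm:reducts2} there are exactly five of them; the five witnessing structures $(V;E)$, $(V;R^{(4)})$, $(V;R^{(3)})$, $(V;R^{(5)})$, $(V;=)$ can be taken as representatives. I do not expect any real obstacle here: the entire content of the corollary is funneled through Corollary~\ref{cor:mc} (model-completeness of all reducts), and once that is in hand the remaining argument is the purely formal observation that model-completeness upgrades a first-order definition over a fixed reduct to an existential one.
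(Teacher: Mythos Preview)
Your proof is correct, and it takes a somewhat different route from the paper's. The paper argues directly from Theorem~\ref{thm:endos}, re-running the case split from the proof of Corollary~\ref{cor:mc}: for a reduct $\Gamma$, either the monoid of self-embeddings is generated by its permutations (in which case $\Gamma$ is existentially interdefinable with one of the five structures of Theorem~\ref{thm:reducts}), or $e_E$ or $e_N$ is a self-embedding (and then, as in Corollary~\ref{cor:mc}, $\Gamma$ is existentially interdefinable with $(V;=)$). You instead treat Corollary~\ref{cor:mc} as a black box and observe uniformly that model-completeness of every reduct collapses first-order interdefinability to existential interdefinability, then invoke Theorem~\ref{thm:reducts2}. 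Your argument is more modular and avoids repeating the case analysis; the paper's is slightly more self-contained in that it does not need to pass through the relational restatement in Theorem~\ref{thm:reducts2}. Both rest on the same engine, namely Theorem~\ref{thm:endos}.
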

\begin{proof}
In the same way as in the proof of Corollary~\ref{cor:mc}, we can
use Theorem~\ref{thm:endos} to show that either the self-embeddings
of a reduct $\Gamma$ are generated by the automorphisms, and
$\Gamma$ is existentially interdefinable with one of the structures
described in Theorem~\ref{thm:reducts}; or otherwise $\Gamma$ has an
existential definition in $(V;=)$, which is again one of the five
cases from Theorem~\ref{thm:reducts}.
\end{proof}

The endomorphism monoid $\End(G)$ of the random graph has been
studied in~\cite{DelicDolinka,BonatoDelic,RandomGraphMonoid}. By
Proposition~\ref{thm:pres}, studying closed transformation monoids
containing $\Aut(G)$ is equivalent to studying reducts of $G$ up to existential positive
interdefinability. A complete classification of all locally closed
transformation monoids that contain all permutations of $V$, and
hence of the reducts of $(V; =)$ up to existential positive
interdefinability, has been given in~\cite{BodChenPinsker}; there is
only a countable number of such monoids. The results of the present
paper are far from providing a full classification of the locally
closed transformation monoids that contain $\Aut(G)$ -- this is left for future investigation.

\section{Ramsey-theoretic preliminaries}\label{ssect:ramsey}
We recall some
Ramsey-type theorems and extend these theorems for our purposes. This will allow us to find patterns in colorings of edges and non-edges
of graphs and of graphs equipped with additional structure.

We start by recalling a theorem on ordered structures due to
 Ne\v set\v ril and R\"odl \cite{NesetrilRoedlOrderedStructures} of which we will make heavy use. Let
$\tau=\tau' \cup \{\prec\}$ be a relational signature, and let $\cal
C(\tau)$ be the class of all finite $\tau$-structures $\S$ where
$\prec$ denotes a linear order on the domain of $\S$. For
$\tau$-structures $\A, \B$, let ${\A}\choose{\B}$ be the set of all
substructures of $\A$ that are isomorphic to $\B$ (we also refer to
members of ${\A}\choose{\B}$ as \emph{copies of $\B$ in $\A$}). For
a finite number $k\geq 1$, a \emph{$k$-coloring} of the copies of
$\B$ in $\A$ is simply a mapping $\chi$ from ${{\A}\choose{\B}}$
into a
 set of size $k$.

\begin{definition}
For $\S,\H,\P \in \cal C(\tau)$ and $k \geq 1$, we write $\S
\rightarrow (\H)^\P_k$ iff for every $k$-coloring $\chi$ of the
copies of $\P$ in $\S$ there exists a copy $\H'$ of $\H$ in $\S$
such that all copies of $\P$ in $\H'$ have the same color under
$\chi$.
\end{definition}

\begin{theorem}[from~\cite{NesetrilRoedlOrderedStructures,
NesetrilRoedlPartite, AbramsonHarrington}]\label{thm:ramsey} The
class $\cal C(\tau)$ of all finite relational ordered
$\tau$-structures is a \emph{Ramsey class}, i.e., for all $\H,\P \in
\cal C(\tau)$ and $k \geq 1$ there exists $\S \in \cal C(\tau)$ such
that $\S \rightarrow (\H)^\P_k$.
\end{theorem}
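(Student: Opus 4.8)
The plan is to invoke the \emph{partite construction} of Ne\v set\v ril and R\"odl, which is the standard route to this theorem; I would not reprove it from scratch but rather recall its architecture. Two simplifications come first. One is that it suffices to treat two colors, the case of $k$ colors following by a routine induction on $k$. The other, special to $\cal C(\tau)$, is that the presence of the linear order $\prec$ makes every structure in $\cal C(\tau)$ \emph{rigid}, so that a copy of $\B$ in $\A$ is literally an embedding and there is no bookkeeping stemming from automorphisms. The proof then has two layers: an auxiliary \emph{partite lemma}, and the partite construction that bootstraps it to the general statement.

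First I would set up partite structures. Call $\A \in \cal C(\tau)$ \emph{$\P$-partite} if $V(\A)$ is partitioned into blocks indexed by the vertices of $\P$, compatibly with $\prec$ (the blocks ordered among themselves as in $\P$), in such a way that the block-projection $V(\A) \to V(\P)$ is a homomorphism of $\tau'$-structures; a \emph{transversal} copy of $\P$ in $\A$ is a copy that is a section of this projection, i.e., meets every block exactly once. The partite lemma states: for every $\P$-partite $\B$ and every $k \geq 1$ there is a $\P$-partite $\A$ such that for every $k$-coloring of the transversal copies of $\P$ in $\A$ there is a block-respecting induced copy of $\B$ in $\A$ all of whose transversal copies of $\P$ receive the same color. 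I would prove this by induction on the number of blocks of $\P$: the base case, $\P$ a single vertex, is the pigeonhole principle; the inductive step, in which one new block is adjoined, relies on the Hales--Jewett theorem (equivalently, the Graham--Rothschild parameter-set theorem), which locates a monochromatic combinatorial line inside the new block while the inductive hypothesis governs the part built so far --- concretely one amalgamates many copies of the smaller structure along the new block and checks that the amalgam is again $\P$-partite with $\prec$ a linear order compatible with the partition.

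Given the partite lemma, I would derive the theorem by the partite construction. Using the ordinary finite Ramsey theorem for $|V(\P)|$-element subsets, build a crude first approximation $\A^0$: a structure (carrying an auxiliary partition) that contains a copy of $\H$, whose copies of $\P$ can be enumerated $C_1, \dots, C_M$, and which has the feature that \emph{if} each $C_i$ could be assumed monochromatic then ordinary Ramsey would already furnish a monochromatic copy of $\H$. Then construct a sequence $\A^0, \A^1, \dots, \A^M$, where the passage from $\A^{i-1}$ to $\A^i$ applies the partite lemma (with $\P$-partite structures sitting over the block pattern picked out by $C_i$) to make $C_i$ ``safe'', i.e., so that it may be taken monochromatic in the final structure; a bookkeeping argument shows that the copies handled in earlier steps are not disturbed. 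Setting $\S := \A^M$ and forgetting the auxiliary partition yields $\S \to (\H)^\P_k$, since the copies of $\P$ in $\S$ are now under control.

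The main obstacle is the partite lemma, and within it the inductive step: arranging the Hales--Jewett or parameter-set application so that the monochromatic line in the new block is exactly a genuine induced copy of $\B$, and verifying at every amalgamation that one stays inside $\cal C(\tau)$ with $\prec$ linear and respected by the blocks. The secondary delicate point is the bookkeeping in the partite construction, namely that fixing the color of $C_i$ does not re-color or destroy the copies dealt with at steps $1, \dots, i-1$. A convenient alternative that avoids handling arbitrary relational signatures directly is to first establish the special case of ordered hypergraphs (or of ordered vertex-colored graphs, as actually needed later in this paper) and then to encode general ordered $\tau$-structures into ordered hypergraphs; full details of both routes are in \cite{NesetrilRoedlOrderedStructures, NesetrilRoedlPartite, AbramsonHarrington}.
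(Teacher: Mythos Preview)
The paper does not prove this theorem at all: it is quoted as a known result from \cite{NesetrilRoedlOrderedStructures, NesetrilRoedlPartite, AbramsonHarrington} and used as a black box. Your sketch of the partite lemma plus partite construction is the standard Ne\v{s}et\v{r}il--R\"odl argument from those references, and the outline is accurate, so in that sense you are not diverging from the paper but rather supplying what the paper deliberately omits.
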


\begin{cor}\label{cor:coloringEdgesAndNonEdgesOfG}
    For every finite graph $\H$ and for all colorings $\chi_E$ and $\chi_N$ of the edges and the non-edges of the random graph
    $G$, respectively, by finitely many colors, there exists an isomorphic copy of $\H$ in $G$ on which both colorings are constant.
\end{cor}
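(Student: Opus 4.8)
The plan is to deduce Corollary~\ref{cor:coloringEdgesAndNonEdgesOfG} from Theorem~\ref{thm:ramsey} by encoding the two colorings $\chi_E$ and $\chi_N$ as a single coloring of a suitable class of finite ordered structures, and then using the compactness (i.e.\ $\omega$-categoricity and homogeneity) of the random graph to pass from a fixed large finite Ramsey witness $\S$ living inside $G$ to the monochromatic copy of $\H$ we want. First I would fix an enumeration of $V$, which induces a linear order $\prec$ on every finite subset of $G$; thus every finite induced subgraph of $G$ can be regarded canonically as a finite ordered graph, i.e.\ as a $\tau$-structure for $\tau = \{E,\prec\}$. The point of the order is that it gives each unordered edge or non-edge $\{x,y\}$ of $G$ a well-defined ``type'' as a copy of one of the two two-element ordered structures $\P_E$ (two $\prec$-comparable points with an edge) and $\P_N$ (two $\prec$-comparable points without an edge) inside the ambient ordered graph.

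Next I would set up the finite Ramsey statement to invoke. Let $\H^+$ be a fixed ordered copy of $\H$ (any ordering of $V(\H)$ will do, since any ordered copy of $\H$ inside $G$ yields, after forgetting $\prec$, an induced copy of $\H$ in $G$). Apply Theorem~\ref{thm:ramsey} twice — or once to the disjoint-union coloring — to obtain a finite ordered graph $\S \in \C(\tau)$ with $\S \to (\H^+)^{\P_E}_{k}$ and $\S \to (\H^+)^{\P_N}_{k}$, where $k$ bounds the number of colors used by $\chi_E$ and $\chi_N$; concretely I would first take $\S_1$ with $\S_1 \to (\H^+)^{\P_E}_k$, then take $\S$ with $\S \to (\S_1)^{\P_N}_k$, so that inside $\S$ one can first homogenize the non-edge coloring on a copy of $\S_1$ and then homogenize the edge coloring on a copy of $\H^+$ inside that. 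Since the random graph embeds every finite graph as an induced subgraph, there is an induced copy of $\S$ in $G$; because $\prec$ is just the restriction of our fixed order on $V$, this copy is in fact an ordered copy of $\S$, and $\chi_E, \chi_N$ restrict to colorings of its copies of $\P_E$ and $\P_N$ (with at most $k$ colors each). By the choice of $\S$, inside this copy there is an ordered copy of $\H^+$ on which the induced coloring of $\P_E$-copies (= restriction of $\chi_E$ to the edges of that copy) is constant and the induced coloring of $\P_N$-copies (= restriction of $\chi_N$ to its non-edges) is constant. Forgetting $\prec$ yields an induced copy of $\H$ in $G$ on which both $\chi_E$ and $\chi_N$ are constant, which is exactly what is claimed.

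The only genuine subtlety — and the step I would be most careful about — is the ``two colorings at once'' bookkeeping: one must make sure the nested application of the Ramsey property is done so that homogenizing the edge coloring on a sub-copy does not disturb the already-achieved homogeneity of the non-edge coloring, which is why I stabilize the coarser object ($\S_1$) for $\chi_N$ first and only then apply the $\P_E$-Ramsey property inside it; alternatively one can combine both into a single coloring $\chi$ of the two-element substructures of $\S$ (coloring a $\P_E$-copy by its $\chi_E$-value and a $\P_N$-copy by its $\chi_N$-value, with disjoint palettes) and invoke Theorem~\ref{thm:ramsey} once with $\P$ ranging over both isomorphism types — but since Theorem~\ref{thm:ramsey} as stated fixes a single $\P$, the clean route is the two-step nesting. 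Everything else is routine: that $G$ contains induced copies of arbitrary finite graphs is immediate from the extension property, and that a fixed global enumeration of $V$ makes every finite subgraph a legitimate member of $\C(\tau)$ is immediate as well.
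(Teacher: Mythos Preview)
Your approach matches the paper's --- two nested applications of Theorem~\ref{thm:ramsey} (one for $\P_E$, one for $\P_N$), then embed the resulting finite object into $G$ --- but one step is not justified as written. You fix a global order $\prec$ on $V$ by enumeration and then assert that an induced copy of the graph underlying $\S$ in $G$ ``is in fact an ordered copy of $\S$'' because it inherits $\prec$. This does not follow: an arbitrary graph-copy of $\S$, equipped with the inherited order, need not be order-isomorphic to $\S$. What you actually need is that $(V;E,\prec)$ embeds $\S$ as an \emph{ordered} graph, and the universality of the unordered random graph does not give this directly. (It happens to be true when $\prec$ has order type $\omega$, via the extension property together with the fact that each vertex has only finitely many $\prec$-predecessors, but you have not supplied that argument.)

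The paper sidesteps the issue by dropping the order immediately after invoking Theorem~\ref{thm:ramsey}: it observes that copies of the ordered two-point edge structure $\bar{\K_2}$ in $\bar{\S}$ are in bijection with the edges of the unordered reduct $\S$, so the ordered Ramsey statement $\bar{\S}\to(\bar{\H})^{\bar{\K_2}}_k$ descends to an unordered statement about edge-colorings of the graph $\S$. One then needs only to embed the \emph{unordered} graph into $G$, which requires no global order on $V$ at all. Your proof can be repaired either by supplying the ordered-embedding justification sketched above or, more cleanly, by making this same observation and discarding the global order.
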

\begin{proof}
    Let $k$ be the number of colors used altogether by $\chi_E$ and $\chi_N$.
    Let $\prec$ be any total order on the domain of $\H$, and denote the structure obtained from $\H$ by adding the order $\prec$ to the signature
    by $\bar{\H}$. Consider the complete graph $\K_2$ on two vertices, and order its two vertices anyhow to arrive at a structure $\bar{\K_2}$. Then the
    coloring $\chi_E$ of the edges of $\H$ can be viewed as a
    coloring of the copies of $\bar{\K_2}$ in $\bar{\H}$. Let $\bar{\S}$ with $\bar{\S}\rightarrow (\bar{\H})^{\bar{\K_2}}_k$ be provided by
    the preceding theorem, and let $\S$ be $\bar{\S}$ without the order. Then $\S$ is a graph with the property that whenever
    we color its edges with $k$ colors, then there is a copy of $\H$
    in $\S$ all of whose edges have the same color. Now we repeat
    the argument for the non-edges, starting from $\S$ instead of
    $\H$. We then arrive at a graph $\T$ with the property that
    whenever we color its edges and non-edges by $k$ colors, then
    there is a copy $\H'$ of $\H$ in $\T$ such that all edges of $\H'$ have the same color,
    and such that
    non-edges of $\H'$ have the same color. $\T$ has a copy in $G$,
    proving the claim.
\end{proof}

We will not only need to color edges of graphs, but also of graphs
equipped with additional structure.

\begin{defn}
\hfill
\begin{itemize}
\item
    An \emph{$n$-partitioned graph} is a structure $\U=(U;F,U_1,\ldots,U_n)$, where $(U;F)$ is a graph and each
    $U_i$ is a subset of $U$ such that the $U_i$ form a partition of $U$.
    \item     An \emph{$n$-constant graph} is a structure $\U=(U;F,u_1,\ldots,u_n)$, where $\U=(U;F)$ is a graph, and $u_i\in U$ are distinct.
\end{itemize}
\end{defn}

Observe that $n$-constant graphs are not relational structures;
therefore, in order to apply Theorem~\ref{thm:ramsey}, we have to
make them relational: To every $n$-constant graph 
$\U=(U;F,u_1,\ldots,u_n)$ we can assign an $n+2^n$-partitioned graph 
$\tilde{\U}=(U;F,\{u_1\},\ldots,\{u_n\},U_1,\ldots,U_{2^n})$ in
which the $u_i$ belong to singleton sets, and in which for every
possible relative position (edge or non-edge) to the $u_i$ we have a
set $U_j$ of all elements in $U\setminus\{u_1,\ldots,u_n\}$ having
this position. (In the language of model theory, every of the
$n+2^n$ sets corresponds to a maximal quantifier-free $1$-type over
the structure $\U$.) We call the parts $U_i$ the \emph{proper} parts
of $\tilde{\U}$.

\begin{defn}
    Let $\Gamma$ be a structure and $a^1,\ldots,a^m\in \Gamma$. We write
    $\typ(a^1,\ldots,a^m)$ for the set of quantifier-free formulas satisfied by
    the tuple $(a^1,\ldots,a^m)$ in $\Gamma$, and refer to this set as the \emph{type} of $(a^1,\ldots,a^m)$ in $\Gamma$.
\end{defn}

\begin{defn}
    Let $\Gamma$ be a structure and let $m\geq 1$. A coloring $\chi$ of the $m$-element subsets of $\Gamma$ is called \emph{canonical} iff for all tuples $(a^1,\ldots,a^m)$ and $(b^1,\ldots,b^m)$ enumerating $m$-element subsets of $\Gamma$, if $\typ(a^1,\ldots,a^m)=\typ(b^1,\ldots,b^m)$, then they induce subsets of equal color under $\chi$.
\end{defn}

In this section, we will consider colorings of the two-element subsets of graphs, $n$-partitioned graphs and $n$-constant graphs. For disjoint subsets $S_1$ and $S_2$ of any such structure, we will say that a coloring is canonical on $S_1$ iff it satisfies the definition of canonicity for subsets of $S_1$; moreover, we will say that a coloring is canonical between $S_1$ and $S_2$ iff it satisfies the definition of canonicity for subsets of $S_1\cup S_2$ which have precisely one element in $S_1$ and one element in $S_2$.

\begin{lem}[The $n$-partitioned graph Ramsey
lemma]\label{lem:partitionedGraphRamseyLemma}
    Let $n, k\geq 1$. For any finite $n$-partitioned graph
    $\U=(U;F,U_1,\ldots,U_n)$ there exists a finite $n$-partitioned graph
    $\Q=(Q;D,Q_1,\ldots,Q_n)$ with the property that for all colorings of
    the two-element subsets of $Q$ with $k$ colors,
    there exists a copy of $\U$ in $\Q$ on which the coloring is canonical.
\end{lem}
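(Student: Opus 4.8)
The plan is to derive the lemma from the Ne\v{s}et\v{r}il--R\"{o}dl theorem (Theorem~\ref{thm:ramsey}), viewing $n$-partitioned graphs as relational structures in the signature $\tau'=\{F,U_1,\dots,U_n\}$ with $F$ binary and the $U_i$ unary, and working in the ordered signature $\tau=\tau'\cup\{\prec\}$. The one genuinely load-bearing idea is to equip the target $\U$ with a \emph{block-convex} linear order $\prec$: order the elements so that $x\prec y$ whenever $x\in U_i$, $y\in U_j$ and $i<j$, ordering inside each block arbitrarily. Call the resulting ordered $n$-partitioned graph $\bar{\U}$. Block-convexity ensures that in \emph{any} isomorphic copy of $\bar{\U}$ (as an ordered $n$-partitioned graph) a pair $\{x,y\}$ with $x$ in the $i$-th block and $y$ in the $j$-th block, $i<j$, must satisfy $x\prec y$; hence the isomorphism type of the \emph{ordered} two-element substructure on $\{x,y\}$ is determined exactly by the unordered pair of block indices together with the edge/non-edge status of $\{x,y\}$ --- which is precisely the information a canonical coloring is permitted to see.

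Next I would enumerate $\bar{\P}_1,\dots,\bar{\P}_m$: all two-element ordered $n$-partitioned graphs whose order is block-convex, up to isomorphism. These are indexed by a pair of block indices $i\le j$ and an edge/non-edge bit, so $m=n(n+1)$, and every two-element substructure of a copy of $\bar{\U}$ is isomorphic to one of them. Now build a tower of Ramsey objects: set $\bar{\S}_0:=\bar{\U}$, and for $t=1,\dots,m$ let $\bar{\S}_t$ be a finite ordered $n$-partitioned graph with $\bar{\S}_t\rightarrow(\bar{\S}_{t-1})^{\bar{\P}_t}_k$; this exists by Theorem~\ref{thm:ramsey}, and exactly as in the proof of Corollary~\ref{cor:coloringEdgesAndNonEdgesOfG} one checks that the structure it supplies may be taken to be a genuine ordered $n$-partitioned graph. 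Let $\Q$ be $\bar{\S}_m$ with its order removed; I claim this $\Q$ works.

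Given a $k$-coloring $\chi$ of the two-element subsets of $Q$, I would run the standard pigeonhole iteration. Put $\bar{T}_m:=\bar{\S}_m$; for $t=m,m-1,\dots,1$, regard $\chi$ as a $k$-coloring of the copies of $\bar{\P}_t$ inside $\bar{T}_t\cong\bar{\S}_t$ (the copy on $\{a,b\}$ receiving color $\chi(\{a,b\})$) and use $\bar{T}_t\rightarrow(\bar{\S}_{t-1})^{\bar{\P}_t}_k$ to obtain a sub-copy $\bar{T}_{t-1}\cong\bar{\S}_{t-1}$, with $\bar{T}_{t-1}\subseteq\bar{T}_t$, on which all copies of $\bar{\P}_t$ are monochromatic. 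Since $\bar{T}_0\subseteq\bar{T}_1\subseteq\cdots\subseteq\bar{T}_m$, in the final copy $\bar{T}_0\cong\bar{\U}$ every two-element subset of each fixed isomorphism type $\bar{\P}_t$ receives one and the same color. Forgetting the order turns $\bar{T}_0$ into a copy $\U'$ of $\U$ in $\Q$, and by the convexity remark above ``one color per type $\bar{\P}_t$'' says exactly that $\chi$ is canonical on $\U'$: two pairs inside a block of $\U'$ have the same ordered type $(i,i,\varepsilon)$ whenever they agree on the edge/non-edge bit $\varepsilon$, and two pairs between the $i$-th and $j$-th blocks have the same ordered type $(i,j,\varepsilon)$ whenever they agree on $\varepsilon$, so their $\chi$-colors coincide.

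The tower bookkeeping and the translation between $n$-partitioned graphs and $\tau$-structures are routine. The point I would be most careful about is the choice of the block-convex order on $\U$: without convexity a cross-pair between two blocks could appear in a copy of $\bar{\U}$ in either orientation, so Theorem~\ref{thm:ramsey} --- which only forces the color to be constant on each \emph{ordered} type --- could leave the two orientations with different colors, and canonicity would fail. Block-convexity is precisely what collapses ``ordered type'' to ``unordered block-pair together with edge status'', which is what the lemma asks for.
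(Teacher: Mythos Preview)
Your proof is correct and follows essentially the same strategy as the paper's: equip $\U$ with a block-convex order and iterate Theorem~\ref{thm:ramsey} once for each two-element ordered substructure type, then use block-convexity of the final copy of $\bar\U$ to collapse ordered types to the unordered block-pair-plus-edge data that canonicity requires. The paper carries this out explicitly for $n=2$ (listing the six pattern structures $\mathscr L^1,\dots,\mathscr L^6$) and inserts an extra step re-ordering each intermediate Ramsey witness to be block-convex, but the underlying argument is the same as yours.
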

\begin{proof}

We show the lemma for $n=2$; the generalization to larger $n$ is
straightforward. For $n=2$, we apply Theorem~\ref{thm:ramsey} six
times: Once for the edges in $U_1$, once for the edges in $U_2$,
once for the edges between $U_1$ and $U_2$, and then the same for
all three kinds of non-edges.

In general, we would have to apply the theorem $2\ (n+{n
\choose{2}})$ times: Once for the edges of each part $U_i$, once for
the edges between any two distinct parts $U_i, U_j$, and then the
same for all non-edges on and between parts.

So assume $n=2$. We exhibit the idea in detail for the edges between
$U_1$ and $U_2$. Let $\prec$ be any total order on $U$ with the
property that $u_1\prec u_2$ for all $u_1\in U_1$, $u_2\in U_2$.
Consider the 2-partitioned graph ${\mathscr
L}^1=(\{a,b\};\{(a,b),(b,a)\},\{a\},\{b\})$ and order its vertices
by setting $a\prec b$; so ${\mathscr L}^1$ consists of two adjacent
vertices which are ordered somehow, and which lie in different
parts. By Theorem~\ref{thm:ramsey}, there exists an ordered
partitioned graph $\Q^1=(Q^1;D^1,Q_1^1,Q_2^1,\prec)$ such that $\Q^1
\rightarrow (\U)^{{\mathscr L}^1}_k$.

Now, if we change the order on $\Q^1$ in such a way that $r\prec s$
for all $r\in Q^1_1$ and all $s\in Q^1_2$ and such that the order
within the parts $Q^1_1, Q^1_2$ remains unaltered, then the
statement $\Q^1 \rightarrow (\U)^{{\mathscr L}^1}_k$ still holds:
For, given a coloring of the copies of ${\mathscr L}^1$ with respect
to the new ordering, we obtain a coloring of (possibly fewer) copies
of ${\mathscr L}^1$ with respect to the old ordering. There, we
obtain a copy $\U'$ of $\U$ such that all copies of ${\mathscr L}^1$
in $\U'$ have the same color. But in this copy, by the choice of the
order on $\U$, we have that $r\prec s$ for all $r\in U_1'$ and all
$s\in U_2'$. Therefore, this copy is also a substructure of $\Q^1$
with respect to the new ordering.

Since we can change the ordering on $\Q^1$ in the way described
above, the colorings of the copies of ${\mathscr L}^1$ are just
colorings of those pairs $\{r,s\}$, with $r\in Q_1^1$ and $s\in
Q_2^1$, which are edges.

Now we repeat the process with the structure ${\mathscr
L}^2=(\{a,b\};\{(a,b),(b,a)\},\{a,b\},\emptyset)$, ordered again by
setting $a\prec b$, starting with $\Q^1$. We then obtain a structure
$\Q^2$; this step takes care of the edges which lie within $U_1$.
After that we proceed with ${\mathscr
L}^3=(\{a,b\};\{(a,b),(b,a)\},\emptyset,\{a,b\})$, thereby taking
care of the edges within $U_2$. We then apply
Theorem~\ref{thm:ramsey} three more times with the structures
${\mathscr L}^4=(\{a,b\};\emptyset,\{a\},\{b\})$, ${\mathscr
L}^5=(\{a,b\};\emptyset,\{a,b\},\emptyset)$, and ${\mathscr
L}^6=(\{a,b\};\emptyset,\emptyset,\{a,b\})$, in order to ensure
homogeneous non-edges.
\end{proof}

We now arrive at the goal of this section, namely the following
lemma, which we are going to apply to operations on the random graph
numerous times in the sections to come.

\begin{lem}[The $n$-constant graph Ramsey lemma]\label{lem:constantGraphRamseyLemma}
    Let $n, k\geq 1$. For any finite $n$-constant graph
    $\U=(U;F,u_1,\ldots,u_n)$ there exists a finite $n$-constant graph
    $\Q=(Q;D,q_1,\ldots,q_n)$ with the property that for all colorings of the two-element subsets of
    $Q$ with $k$ colors, there exists a copy of $\U$ in
    $\Q$ on which the coloring is canonical.
\end{lem}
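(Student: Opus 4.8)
The plan is to reduce the $n$-constant graph Ramsey lemma directly to the $n$-partitioned graph Ramsey lemma (Lemma~\ref{lem:partitionedGraphRamseyLemma}) via the relationalization $\U \mapsto \tilde{\U}$ described just above the statement. Given a finite $n$-constant graph $\U=(U;F,u_1,\ldots,u_n)$, first I would form the associated $n+2^n$-partitioned graph $\tilde{\U}=(U;F,\{u_1\},\ldots,\{u_n\},U_1,\ldots,U_{2^n})$, where the singletons $\{u_i\}$ and the proper parts $U_j$ (one for each quantifier-free $1$-type over $\{u_1,\ldots,u_n\}$) partition $U$. Apply Lemma~\ref{lem:partitionedGraphRamseyLemma} with the same $k$ to obtain a finite $(n+2^n)$-partitioned graph $\tilde{\Q}=(Q;D,P_1,\ldots,P_n,Q_1,\ldots,Q_{2^n})$ such that any $k$-coloring of the two-element subsets of $Q$ has a copy of $\tilde{\U}$ on which it is canonical.

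The one genuine issue is that Lemma~\ref{lem:partitionedGraphRamseyLemma} produces a partitioned graph whose first $n$ parts $P_1,\ldots,P_n$ need not be singletons, and whose edge/non-edge pattern between the $P_i$ and the $Q_j$ need not match the pattern prescribed by the $1$-types that define $\tilde{\U}$; moreover nothing forces the $P_i$ to be pairwise non-adjacent or any particular adjacency. So $\tilde{\Q}$ is not literally of the form $\tilde{\Q'}$ for an $n$-constant graph $\Q'$. The fix is to select, inside $\tilde{\Q}$, a single copy of $\tilde{\U}$ arbitrarily; this copy \emph{is} of the form $\tilde{\Q'}$ for an $n$-constant graph, namely $\Q'=\U$ itself. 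But that is circular unless we first pass to a copy that is large enough. The clean way around this: apply Lemma~\ref{lem:partitionedGraphRamseyLemma} not to $\tilde{\U}$ but to a larger partitioned graph. Concretely, choose elements $q_1,\ldots,q_n$ one from each of $P_1,\ldots,P_n$ in $\tilde{\Q}$; the substructure of the underlying graph $(Q;D)$ induced on the set of all $q\in Q$ whose adjacency pattern to $q_1,\ldots,q_n$ realizes one of the $2^n$ types, together with the $q_i$ themselves, is an $n$-constant graph $\Q$. I claim this $\Q$ works: given a $k$-coloring $\chi$ of the two-element subsets of $Q$, extend it arbitrarily to all of $\tilde{\Q}$, obtain from Lemma~\ref{lem:partitionedGraphRamseyLemma} a copy $\tilde{\U}'$ of $\tilde{\U}$ in $\tilde{\Q}$ on which $\chi$ is canonical; this copy, after forgetting the partition structure and remembering only that the images of $u_1,\ldots,u_n$ are distinguished vertices, is a copy of $\U$ in $\Q$ on which $\chi$ is canonical on $\U$ in the required sense — because canonical on $\tilde{\U}'$ (canonical on each part and between parts) is \emph{by definition} the same as canonical on the $n$-constant graph $\U$.

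So the real content is just bookkeeping: one must check that a copy of the partitioned graph $\tilde{\U}$ inside $\tilde{\Q}$ maps the singleton parts $\{u_i\}$ to singleton parts, hence picks out $n$ distinguished vertices, and maps each proper part $U_j$ into the matching proper part, so that the copy, viewed as an $n$-constant graph, is isomorphic to $\U$; and that the notion "canonical on $\U$" for $n$-constant graphs was defined (two definitions above the statement) precisely so as to coincide with "canonical on the partitioned graph $\tilde{\U}$". Given those two observations the reduction is immediate. The main obstacle I anticipate is purely notational — keeping straight which parts of $\tilde{\Q}$ are the "singleton-to-be" parts and which are the proper parts, and verifying that an embedding of $\tilde{\U}$ respects this distinction — rather than any combinatorial difficulty; all the Ramsey-theoretic work has already been done in Lemma~\ref{lem:partitionedGraphRamseyLemma}. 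For general $n$ the argument is verbatim the same, only with $n+2^n$ in place of the partition size.
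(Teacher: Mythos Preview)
Your reduction has a genuine gap, and it is precisely the one the paper's proof is engineered to avoid. After applying Lemma~\ref{lem:partitionedGraphRamseyLemma} to $\tilde{\U}$, the parts $P_1,\ldots,P_n$ of the resulting $\tilde{\Q}$ are typically large. You then fix $q_i\in P_i$ and declare $\Q=(Q;D,q_1,\ldots,q_n)$. But a copy of $\U$ in $\Q$, in the sense of $n$-constant graphs, must send $u_i$ to the \emph{specific} constant $q_i$; this is how the lemma is used later (e.g.\ in the proof of Proposition~\ref{prop:endos-weak} one works in $(V;E,x,y)$ with the constants fixed at the chosen $x,y$). The canonical copy $\tilde{\U}'$ that Lemma~\ref{lem:partitionedGraphRamseyLemma} hands you does map each $\{u_i\}$ to a single element of $P_i$, but there is no reason this element is your pre-chosen $q_i$. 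So $\tilde{\U}'$ is in general \emph{not} a copy of $\U$ in $\Q$; its constants sit at the wrong vertices. Extending the coloring arbitrarily from $Q$ to all of $\tilde{\Q}$ does not help, because you have no control over which elements of $P_1,\ldots,P_n$ the Ramsey lemma will select.

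The paper circumvents this by never letting the partitioned Ramsey lemma touch the constants at all. It applies Lemma~\ref{lem:partitionedGraphRamseyLemma} only to the $2^n$-partitioned graph $\R$ on $U\setminus\{u_1,\ldots,u_n\}$, obtaining $\R^0$; then it runs a separate Ramsey argument (Theorem~\ref{thm:ramsey} applied $2^n$ times to single-vertex patterns) so that any $k^n$-coloring of the \emph{vertices} of the resulting structure $\T$ can be made constant on each part. Finally it glues the constants $u_1,\ldots,u_n$ back on with the prescribed adjacencies. The point is that the colors of the pairs $\{u_i,t\}$ with $t\in T$ can be encoded as a single $k^n$-valued vertex coloring of $T$, and the vertex-Ramsey step homogenizes this while the constants stay fixed. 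That encoding of ``edges to the constants'' as a vertex coloring is the missing idea in your argument; without it, you cannot pin down the constants in advance.
\medskip
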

\begin{proof}
    Let $\tilde{\U}:=(U;F,\{u_1\},\ldots,\{u_n\}, U_1,\ldots,U_{2^n})$ be the
    partitioned graph associated with $\U$. We would
    like to use the partitioned graph Ramsey lemma (Lemma~\ref{lem:partitionedGraphRamseyLemma}) in order to
    obtain $\Q$; but we want the singleton sets $\{u_i\}$ of the
    partition to remain singletons, which is not guaranteed by that lemma.

    So consider the $2^n$-partitioned graph
    $\R:=(U\setminus\{u_1,\ldots,u_n\};F, U_1,\ldots,U_{2^n})$, and apply the partitioned
    graph Ramsey lemma to this graph to obtain a partitioned graph $\R^0$.

    Equip $\R^0$ with any linear order. Now consider the
    ordered $2^n$-partitioned graph ${\mathscr L}^1$ which has just
    one vertex, and whose first part contains this single vertex.
    Apply Theorem~\ref{thm:ramsey} in order to obtain an ordered partitioned graph $\R^1$ such that
    $\R^1\To (\R^0)^{{\mathscr L}^1}_{k^n}$.

    Next, consider the ordered $2^n$-partitioned graph ${\mathscr L}^2$ which has just
    one vertex, and whose second part contains this single vertex. Apply Theorem~\ref{thm:ramsey} in order to obtain
    an ordered
    partitioned graph $\R^2$ such that
    $\R^2\To (\R^1)^{{\mathscr L}^2}_{k^n}$.

    Repeat this procedure with the ordered
    $2^n$-partitioned graphs ${\mathscr L}^3,\ldots,{\mathscr L}^{2^n}$; ${\mathscr L}^i$ has its single vertex in its $i$-th part. We end up with an ordered partitioned graph
    $\R^{2^n}$. We now forget its order and denote the resulting structure by $\T=(T;C,T_1,\ldots,T_{2^n})$.

    $\T$ has the following property: Whenever we color its vertices with
    $k^n$ colors, then we find a copy of $\R^0$ in $\T$ such that the coloring is constant on each part of this copy.
    Hence, it has the property
    that if we color its two-element subsets and its vertices with
    $k$ and $k^n$ colors, respectively, then we find in it a copy of
    $\R$ on which the first coloring is canonical,
    and such that the color of the vertices depends only on the part the vertex lies in.

    Now consider the structure
    $\S:=(T\cup\{u_1,\ldots,u_n\};B,\{u_1\},\ldots,\{u_n\},T_1,\ldots,T_{2^n})$, where $B$ consists of the edges of $\T$, plus
    edges connecting the $u_i$ with the vertices of some parts $T_i$, depending on whether $u_i$ was in $\U$ connected to the vertices in $U_i$ or not.
    Clearly, $\S$ is the partitioned graph of the $n$-constant graph $\Q:=(T\cup\{u_1,\ldots,u_n\};B,u_1,\ldots,u_n)$.
    We claim that $\Q$ has the property we want to prove.
    Assume that we color the two-element subsets of $T\cup\{u_1,\ldots,u_n\}$ with $k$
    colors. We must find a copy of $\U$ in $\Q$ on which the coloring
    is canonical. Divide the coloring into two colorings, namely the coloring
    restricted to two-element subsets of $T$, and the coloring of two-element subsets which contain at least one element $u_i$
    outside $T$. The color of the sets $\{u_i,u_j\}$ completely outside $T$ is irrelevant for what we want to prove, so forget about these.

    Now the coloring of those sets which have exactly one element outside $T$ can be encoded in a coloring of the vertices of
    $T$: Each vertex is given one of $k^n$ colors, depending on the colors of its edges leading to
    $u_1,\ldots,u_n$. So we have encoded the original coloring into a
    coloring of two-elements subsets of $T$ and a coloring of the vertices of $T$.
    With our observation above, this proves the lemma.
\end{proof}

\section{Finding structure in mappings on the random graph}\label{sect:structureInMappings}

In this section we show how to use the Ramsey-theoretic results from
the last section in our context. That is, we will use those results in order to find regular patterns in the behavior of unary functions from $V$ to $V$.

\begin{defn}
    Let $\tau$ be any signature and let $\cal C(\tau)$ be a class of finite $\tau$-structures. We say that a property $P$ \emph{holds for arbitrarily large
    elements of $\cal C(\tau)$} iff for any $\F\in\cal C(\tau)$ there exists $\H\in\cal C(\tau)$  such that $\F$ embeds into $\H$ and $P(\H)$
    holds. We say that \emph{$P$ holds for all sufficiently large
    elements of $\cal C(\tau)$} iff there exists $\F\in\cal C(\tau)$ such that $P$ holds for $\H$ whenever $\F$ embeds
    into $\H$.
\end{defn}

Our classes $C(\tau)$ will be closed under induced substructures; moreover, our properties $P$ will be \emph{hereditary}, i.e., if $P(\H)$ holds, then $P$
also holds for all substructures of $\H$. The definition then says
that $P$ holds for arbitrarily large elements of $\cal C(\tau)$ iff
for any $\F\in\cal C(\tau)$ there is $\F'\in\cal C(\tau)$ isomorphic
to $\F$ such that $P(\F')$ holds.  

In our situation, $C(\tau)$ will also have the \emph{joint embedding property (JEP)}, i.e.,  for any
    two structures in $\cal C(\tau)$ there exists a structure in $\C(\tau)$ that embeds both structures. We then have that if $P$ holds for all sufficiently large elements of $C(\tau)$, then it holds for arbitrarily large elements of $C(\tau)$. Observe also that under (JEP), if arbitrarily large structures in $\cal C(\tau)$
have one of finitely many properties, then one of those properties holds for
arbitrarily large elements of $\cal C(\tau)$.

\begin{defn}
    Let $e,f \colon V\To V$. We say that $e$ \emph{behaves as $f$ on $F \subseteq V$} iff there is
    an automorphism $\alpha$ of $G$ such that $f(x)=\alpha(e(x))$ for
    all $x \in F$. We say that \emph{$e$ interpolates $f$ modulo
    automorphisms} iff for every finite $F \subseteq V$ there is an
    automorphism $\beta$ of $G$ such that $e(\beta(x))$ behaves as $f$
    on $F$; so this is the case iff there exist automorphisms
    $\alpha,\beta$ such that $\alpha(e(\beta(x))=f(x)$ for all $x\in
    F$.
\end{defn}

Note that if $e$ interpolates $f$ modulo automorphisms, then it also
generates $f$.

\begin{defn}\label{def:canonical}
    Let $\U$ be a graph, an $n$-partitioned graph, or an $n$-constant graph. Any function $f\colon \U\To \U$ induces a coloring of the two-element subsets of $\U$ as follows: the color of a set $\{x,y\}$ is the type of $(f(x),f(y))$ with respect to the graph relation of $\U$. We say that $f$ is \emph{canonical} iff the coloring it induces is canonical.
\end{defn}

\begin{proposition}\label{prop:interpolation}
Let $e \colon V \rightarrow V$ be a mapping on the random graph $G$. Then $e$ is canonical on arbitrarily large subgraphs of $G$, 
and
interpolates either the identity, $e_E$, $e_N$, a constant function,
or $\flip$ modulo automorphisms.
\end{proposition}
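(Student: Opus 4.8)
The plan is to apply Corollary~\ref{cor:coloringEdgesAndNonEdgesOfG} to the coloring of edges and non-edges of $G$ induced by $e$. For distinct $x,y\in V$, declare the pair $\{x,y\}$ to have \emph{$e$-type} $\mathbf 0$ if $e(x)=e(y)$, type $\mathbf E$ if $E(e(x),e(y))$, and type $\mathbf N$ if $N(e(x),e(y))$; this gives a three-coloring of the edges of $G$ and a three-coloring of the non-edges of $G$. For a pair $(b,b')\in\{\mathbf 0,\mathbf E,\mathbf N\}^2$ and a finite graph $\H$, let $P_{b,b'}(\H)$ be the property that $G$ contains an induced copy of $\H$ on which every edge has $e$-type $b$ and every non-edge has $e$-type $b'$. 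Each $P_{b,b'}$ is preserved under passing to induced subgraphs, and by Corollary~\ref{cor:coloringEdgesAndNonEdgesOfG} every finite graph with at least one edge and one non-edge satisfies one of these nine properties. Hence, by the observation that arbitrarily large structures satisfying one of finitely many (downward closed) properties force a single one of these properties to hold for arbitrarily large structures, there is a fixed pair $(b_0,b_0')$ such that $P_{b_0,b_0'}$ holds for arbitrarily large finite graphs, and therefore --- again by closure under induced subgraphs --- for \emph{all} finite graphs.

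The next step is to pin down $(b_0,b_0')$. Since $P_{b_0,b_0'}$ holds for the three-vertex path with edges $\{a,b\},\{b,c\}$ and non-edge $\{a,c\}$, a copy of it witnessing $P_{b_0,b_0'}$ with $b_0=\mathbf 0$ would give $e(a)=e(b)=e(c)$, forcing $\{a,c\}$ to have $e$-type $\mathbf 0$ and thus $b_0'=\mathbf 0$; applying the same reasoning to the complement of that path (one edge, two non-edges) shows conversely that $b_0'=\mathbf 0$ forces $b_0=\mathbf 0$. Consequently
\[(b_0,b_0')\in\{(\mathbf 0,\mathbf 0),(\mathbf E,\mathbf E),(\mathbf N,\mathbf N),(\mathbf E,\mathbf N),(\mathbf N,\mathbf E)\}.\]

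Now I would fix an arbitrary finite $F\subseteq V$ and deduce the interpolation. Since $P_{b_0,b_0'}$ holds for the subgraph of $G$ induced by $F$, there is an induced copy $\F'$ of it in $G$ realizing the $(b_0,b_0')$-behavior; by homogeneity of $G$ there is an automorphism $\beta$ of $G$ restricting to an isomorphism from the subgraph induced by $F$ onto $\F'$. Then on $F$ the composite $e\circ\beta$ is: constant if $(b_0,b_0')=(\mathbf 0,\mathbf 0)$; injective with image inducing a complete graph if $(\mathbf E,\mathbf E)$; injective with image inducing an independent set if $(\mathbf N,\mathbf N)$; an embedding if $(\mathbf E,\mathbf N)$; and, in the case $(\mathbf N,\mathbf E)$, a map with $E(e(\beta(x)),e(\beta(y)))\Leftrightarrow N(x,y)\Leftrightarrow E(-x,-y)$ for distinct $x,y\in F$, where we used that $-$ is an isomorphism from $G$ onto its complement and that $\beta\in\Aut(G)$. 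In each of the five cases the image $e(\beta(F))$ and the image of the corresponding target operation (a fixed constant operation, $e_E$, $e_N$, the identity, $-$, respectively) induce isomorphic finite substructures of $G$, matched up by $e(\beta(x))\mapsto c$, $\mapsto e_E(x)$, $\mapsto e_N(x)$, $\mapsto x$, $\mapsto -x$; extending that isomorphism to an automorphism $\alpha$ of $G$ yields $\alpha\circ e\circ\beta$ equal on $F$ to the target operation. Since $(b_0,b_0')$ was chosen before $F$, the same target operation works for every $F$, so $e$ interpolates that operation modulo automorphisms.

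The point I expect to require the most care is exactly the need to obtain \emph{one} target operation valid for all finite $F$: this is what the ``arbitrarily large structures'' observation buys us, and it is why we work with the properties $P_{b,b'}$ over all finite graphs rather than analyzing each $F$ in isolation. The exclusion of the four ``mixed'' pairs $(\mathbf 0,\mathbf E),(\mathbf 0,\mathbf N),(\mathbf E,\mathbf 0),(\mathbf N,\mathbf 0)$ via a path and its complement, and the identification of $(\mathbf N,\mathbf E)$ with $-$ rather than with a new operation, are the only mildly subtle computations; everything else is a routine double use of homogeneity.
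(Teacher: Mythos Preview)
Your proof is correct and follows essentially the same approach as the paper's: both color pairs by what $e$ does to them, invoke Corollary~\ref{cor:coloringEdgesAndNonEdgesOfG} to homogenize on copies of arbitrary finite graphs, use the ``arbitrarily large structures'' observation to fix a single behavior $(b_0,b_0')$, rule out the four mixed pairs via the two three-vertex graphs with one and two edges, and conclude by homogeneity. Your write-up is somewhat more explicit about the final step (producing both $\beta$ and $\alpha$), but the argument is the same.
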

\begin{proof}
We show that arbitrarily large finite subgraphs of $G$ have the
property that $e$ behaves on them like one of the operations of the
proposition. Since there are finitely many operations to choose
from, $e$ then behaves like one fixed operation $p$ from the list on
arbitrarily large finite subgraphs of $G$. By the
homogeneity of $G$, we can freely move finite graphs
around by automorphisms, proving that $e$ interpolates $p$.

So let $\F$ be any finite graph; we have to find a copy $\F'$ of
$\F$ in $G$ such that $e$ behaves like one of the mentioned
operations on this copy.

We color all pairs $\{x,y\}$ of distinct vertices of $G$
\begin{itemize}
\item by $1$ if $e(x)=e(y)$,
\item by $2$ if $(e(x),e(y)) \in E$,
\item by $3$ if $(e(x),e(y)) \in N$.
\end{itemize}

By Corollary~\ref{cor:coloringEdgesAndNonEdgesOfG} there exists a
copy $\F'$ of $\F$ in $G$ such that all edges and all non-edges of
$\F'$ have the same color $\chi_E$ and $\chi_N$, respectively. If
$(\chi_E,\chi_N)=(1,1)$, then $e$ behaves like the constant function
on $F'$. If $(\chi_E,\chi_N)=(2,3)$, then it behaves like the
identity, and if $(\chi_E,\chi_N)=(3,2)$, then $e$ behaves like $\flip$.
If $(\chi_E,\chi_N)=(2,2)$ or $(\chi_E,\chi_N)=(3,3)$, then $e$
behaves like $e_E$ or $e_N$, respectively. Finally, it is easy to
see that $(\chi_E,\chi_N)=(1,q)$ or $(\chi_E,\chi_N)=(q,1)$, where
$q\in\{2,3\}$, is impossible if $\F$ contains the two three-element
graphs with one and two edges, respectively.
\end{proof}

\begin{defn}
    Let $\tau$ be a signature and let $T$ be a theory in this language. We call a $\tau$-structure \emph{$\aleph_0$-universal} for $T$ iff it satisfies $T$ and embeds all finite models of $T$.
\end{defn}

\begin{lem}[The $n$-partite graph interpolation lemma]\label{lem:n-partite-interpolation}
        Let $\U=(U;C,U_1,\ldots,U_n)$ be an $\aleph_0$-universal partitioned graph, and let $f \colon U\To U$.
        Then every finite partitioned graph has a copy in $\U$
        on which $f$ is canonical.
\end{lem}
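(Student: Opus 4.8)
The plan is to deduce this lemma from the $n$-partitioned graph Ramsey lemma (Lemma~\ref{lem:partitionedGraphRamseyLemma}) together with the $\aleph_0$-universality of $\U$, in close parallel with the proof of Proposition~\ref{prop:interpolation}. Fix a finite $n$-partitioned graph $\mathcal{W}$; the goal is to find a copy of $\mathcal{W}$ inside $\U$ on which $f$ is canonical. Recall that this means that $f$ sends all edges within any one part of the copy (and, separately, all non-edges within a part, and all edges between two fixed distinct parts, and all non-edges between two fixed distinct parts) to pairs of the same one of the three kinds ``collapsed'', ``edge'', and ``non-edge'' of $(U;C)$.

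First I would apply Lemma~\ref{lem:partitionedGraphRamseyLemma} with $k = 3$ to obtain a finite $n$-partitioned graph $\Q$, with domain $Q$, such that every $3$-colouring of the two-element subsets of $Q$ admits a copy of $\mathcal{W}$ in $\Q$ on which the colouring is canonical. Since $\U$ is $\aleph_0$-universal and $\Q$ is a finite structure in the same signature, $\Q$ embeds into $\U$; fix such a copy $\Q'$ of $\Q$ inside $\U$ and let $Q'$ be its domain. Next I would colour each two-element subset $\{x,y\}$ of $Q'$ by $1$ if $f(x) = f(y)$, by $2$ if $C(f(x),f(y))$ holds, and by $3$ if $f(x) \neq f(y)$ and $C(f(x),f(y))$ fails; this colour records exactly the kind of the pair $\{f(x),f(y)\}$. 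Applying the Ramsey property of $\Q$ to this colouring produces a copy $\mathcal{W}'$ of $\mathcal{W}$ inside $\Q'$ on which the colouring is canonical, meaning that within each part of $\mathcal{W}'$, and between each pair of distinct parts, all edges receive one colour and all non-edges receive one colour. Unwinding the meaning of the three colours, this says precisely that $f$ is canonical on $\mathcal{W}'$. Finally, $\mathcal{W}'$ is a substructure of $\Q'$, hence of $\U$, and is isomorphic to $\mathcal{W}$, so it is the desired copy.

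I do not expect any serious obstacle: all of the combinatorial content is already contained in Lemma~\ref{lem:partitionedGraphRamseyLemma}, and here one merely combines it with universality. The one point that needs a moment's care is the translation between ``the $3$-colouring is canonical on $\mathcal{W}'$'' and ``$f$ is canonical on $\mathcal{W}'$'', which is immediate once the three colours are matched with the three possibilities collapse / edge / non-edge for the image of a pair; one should also observe that the number of parts stays equal to $n$ throughout, so that ``same signature'' in the definition of $\aleph_0$-universality really applies and $\Q$ genuinely embeds into $\U$. This is the same scheme --- a finite Ramsey statement plus universality, followed by reading off the colour classes --- that underlies the proof of Proposition~\ref{prop:interpolation}.
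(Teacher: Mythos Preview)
Your proposal is correct and follows exactly the approach the paper takes: the paper's own proof is a two-line reference to Lemma~\ref{lem:partitionedGraphRamseyLemma} together with the remark that one colours pairs by what $f$ does to them, just as in Proposition~\ref{prop:interpolation}. You have simply spelled out those two lines in full detail, including the choice $k=3$ and the use of $\aleph_0$-universality to embed $\Q$ into $\U$.
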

\begin{proof}
    This is immediate from the $n$-partitioned graph Ramsey lemma (Lemma~\ref{lem:partitionedGraphRamseyLemma}):
    Just like in the proof of Proposition~\ref{prop:interpolation}, we color the edges and non-edges of $\U$ according to what $f$
    does to them.
\end{proof}

\begin{lem}[The $n$-constant graph interpolation lemma]\label{lem:n-constant-interpolation}
        Let $\U=(U;C,u_1,\ldots,u_n)$ be an $\aleph_0$-universal $n$-constant graph, and let $f \colon U\To U$.
        Then every finite $n$-constant graph has a copy in $\U$
        on which $f$ is canonical.
\end{lem}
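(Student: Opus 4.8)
The plan is to mirror exactly the proof of the $n$-partite graph interpolation lemma (Lemma~\ref{lem:n-partite-interpolation}), but replacing the $n$-partitioned graph Ramsey lemma with the $n$-constant graph Ramsey lemma (Lemma~\ref{lem:constantGraphRamseyLemma}). First I would fix an arbitrary finite $n$-constant graph $\F=(F;D,v_1,\ldots,v_n)$; the goal is to produce a copy of $\F$ inside $\U=(U;C,u_1,\ldots,u_n)$ on which $f$ is canonical, meaning canonical on the associated $n+2^n$-partitioned graph.

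The key step is the coloring trick. Just as in Proposition~\ref{prop:interpolation}, I would color each two-element subset $\{x,y\}$ of $U$ by one of three colors according to what $f$ does to it: color $1$ if $f(x)=f(y)$, color $2$ if $E(f(x),f(y))$, color $3$ if $N(f(x),f(y))$. Now apply Lemma~\ref{lem:constantGraphRamseyLemma} with $k=3$ to the finite $n$-constant graph $\F$: it yields a finite $n$-constant graph $\Q=(Q;D',q_1,\ldots,q_n)$ such that every $3$-coloring of the two-element subsets of $Q$ admits a copy of $\F$ in $\Q$ on which the coloring is canonical. Since $\U$ is $\aleph_0$-universal, it embeds $\Q$; restricting our coloring to (the image of) $\Q$ inside $\U$ and applying the Ramsey property of $\Q$, we obtain a copy $\F'$ of $\F$ in $\U$ on which the coloring — hence the behavior of $f$ — is canonical on the corresponding $n+2^n$-partitioned graph, i.e., $f$ is canonical on $\F'$ in the sense of the definition preceding this lemma. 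This is precisely the conclusion required.

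One small point to check: the embedding of $\Q$ into $\U$ must respect the constant structure, i.e., send $q_i$ to $u_i$. This is guaranteed by the definition of $\aleph_0$-universality for $n$-constant graphs once we note that an embedding of $n$-constant graphs by definition maps the distinguished vertices to the distinguished vertices; alternatively, one observes that $\U$ being $\aleph_0$-universal as an $n$-constant graph means exactly that every finite $n$-constant graph embeds into it as such a structure. Given this, the argument is complete and essentially routine.

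I do not expect any serious obstacle here; the lemma is a direct analogue of Lemma~\ref{lem:n-partite-interpolation}, and all the real work has already been done in establishing the $n$-constant graph Ramsey lemma (Lemma~\ref{lem:constantGraphRamseyLemma}), whose proof handled the delicate point of keeping the distinguished vertices as singleton parts. The only thing to be careful about in writing it up is to phrase the canonicity conclusion in terms of the $n+2^n$-partitioned graph associated with $\F'$, matching the definition of ``$f$ canonical on an $n$-constant graph'' given just above the statement.
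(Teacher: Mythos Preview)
Your proposal is correct and follows exactly the approach the paper intends: the paper's proof is the single line ``This is immediate from the $n$-constant graph Ramsey lemma (Lemma~\ref{lem:constantGraphRamseyLemma}),'' and you have simply unpacked that sentence by spelling out the three-coloring according to the behavior of $f$, invoking Lemma~\ref{lem:constantGraphRamseyLemma}, and embedding the resulting $\Q$ into $\U$ via $\aleph_0$-universality. The care you take about the embedding respecting the constants is appropriate and matches the tacit assumption in the paper.
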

\begin{proof}
    This is immediate from the $n$-constant graph Ramsey lemma (Lemma~\ref{lem:constantGraphRamseyLemma}).
\end{proof}

\section{Unary functions}\label{sect:minimalUnary}

We now have the tools to settle the unary case: In this section, we will prove Theorem~\ref{thm:mainUnary} which characterizes the unary minimal functions, Theorem~\ref{thm:reducts} which lists the five closed supergroups of $\Aut(G)$, and Theorem~\ref{thm:endos} which states that any closed monoid containing $\Aut(G)$ either is generated by the group of its permutations, or contains $e_E$, $e_N$, or a constant function. We start by applying Lemma~\ref{lem:n-constant-interpolation} to prove

\begin{lem}\label{lem:eN}
    Let $e \colon V\To V$ be so that it preserves $N$ but not $E$. Then $e$ generates $e_N$. Dually, if $e \colon V\To V$ is so that it preserves $E$ but not $N$, then $e$ generates $e_E$.

\end{lem}
\begin{proof}
    We prove that for every finite subset $F$ of $V$,
    $e$ generates an operation which behaves like $e_N$ on $F$. We
    first claim that there are adjacent vertices $a,b\in V$ such that $(e(a),e(b)) \in N$.
     Since $e$ does not preserve $E$, there exist $u,v$ with
    $(u,v) \in E$ such that $(e(u),e(v)) \nin E$. If $(e(u),e(v)) \in N$, then we are done.
    If $e(u)=e(v)$, then choose $w$ such that $(w,u) \in E$ and $(w,v) \in N$. We have $(e(w),e(u))=(e(w),e(v)) \in N$, so $u,w$ prove the claim.

    Now, $\U:=(V;E,a,b)$ is an $\aleph_0$-universal 2-constant graph. Therefore, by
    Lemma~\ref{lem:n-constant-interpolation}, $e$ is canonical on
    arbitrarily large substructures of $\U$. Since $e$ preserves
    $N$, it is easy to see that if $e$ is canonical on a $2$-constant
    graph which is sufficiently large, then  $e$ must be injective; for
    example, if $e$ is canonical on a graph which contains the three-element graph with two edges, then
    $e$ cannot collapse any edges of that graph. Hence, $e$ is canonical
    and injective on arbitrarily large $2$-constant subgraphs of
    $\U$. Since $e$ preserves $N$, we have that for arbitrarily large substructures of
    $\U$, it behaves like the identity or like $e_N$ on and between
    the parts of these structures; in particular, it does not turn any non-edges into edges on and between the parts of these structures. Hence, for any finite $2$-constant subgraph of $\U$, by applying an automorphism of $G$ and then $e$, we can delete
    the edge between the two constants without turning any non-edge of that $2$-constant graph into an edge. But that means that starting from any finite graph, we
    can delete all edges by repeating this process, choosing any
    edge we want to get rid of in each step. This proves the lemma.
\end{proof}

\begin{lem}\label{lem:constant}
    If $e \colon V\To V$ preserves neither $E$ nor $N$ and is not injective, then $e$ generates a constant operation.
\end{lem}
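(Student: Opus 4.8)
The plan is to use the $n$-constant graph interpolation lemma (Lemma~\ref{lem:n-constant-interpolation}) in the same spirit as in the proof of Lemma~\ref{lem:eN}. First I would record the basic information the hypotheses give: since $e$ does not preserve $E$, there are adjacent $u,v$ with $e(u)e(v)\notin E$, and since $e$ does not preserve $N$, there are non-adjacent $u',v'$ with $e(u')e(v')\notin N$; and since $e$ is not injective, there are distinct $a,b$ with $e(a)=e(b)$. The first goal is to promote the non-injectivity together with the failure to preserve one of the relations into a \emph{canonical} collapse pattern on a large constant graph. Concretely, I would look at which of the two adjacency types $\{a,b\}$ has (edge or non-edge); by symmetry say $E(a,b)$, so $\U:=(V;E,a,b)$ is an $\aleph_0$-universal $2$-constant graph, and by Lemma~\ref{lem:n-constant-interpolation}, $e$ is canonical on arbitrarily large $2$-constant subgraphs of $\U$. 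On such a large copy, $e$ collapses the distinguished edge between the two constants (since $e(a)=e(b)$), so by canonicity it collapses \emph{every} edge of that copy; meanwhile its behaviour on non-edges is one of: collapse, keep as non-edge, turn into edge.

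The next step is to argue that by iterating we can collapse an arbitrary finite subgraph of $G$ down to a single point, hence generate a constant. Starting from any finite $F\subseteq V$, view it inside $G$, pick any edge $\{x,y\}$ of it, and apply the previous paragraph with the constants placed at $x,y$: we obtain an operation (a composition of $e$ with automorphisms) that sends $x$ and $y$ to the same vertex. The subtle point is that we must control what happens to the \emph{rest} of $F$ under this operation so that the process can be continued. Here the right way to organize it is to note that $e$ is not only canonical but, on large enough $2$-constant graphs, behaves on and between the proper parts like one of $\id,-,e_E,e_N$ (the four canonical unary behaviours), so the image of $F$ is again (isomorphic to) a specific finite graph, possibly with fewer distinct vertices and a controlled edge relation. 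If in this process every pair that is currently a non-edge stays a non-edge or gets collapsed, then each application strictly decreases the number of edges without ever creating new ones, and after finitely many steps we reach an edgeless finite graph; if along the way a non-edge is turned into an edge, I would instead use that $e$ also fails to preserve $N$ and run the dual reduction to strip edges — the key is that we can always make \emph{some} chosen pair collapse while not increasing the total number of non-collapsed pairs, so a suitable termination/measure argument (say, lexicographic on (number of distinct points, number of edges)) finishes the job.

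I expect the main obstacle to be exactly this bookkeeping: ensuring that the iterated collapsing terminates at a single vertex rather than stabilizing at some nontrivial graph, and handling the interaction between "collapse an edge" and the possibility that $e$ simultaneously turns non-edges into edges. The clean route is probably to first establish, as a preliminary claim, that from the hypotheses $e$ generates an operation that is canonical on all of some $\aleph_0$-universal $2$-constant graph and collapses the distinguished pair; then observe that, by composing such operations for every pair of an arbitrary finite graph one at a time, one can realize on any finite $F$ an operation that collapses any prescribed pair while only ever merging or preserving the status of the other pairs (never splitting a collapsed pair, since compositions of non-injective canonical maps can only lose distinctions). Under that invariant the number of distinct points in the image is non-increasing and strictly decreases each time we target a pair of distinct points, so after at most $|F|-1$ steps the whole of $F$ maps to one point, i.e., $e$ generates a constant operation, which is what we want.
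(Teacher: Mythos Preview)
Your approach is considerably more elaborate than the paper's, and it contains a genuine error. You claim that on a large $2$-constant copy where $e$ is canonical and $e(a)=e(b)$ with $E(a,b)$, ``by canonicity it collapses every edge of that copy.'' This does not follow: in the $(2+2^2)$-partitioned graph associated to the $2$-constant graph, the pair $\{a,b\}$ is the \emph{unique} pair between the two singleton constant-parts, so its collapse says nothing about edges within or between the four proper parts. Canonicity forces uniform behaviour on each fixed pair of parts, not uniform behaviour across different pairs of parts.

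The paper bypasses the Ramsey machinery entirely for this lemma. Its argument is: since $e$ is not injective it collapses some pair; say without loss of generality an edge, and set $g:=e$. If $e$ also collapses some non-edge, set $h:=e$. Otherwise, since $e$ violates $N$, some non-edge is sent to an edge; composing with an automorphism that moves this new edge onto the edge collapsed by $g$, and applying $e$ once more, yields an operation $h$ that collapses a non-edge. Now for any finite $F$, pick any two distinct points of the current image, use homogeneity of $G$ to move them onto the collapsed pair of $g$ or $h$ (according to whether they form an edge or a non-edge), and apply $g$ or $h$. The number of distinct points in the image strictly decreases, so after at most $|F|$ steps everything has merged to a single vertex. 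Your final paragraph essentially rediscovers this termination measure, but the ``bookkeeping'' worry you raise --- what happens to the other pairs, whether non-edges become edges along the way --- is irrelevant: one never needs to control the graph structure of the intermediate images, only their cardinality.
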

\begin{proof}
Since $e$ is not injecitve, it collapses without loss of generality an edge (otherwise dualize). Since $e$ violates $N$, it either collapses a non-edge or sends some non-edge to an edge, which, with the help of an appropriate automorphism, can be collapsed by another application of $e$. Thus $e$ generates operations $g,h$ which collapse an edge and a non-edge, respectively. Having this, one sees that $e$ generates a constant function on each finite subset $F$ of $V$, by shifting $F$ around with automorphisms and applying $g$ and $h$ to collapse all points in $F$ to a single vertex.
\ignore{
    We must show that for any finite subset $F$ of $V$, $e$ generates an operation which is constant on $F$.

    Observe that $e$ generates operations $g,h$ which collapse an edge
    and a non-edge, respectively. To see this, note that since
    $e$ is not injective, it collapses an edge or a non-edge; say
    without loss of generality it collapses an edge, so we can set
    $g:=e$. If it also collapses a non-edge, then we are done.
    Otherwise, since $e$ violates $N$, it sends some non-edge to an
    edge, which, with the help of an appropriate automorphism, can
    be collapsed by another application of $e$.

    Having this, one proceeds inductively to collapse all the vertices of
    $F$, shifting $F$ around with automorphisms accordingly and applying $g$ and
    $h$. After at most $|F|$ steps, the whole of $F$ is collapsed to a
    single vertex.}
\end{proof}

The following proposition already identifies the five minimal functions of Theorem~\ref{thm:mainUnary}.

\begin{prop}\label{prop:endos-weak}
Let $\Gamma$ be a reduct of $G$. Then one
of the following cases applies.
\begin{enumerate}
\item $\Gamma$ has a constant endomorphism.
\item $\Gamma$ has $e_E$ as an endomorphism.
\item $\Gamma$ has $e_N$ as an endomorphism.
\item $\Gamma$ has $\flip$ as an automorphism.
\item $\Gamma$ has $\sw$ as an automorphism.
\item $\Aut(G)$ is dense in $\End(\Gamma)$.
\end{enumerate}
\end{prop}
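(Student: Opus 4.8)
The plan is to study the closed transformation monoid $\M:=\End(\Gamma)$. Since $\M$ contains $\Aut(G)$ and is a closed monoid, it is closed under the generation relation: whenever $f\in\M$ generates $g$, then $g\in\M$. If $\M$ contains a constant operation, $e_E$, or $e_N$, we are directly in case~(1), (2), or~(3); so assume henceforth that $\M$ contains none of these three operations. I claim that then either every element of $\M$ is a self-embedding of $G$, which gives case~(6) --- by homogeneity of $G$ the self-embeddings of $G$ are precisely the elements of the topological closure of $\Aut(G)$, hence precisely the operations generated by $\Aut(G)$ --- or else $-$ or $\sw$ is an automorphism of $\Gamma$, which gives case~(4) or~(5).

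To prove the claim, let $e\in\M$ be an operation that is not a self-embedding of $G$, so that $e$ violates $E$ or $N$. If $e$ violates $E$ but preserves $N$, then Lemma~\ref{lem:eN} shows that $e$ generates $e_N$, so $e_N\in\M$, a contradiction; dually, if $e$ preserves $E$ but violates $N$, then Corollary~\ref{lem:eE} yields $e_E\in\M$, again a contradiction. If $e$ violates both $E$ and $N$ and is not injective, then Lemma~\ref{lem:constant} produces a constant operation in $\M$, a contradiction. Hence every element of $\M$ that is not a self-embedding of $G$ is an injective operation violating both $E$ and $N$. If no such element exists, then all elements of $\M$ are self-embeddings of $G$ and we are in case~(6). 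Otherwise, fix an injective $e\in\M$ violating both $E$ and $N$; it then suffices to prove that $e$ generates $-$ or $\sw$, for then that operation lies in $\M$, and since all isomorphisms between $G$ and its complement generate one another --- and likewise all the operations $i_S$ --- the inverse of that operation lies in $\M$ as well, making it an automorphism of $\Gamma$.

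The remaining and main step is thus to show that an injective $e$ violating both $E$ and $N$ which generates neither $e_E$, nor $e_N$, nor a constant must generate $-$ or $\sw$. By Proposition~\ref{prop:interpolation}, $e$ behaves on arbitrarily large finite subgraphs of $G$ like one of $\id$, $e_E$, $e_N$, a constant, or $-$; behaving like $e_E$, $e_N$, or a constant on arbitrarily large subgraphs would force $e$ to interpolate, hence generate, a forbidden operation, and behaving like $-$ would let $e$ generate $-$, finishing the proof. So we may assume that $e$ behaves like $\id$ on arbitrarily large finite subgraphs, while $e$ still has a \emph{bad} edge $\{a,b\}$ (an edge with $N(e(a),e(b))$) and a \emph{bad} non-edge $\{c,d\}$ (a non-edge with $E(e(c),e(d))$). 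I would then name $a$ and $b$ (and, if necessary, $c$ and $d$) and apply the $n$-constant graph interpolation lemma (Lemma~\ref{lem:n-constant-interpolation}) to the resulting $\aleph_0$-universal constant graph: $e$ is canonical on arbitrarily large copies of it, and as there are only finitely many possible canonical behaviors, one of them occurs on arbitrarily large copies. On each proper part, which induces a copy of $G$, this behavior must again be like $\id$ --- behavior like $e_E$, $e_N$, or $-$ on a proper part would again generate a forbidden operation or $-$ --- and one then wants to deduce from this, together with the flip witnessed between the named vertices, that up to composition with automorphisms $e$ agrees on arbitrarily large sets with a single switch $\sw$, so that $e$ interpolates and hence generates $\sw$. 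The main obstacle is precisely this final reduction: one has to verify that none of the finitely many admissible canonical types of $e$ on a constant graph with a few named vertices can occur without already forcing $e$ to generate one of $e_E$, $e_N$, a constant, $-$, or a full switch; in particular, one must rule out --- if necessary by naming further vertices and iterating --- ``mixed'' behaviors in which only some of the pairs incident to a named vertex, or only some of the cross-pairs between two proper parts, get flipped.
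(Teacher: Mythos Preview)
Your overall strategy is the same as the paper's: reduce to an injective $e$ violating both $E$ and $N$, invoke Proposition~\ref{prop:interpolation} to conclude that $e$ behaves like $\id$ on arbitrarily large subgraphs (else it interpolates $-$, $e_E$, or $e_N$), then name the endpoints of a bad edge and apply the $n$-constant interpolation lemma. Where your proposal stops short is exactly the ``final reduction'' you flag, and your expectation of what comes out of it is not quite right.

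Two concrete points. First, you only discuss the behavior of $e$ \emph{on} the proper parts of the $2$-constant graph, not \emph{between} them. The paper handles the between-parts behavior separately (as a $2$-partitioned graph argument): if $e$ behaves like $-$ between the two parts of arbitrarily large $2$-partitioned subgraphs it generates $\sw$, and if it behaves like $e_E$ or $e_N$ between them it generates $e_E$ or $e_N$. Only after this may one assume $e$ behaves like $\id$ both on and between all proper parts. A further pass, using $1$-constant graphs, pins down the behavior on pairs joining a constant to a proper part: edges from $x$ being deleted with non-edges kept forces $e_N$; edges deleted and non-edges flipped gives $\sw$; edges kept and non-edges flipped gives $e_E$; so one may finally assume all such pairs are kept as well.

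Second, in the surviving case $e$ behaves like the identity on the whole copy \emph{except that it deletes the single named edge $\{x,y\}$}. This does not interpolate $\sw$; rather, it lets you delete one chosen edge from an arbitrary finite graph while leaving all other pairs unchanged, and iterating this yields $e_N$. So the endgame lands in case~(3), not case~(5). No further naming or iteration of constants is needed; two constants suffice.
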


\begin{proof}

If $\Gamma$ has an endomorphism $e$ which preserves $E$ but not $N$
or $N$ but not $E$, then we can refer to Lemma~\ref{lem:eN}. If all of its endomorphisms preserve both
$N$ and $E$, then $\Aut(G)$ is dense in $\End(\Gamma)$. We thus assume henceforth that $\Gamma$ has an endomorphism $e$
which violates both $E$ and $N$.

If $e$ is not injective, then it generates a constant operation, by
Lemma~\ref{lem:constant}. So suppose that $e$ is injective. Fix $(x,y) \in E$ such that $(e(x),e(y)) \in N$.

By Proposition~\ref{prop:interpolation}, $e$ is canonical on
arbitrarily large finite subgraphs of $G$. If $e$ interpolates $\flip$,
$e_E$, or $e_N$ modulo automorphisms, then we are done. So assume
this is not the case, i.e., there is a finite graph $\F_0$ with the
property that on all copies of $\F_0$ in $G$, $e$ does not behave
like any of these operations. Observe that $e$ then behaves like the
identity on arbitrarily large subgraphs of $G$. Moreover, this
assumption implies that if a finite subgraph $\F$ of $G$ is
sufficiently large (i.e., if it embeds $\F_0$), and $e$ is canonical
on $\F$, then $e$ behaves like the identity on $\F$.

We now make a series of observations which rule out bad behavior of
$e$ between subsets of the random graph, and which follow from our
assumptions of the preceding paragraph; the easily verifiable
details are left to the reader.

\begin{itemize}
\item If $e$ behaves like $\flip$ between the parts of arbitrarily large
finite 2-partitioned subgraphs of $G$, then it generates $\sw$.

\item If $e$ behaves like $e_N$ between the parts of arbitrarily
large finite 2-partitioned subgraphs of $G$, then it generates
$e_N$.

\item If $e$ behaves like $e_E$ between the parts
 of arbitrarily large finite 2-partitioned subgraphs of $G$, then
 it generates $e_E$.
\end{itemize}

We assume therefore that for sufficiently large finite
$2$-partitioned subgraphs of $G$, if $e$ is canonical on such a
graph, then $e$ behaves like the identity on and between the parts.

Now observe that $\Q:=(V;E,x,y)$ is an $\aleph_0$-universal
2-constant graph. Let $\F=(F;D,f_1,f_2)$ be any finite 2-constant
graph. By the $n$-constant interpolation lemma
(Lemma~\ref{lem:n-constant-interpolation}), there is a copy $\F'$ of
$\F$ in $\Q$ on which $e$ is canonical. By our assumption above, if
only $\F$ is sufficiently large, then being canonical on a proper part
$F_i'$ of the 6-partitioned graph
$\tilde{\F'}=(F';E,\{x\},\{y\},F_1',\ldots,F_4')$ corresponding to
$\F'$ means behaving like the identity thereon, and being canonical
between proper parts means behaving like the identity between these
parts. Therefore, all 2-constant graphs $\F$ have a copy
$\F'=(F';E,x,y)$ in $\Q$ such that $e$ behaves like the identity on
and between all of the parts $F_i', F_j'$ of the corresponding
partitioned graph $\tilde{\F'}=(F';E,\{x\},\{y\},F'_1,\ldots,F'_4)$.

Of a two-constant graph $\F$, consider the reduct $\H=(F;D,f_1)$.
This reduct has a copy $\H'$ in $\Q^x=(V;E,x)$ on which $e$ is
canonical. The corresponding partitioned graph has two parts $H_1'$,
$H_2'$, and $x$ is adjacent to, say, all vertices in $H_1'$ and to
none in $H_2'$. Since $e$ is canonical on $\H'$, either it preserves all edges between $x$ and $H_1'$, or it turns all these edges into non-edges. Similarly with the non-edges
between $x$ and $H_2'$. If all edges are deleted and all non-edges
kept for arbitrarily large $\H$, then $e$ generates $e_N$. If all
edges are deleted and all non-edges edged for arbitrarily large
$\H$, then $e$ interpolates $\sw$ modulo automorphisms. If all edges
are kept and all non-edges edged for arbitrarily large $\H$, then
$e$ generates $e_E$. So we assume that if only $\H$ is sufficiently large,
then all edges and non-edges are kept by $e$ on those copies of $\H$
on which $e$ is canonical.

We use the same argument with the reduct $(F;D,f_2)$ and
$\Q^y=(V;E,y)$, and arrive at the conclusion that if the
two-constant graph $\F$ is sufficiently large, then on every copy of $\F$
in $\Q$ on which $e$ is canonical, the edges and non-edges leading
from $x$ and $y$ to the other vertices of the copy are kept.

Combining this with what we have established before, we conclude
that if only $\F$ is sufficiently large, and $\F'$ is a copy of $\F$ in
$\Q$ on which $e$ is canonical, then $e$ behaves like the identity
on $\F'$ except between $x$ and $y$, where it deletes the edge.
Hence, for any finite $\F$ we can find a copy in $\Q$ on which $e$
behaves that way. But this implies that starting from any finite
graph $\S:=(F;D)$, we can pick any edge in $\S$, say between
vertices $f_1,f_2$, and then find a copy of $\F:=(F;D,f_1,f_2)$ in
$\Q$ such that $e$ deletes exactly that edge from the copy whithout
changing the rest. Hence, by shifting finite graphs around with
automorphisms, we can delete a single edge from an arbitrary finite
subgraph of $G$ without changing the rest of the graph. Applying
this successively, we can remove all edges from arbitrary finite
graphs, proving that $e$ generates $e_N$.
\end{proof}

Now Theorem~\ref{thm:mainUnary} follows: let $f$ be a minimal function and $\Gamma$ the reduct whose endomorphism monoid is generated by $f$. 
We apply Proposition~\ref{prop:endos-weak} to $\Gamma$. Observe that Case~(6) of that proposition cannot hold for $\Gamma$, since its endomorphism $f$ is non-trivial, and hence not generated $\Aut(G)$. Thus, $\Gamma$ contains one of the functions of the other cases, meaning that $f$ is equivalent to one of those functions. This finishes the proof.

\begin{proposition}\label{prop:above-minus}
Let $\Gamma$ be a reduct of $G$, and
suppose $\Gamma$ is preserved by $\flip$, but not by $e_N, e_E$, or a
constant operation. Then the endomorphisms of $\Gamma$ are
generated by $\{\flip\} \cup \Aut(G)$, or $\Gamma$ is preserved by $\sw$.
\end{proposition}
\begin{proof}

Suppose the endomorphisms of $\Gamma$ are not generated by $\{\flip\}
\cup \text{Aut}(G)$. Then, by Proposition~\ref{prop:galois}, there
is a relation $R$ invariant under $\{\flip\} \cup \text{Aut}(G)$ and an
endomorphism $e$ of $\Gamma$ which violates $R$; that is, there
exists a tuple $a:=(a_1,\ldots,a_n)\in R$ such that
$e(a)=(e(a_1),\ldots,e(a_n))\nin R$.

Since $R$ is definable in $G$, $e$ violates either an
edge or a non-edge. Hence, as in the proof of
Proposition~\ref{prop:endos-weak}, the assumption that $e$ does not
generate $e_N$, $e_E$, or a constant operation implies that $e$ is
injective.

Let $\F=(F;D,f_1,\ldots,f_n)$ be any finite $n$-constant graph. By
the $n$-constant interpolation lemma
(Lemma~\ref{lem:n-constant-interpolation}), there is a copy $\F'$ of
$\F$ in the $\aleph_0$-universal $n$-constant graph
$\Q:=(V;E,a_1,\ldots,a_n)$ such that $e$ is canonical on this copy.

We now make a series of observations on the behavior of $e$ on and
between subsets of $V$ where it is canonical.

\begin{itemize}

\item Since by assumption, $e$ does not
interpolate $e_E$, $e_N$, or a constant operation modulo
automorphisms, it behaves like $\flip$ or the identity on sufficiently
large finite subgraphs of $G$ where it is canonical.

\item Suppose that for arbitrarily large finite $2$-partitioned subgraphs of
$G$, $e$ behaves like the identity on the parts and like $\flip$ between
the parts. Then $e$ generates $\sw$.

\item Suppose that for arbitrarily large finite $2$-partitioned subgraphs of
$G$, $e$ behaves like the identity on the parts and like $e_N$ (like
$e_E$) between the parts. Then $e$ generates $e_N$ ($e_E$).

\item Suppose that for arbitrarily large finite $2$-partitioned subgraphs of
$G$, $e$ behaves like $\flip$ on the parts and like the identity / $e_N$
/ $e_E$ between the parts. Then $e$ and $\flip$  together generate $\sw$
/ $e_E$ / $e_N$. This is because we can apply the preceding two
observations to $-e$.

\item Suppose that for arbitrarily large finite $2$-partitioned subgraphs of
$G$ on which $e$ is canonical, $e$ behaves like $\flip$  on one part and
like the identity on the other part. Then $e$ and $\flip$  together
generate $e_N$.

\end{itemize}

To see the last assertion for the case where $e$ behaves like the
identity between the parts, select an edge within one of the parts
that is mapped to a non-edge. For arbitrary finite $A \subseteq V$
we can now use the operation $e$ to get rid of one edge in the graph
induced by $A$ in $G$ and preserve all other edges, and so
eventually generate an operation that behaves like $e_N$ on $A$. For
the case where $e$ behaves like $\flip$  between the parts, we can apply
the same argument to $-e$. If $e$ behaves like $e_N$ between the
parts, then we can all the more delete edges. If it behaves like
$e_E$ between the parts, then $-e$ behaves like $e_N$ and we are
back in the preceding case.

Summarizing our observations, we can assume that for an arbitrary
finite $n$-constant graph $\F$ there is a copy of $\F$ in $\Q$ such
that $e$ behaves like the identity on and between all proper parts
$F_i',F_j'$ of the corresponding partitioned graph, or like $\flip$  on
and between all of its parts. If only the second case holds for
arbitrarily large $n$-constant graphs $\F$, then we simply proceed
our argument with $-e$ instead of $e$. We can do that since also
$-e(a)\nin R$. Otherwise, picking an automorphism $\alpha$ of
$G$ such that $\alpha (-(-x))=x$ for all $x\in V$, we would have
$\alpha(-(-e(a)))=e(a)\in R$, contrary to our choice of $a$. Thus we
assume that for arbitrary finite $n$-constant graphs $\F$ there is a
copy of $\F$ in $\Q$ such that $e$ behaves like the identity on and
between all proper parts of that copy.

As in the proof of Proposition~\ref{prop:endos-weak}, we may assume that
if a copy $\F'=(F';E,a_1,\ldots,a_n)$ of $\F$ in $\Q$ is large
enough and $e$ is canonical on $\F'$ and behaves like the identity
on and between all proper parts $F_i',F_j'$ of the corresponding
$n$-partitioned graph $\tilde{\F'}$, then it leaves the edges and
non-edges between the $a_i$ and the vertices in
$F'\setminus\{a_1,\ldots,a_n\}$ unaltered. It follows that for
arbitrary finite $n$-constant graphs $\F$ there is a copy of $\F$ in
$\Q$ such that the only edges or non-edges changed by $e$ on this
copy are those between the $a_i$.

Finally, note that since $R$ is definable in the random graph and
$e(a)\nin R$, $e$ destroys at least one edge or one non-edge on
$\{a_1,\ldots,a_n\}$. Without loss of generality, say that $a_1,
a_2$ are adjacent but their values under $e$ are not. We have shown
that for arbitrarily large $2$-constant graphs $\H$, there is a copy
of $\H$ in $(V;E,a_1,a_2)$ such that $e$ behaves like the identity
on this copy, except for the edge between $a_1$ and $a_2$, which is
destroyed. This clearly implies that $e$ generates $e_N$.
\end{proof}

\begin{proposition}\label{prop:above-switch}
Let $\Gamma$ be a reduct of $G$, and
suppose $\Gamma$ is preserved by $\sw$, but not by $e_N, e_E$, or
a constant operation. Then the endomorphisms of $\Gamma$ are
generated by $\{\sw\} \cup \Aut(G)$, or $\Gamma$ is preserved
by $\flip$.
\end{proposition}
\begin{proof}

The proof is very similar to the proof of the preceding proposition.
 This time we know that unless the endomorphisms
are generated by $\{\sw\} \cup \Aut(G)$, there exists
an endomorphism $e$ that violates a relation $R$ which is preserved
by $\{\sw\} \cup \Aut(G)$. Fix a tuple $a$ as before.

As in the preceding proof, we may assume that $e$ is injective. If
$e$ interpolates $\flip$  modulo automorphisms, we are done. Suppose
therefore that if $e$ is canonical on a finite partitioned graph
which is sufficiently large, then it must behave like the identity on its parts.

If $e$ behaves like $e_N$ ($e_E$) between the parts of arbitrarily
large finite $2$-partitioned subgraphs of $G$, then it generates
$e_N$ ($e_E$). Thus we may assume that it behaves like the identity
or $\flip$  between such parts.

Suppose that for arbitrarily large finite $3$-partitioned subgraphs
$\F=(F;E,F_1,F_2,F_3)$ of $G$ on which $e$ is canonical, $e$ behaves
like the $\flip$  between exactly two of the parts, say between
$F_1,F_2$, and like the identity between $F_2,F_3$ and $F_1,F_3$.
Then $e$ is easily seen to generate both $e_N$ and $e_E$. Indeed, if
we want to delete\footnote{For the purposes of the proof, we
identify ourselves with the endomorphism monoid.} any
edge from a finite graph, then we can view the vertices of the edge
as two parts of a $3$-partitioned graph, where the third part
contains all the other vertices. If $e$ behaves like $\flip$  between the
two vertices whose edge we want to delete, and like the identity on
and between the other parts, what happens is exactly that the edge
is deleted.

If for arbitrarily large finite $3$-partitioned subgraphs $\F$ of
$G$ on which $e$ is canonical, $e$ behaves like $\flip$  between, say,
$F_1,F_2$ and $F_1,F_3$, and like the identity between $F_2,F_3$,
then by applying a suitable switch operation $i_A$ to $e$ we are
back in the preceding case. Note here that there is an automorphism
$\alpha$ of $G$ such that $i_A(\alpha(i_A(x)))=x$ for all $x\in V$.
Therefore, $i_A(e(a))\nin R$; for otherwise, we would have
$i_A(\alpha(i_A(e(a)))=e(a)\in R$, a contradiction.

The latter argument works also if $e$ behaves like $\flip$  between all
three parts. Summarizing, we may assume that if $e$ is canonical on
a finite $n$-partitioned graph which is sufficiently large, where $n\geq
3$, then it behaves like the identity on and between all of the
parts.

As for $n$-constant graphs on which $e$ is canonical, $e$ might flip
edges and non-edges between some parts and the constants. However,
this situation can easily be repaired by a single application of
$\sw$.

Finally, observe that at least one edge or one non-edge on
$a_1,\ldots,a_n$ is destroyed, and that we therefore can generate
either $e_N$ or $e_E$.
\end{proof}

\begin{proposition}\label{prop:to-all-permutations}
Let $\Gamma$ be a reduct of $G$, and
suppose $\Gamma$ is preserved by $\sw$ and by $\flip$, but not by $e_N,
e_E$, or a constant operation. Then the endomorphisms of $\Gamma$
are generated by $\{\flip,\sw\} \cup \Aut(G)$, or $\Gamma$
is preserved by all permutations.
\end{proposition}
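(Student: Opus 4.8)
The plan is to follow the proof of Proposition~\ref{prop:above-switch}, now exploiting that both $-$ and $\sw$ — and hence all the switch operations $i_A$, which are generated by $\sw$ — are available, and that all of these preserve the relation we shall work with. Suppose the endomorphisms of $\Gamma$ are not generated by $\{-,\sw\}\cup\Aut(G)$. By Proposition~\ref{prop:galois} there is a relation $R$ with a first-order definition in $G$ that is preserved by $\{-,\sw\}\cup\Aut(G)$ — hence also by every $i_A$, since $i_A$ and $\sw$ generate one another and so have the same invariant relations among those definable in $G$ — together with an endomorphism $e$ of $\Gamma$ violating $R$; fix $a=(a_1,\dots,a_n)\in R$ with $e(a)\notin R$. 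As in the preceding proofs $e$ must be injective, for otherwise it violates both $E$ and $N$ (violating precisely one of them would force $e$ to generate $e_E$ or $e_N$, by Lemma~\ref{lem:eN} and Corollary~\ref{lem:eE}) and hence generates a constant operation by Lemma~\ref{lem:constant}, against the hypotheses. Note also that $R$ and its complement are invariant as sets of tuples under $-$, under each $i_A$, and under $\Aut(G)$; so composing $e$ with any of these still yields an operation violating $R$, which is what permits the normalizations below.

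The step ``$e$ interpolates $-$'' of Proposition~\ref{prop:above-switch} is now replaced by the following alternative: if $e$ interpolates, modulo automorphisms, some permutation of $V$ not lying in the group generated by $\{-,\sw\}\cup\Aut(G)$, then this permutation is an endomorphism of $\Gamma$, and being a permutation preserving all relations of the $\omega$-categorical structure $\Gamma$ it is an automorphism of $\Gamma$; thus $\Aut(\Gamma)$ is a closed group properly containing the group generated by $\{-,\sw\}\cup\Aut(G)$, so by Theorem~\ref{thm:reducts} it equals $\mathrm{Sym}(V)$ and $\Gamma$ is preserved by all permutations — the second conclusion. So assume from now on that $e$ interpolates no such permutation.

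Next, apply the $n$-constant graph interpolation lemma (Lemma~\ref{lem:n-constant-interpolation}) to $\Q:=(V;E,a_1,\dots,a_n)$ and analyze the canonical behavior of $e$ exactly as in Propositions~\ref{prop:endos-weak}--\ref{prop:above-switch}. Behaving like $e_E$ or $e_N$ on or between arbitrarily large parts would make $e$ generate $e_E$ or $e_N$; behaving like the identity on one part and like $-$ on another would, together with $-$, generate $e_N$; hence, after replacing $e$ by $-e$ if necessary, we may assume that $e$ behaves like the identity on every proper part and like the identity or like $-$ between proper parts. The flip pattern between parts is then normalized by switches just as in Proposition~\ref{prop:above-switch}: if three parts carry an odd number of flips among them, then after composing with a suitable $i_A$ so that exactly one pair of parts is flipped we can delete — or add — any single edge and thus generate $e_N$ and $e_E$; otherwise the pattern is a cut and is removed by one further $i_A$. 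The flips between the constants $a_i$ and the parts are dealt with in the same way. What remains is that $e$ behaves like the identity everywhere except on the induced subgraph on $\{a_1,\dots,a_n\}$, where — since $e(a)\notin R$ and $R$ is definable in $G$ — at least one edge or non-edge is changed; realizing that pair inside an otherwise untouched finite configuration shows that $e$ generates $e_N$ or $e_E$. Every branch contradicts the hypotheses, so the assumed $e$ cannot exist.

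The main obstacle is the normalization in the last paragraph: one must carry out the successive compositions of $e$ with $-$ and with the various switches $i_A$ in an order that does not reintroduce behavior already removed — in particular, repairing the flips between the constants and the parts without disturbing the identity behavior on and between the parts — and one must check that the canonical behaviors excluded by the hypotheses leave exactly two possibilities: either $e$ normalizes to the ``identity-modulo-switches'' situation treated above, or $e$ interpolates a permutation outside the group generated by $\{-,\sw\}\cup\Aut(G)$, which is the case handled via Theorem~\ref{thm:reducts}.
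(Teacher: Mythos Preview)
Your third paragraph is essentially the argument the paper intends: with both $-$ and the switches $i_A$ available, every canonical behavior of $e$ on and between parts either yields $e_E$, $e_N$, or can be normalized away, and the remaining change on $\{a_1,\dots,a_n\}$ forces $e_N$ or $e_E$. That part is fine and matches the template of Propositions~\ref{prop:above-minus} and~\ref{prop:above-switch}.

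The problem is your second paragraph. You invoke Theorem~\ref{thm:reducts} to conclude that a closed group properly containing the group generated by $\{-,\sw\}\cup\Aut(G)$ must be all of $\mathrm{Sym}(V)$. But in this paper Theorem~\ref{thm:reducts} is \emph{derived from} Proposition~\ref{prop:to-all-permutations}: the passage immediately following the proposition explains that ``if it contains both $-$ and $\sw$, but is not generated by these operations, then it must already contain all permutations (Proposition~\ref{prop:to-all-permutations}).'' So your appeal to Theorem~\ref{thm:reducts} is circular. (There is also a smaller issue: a permutation that is an endomorphism of an $\omega$-categorical structure is not automatically an automorphism; you would need an argument for that.)

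Fortunately the second paragraph is not needed at all. Proposition~\ref{prop:interpolation} already tells you that an injective $e$ interpolates one of $\id$, $-$, $e_E$, $e_N$ modulo automorphisms; the last two are excluded by hypothesis and the first two lie in the group you already have, so there is no ``new permutation'' case to branch on. The analysis in your third paragraph then exhausts all possibilities and ends in contradiction in every branch. In fact the disjunct ``or $\Gamma$ is preserved by all permutations'' in the statement is vacuous under the hypotheses: if all permutations preserved $\Gamma$, its relations would be definable over $(V;=)$ and hence preserved by the injection $e_N$, contrary to assumption. Simply delete the second paragraph and your proof goes through.
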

\begin{proof}
The argument goes as in the preceding two propositions; we leave the
details to the reader.
\end{proof}

Theorem~\ref{thm:endos} now is a direct consequence of
Proposition~\ref{prop:endos-weak}, and
Propositions~\ref{prop:above-minus}, \ref{prop:above-switch},
\ref{prop:to-all-permutations}: If a reduct $\Gamma$ of $G$ does not
have $e_E$, $e_N$, or a constant operation as an endomorphism, and
if its endomorphisms are not generated by $\Aut(G)$,
then Proposition~\ref{prop:endos-weak} implies that it has either $\flip$  or
$\sw$ as an endomorphism. Since $\Aut(\Gamma)$ contains $\Aut(G)$,
once $\Gamma$ has $\flip$  or $\sw$ as an endomorphism, it also has its
inverse as an endomorphism; thus it has $\flip$  or $\sw$ as an
automorphism. But then by the preceding three propositions, either
$\End(\Gamma)$ is generated by $\Aut(\Gamma)$, or $\Gamma$ is
preserved by all permutations. The latter case, however, is
impossible, as this would imply that $e_E$ and $e_N$ are among its
endomorphisms, which we excluded already.

Observe also how Thomas' classification of closed permutation groups
containing $\Aut(G)$ (Theorem~\ref{thm:reducts}) follows from our
results: If a closed group properly contains $\Aut(G)$, then it contains
$\flip$  or $\sw$, by Proposition~\ref{prop:endos-weak}. If it contains $\flip$ 
but is not generated by $\flip$ , then it contains $\sw$ by
Proposition~\ref{prop:above-minus}. Similarly, if it contains $\sw$
but is not generated by $\sw$, then it contains $\flip$  by
Proposition~\ref{prop:above-switch}. If it contains both $\flip$  and
$\sw$, but is not generated by these operations, then it must
already contain all permutations
(Proposition~\ref{prop:to-all-permutations}).

\section{Producing binary injections}\label{sect:reducingArity}

Having found the minimal unary operations, we now turn to operations of higher arity. The goal of this section is proving Theorem~\ref{thm:bin-inj}, which (together with Lemma~\ref{lem:en})
implies that all minimal functions are at most binary.

\begin{defn}
We say that an operation $f \colon V^k \rightarrow V$
is \emph{essentially unary} iff there exists a unary function $g \colon  V \rightarrow V$
and $1\leq i \leq k$ such that $f(x_1,\dots,x_k)=g(x_i)$ for all
$x_1,\dots,x_k \in V$.
If $f$ is not essentially unary, we call it
\emph{essential}.
\end{defn}

\begin{theorem}\label{thm:bin-inj}
Let $f$ be an essential operation on the random graph that preserves
$E$ and $N$.
Then $f$ generates a binary injection.
\end{theorem}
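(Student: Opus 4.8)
The plan is to start from an essential operation $f\colon V^k\To V$ preserving $E$ and $N$, and first reduce to the binary case. Since $f$ is essential, there is an index, say the first, and values of the remaining arguments for which $f$ genuinely depends on the first coordinate; freezing all but two coordinates to suitable constants already yields a binary operation $g(x,y)$ generated by $f$ (via composition with constant-producing... no — constants are not available) — more carefully, one uses Corollary~\ref{cor:coloringEdgesAndNonEdgesOfG} and the interpolation machinery of Section~\ref{sect:structureInMappings} to find, for every finite subgraph, a copy on which $f$ restricted to two of its arguments behaves canonically and nontrivially in one of them. So the first step is: reduce the general essential $f$ to a binary essential operation it generates, still preserving $E$ and $N$. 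I would phrase this as: it suffices to prove the theorem for binary essential $f$, since an essential $k$-ary $f$ generates a binary essential operation (identify $k-2$ of the variables with a diagonal and argue essentiality is inherited on arbitrarily large finite sets via the partitioned-graph interpolation lemma, Lemma~\ref{lem:n-partite-interpolation}).

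So assume $f\colon V^2\To V$ is essential and preserves $E$ and $N$. The second step is to understand the behavior of $f$ on products of finite graphs using a product version of the Ramsey machinery. Color the pairs of elements of $V^2$ by what $f$ does to them — each pair $((x_1,y_1),(x_2,y_2))$ has a "type" recording whether $x_1=x_2$ or $E(x_1,x_2)$ or $N(x_1,x_2)$, likewise for the $y$'s, and whether $f$ collapses, edges, or non-edges the image pair. Since $f$ preserves $E$ and $N$, if $x_1x_2\in E$ then $f(x_1,y_1)f(x_2,y_2)$ is an edge provided it's determined; the content is in the "mixed" cases. Applying the $n$-partitioned graph Ramsey lemma (Lemma~\ref{lem:partitionedGraphRamseyLemma}) to a product structure — or rather, iterating Theorem~\ref{thm:ramsey} in both coordinates — one finds, for every finite graph $\F$, copies $\F_1,\F_2$ of $\F$ in $G$ such that $f$ restricted to $\F_1\times\F_2$ is canonical: its behavior on a pair depends only on the pair of relations (edge/non-edge/equal) in the two coordinates. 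This is the technical heart and the place where I expect the main obstacle: proving a genuine two-dimensional Ramsey statement adequate to make $f$ canonical on a product of arbitrarily large finite graphs, handling the diagonal (where $x_1=x_2$) separately. The paper defers this to Section~\ref{sect:productRamsey}, so presumably one invokes a product-Ramsey lemma proved there.

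The third step is to extract injectivity from canonicity plus essentiality. Once $f$ is canonical on $\F_1\times\F_2$ for all finite $\F$, its behavior is described by a bounded amount of combinatorial data (finitely many "behavior types"), so by the pigeonhole remark on finitely many properties, one fixed behavior type occurs on arbitrarily large products. Now I would argue: if $f$ ever collapsed two distinct elements $(x_1,y_1)\neq(x_2,y_2)$ on a large enough canonical product, then by canonicity it collapses all pairs of the same relation-type, and combined with preservation of $E$ and $N$ one derives a contradiction — either $f$ would fail to preserve $E$ or $N$ after all, or $f$ would be essentially unary (if, say, it collapses exactly when $x_1=x_2$ regardless of the $y$'s, it factors through the second coordinate; if it collapses whenever $y_1=y_2$, through the first; if it never collapses on distinct inputs it is injective). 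This case analysis, using the explicit list of relation-types and that $f$ is essential (hence does not factor through either single coordinate on arbitrarily large sets), rules out all collapsing behavior, so $f$ is injective on a cofinal family of finite products; homogeneity of $G$ then lets us move these around, and a binary injection is generated. The essentiality hypothesis is exactly what is needed to exclude the essentially-unary canonical behaviors, and preservation of $E,N$ excludes the behaviors that would add or remove edges wholesale; what remains is an injection, completing the proof.
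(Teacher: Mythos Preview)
Your approach is \emph{not} what the paper does, and it has a real gap.

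The paper's proof avoids Ramsey theory entirely at this stage. After reducing to a binary essential $f$ via Lemma~\ref{lem:binaryPoly} (a short direct argument with automorphisms, not the interpolation machinery you invoke), the paper passes to the structure $\Delta$ of all first-order definable relations preserved by $f$ and applies the purely model-theoretic Lemma~\ref{lem:intersect}: $\Delta$ has a binary injective polymorphism iff whenever $\phi\wedge x\neq y$ and $\phi\wedge s\neq t$ are satisfiable (for $\phi$ primitive positive), so is $\phi\wedge x\neq y\wedge s\neq t$. Verifying this criterion is then a hands-on case analysis on $4$-tuples $(a_1,a_2,a_3,a_4)$, $(b_1,b_2,b_3,b_4)$, finding automorphic translates $c,d$ with $f(c,d)$ having pairwise distinct entries where needed. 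No canonicity, no product Ramsey.

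Your route --- make $f$ canonical on large products, then argue that canonical $+$ essential $+$ preserves $E,N$ forces injectivity --- is a plausible alternative strategy, and the injectivity argument you sketch (collapsing a type forces either a violation of $E$/$N$ on a suitable $3$-element configuration, or essential unarity) can indeed be pushed through. But your Step~2 as written does not work: you ask for canonicity in the \emph{unordered} sense (``depends only on the pair of relations edge/non-edge/equal in the two coordinates''), and the paper explicitly shows right after Lemma~\ref{lem:OGPRL} that the product Ramsey statement \emph{fails} without a linear order on the factors. The product-Ramsey lemma you want to cite from Section~\ref{sect:productRamsey} is for \emph{ordered} graph products; to use it you must first equip $V$ with an order making $(V;E,\prec)$ the random ordered graph, obtain canonicity relative to that richer language, and then carry the order through your collapsing case analysis (more types, but the same transitivity-of-equality contradictions still fire). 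Without that, Step~2 is a genuine gap, not just vagueness.

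In short: the paper's argument is algebraic (intersection-closed relations) and elementary; yours is Ramsey-theoretic and would need the ordered product lemma plus a fuller case analysis to be complete.
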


%The following lemma says that minimal essential operations always preserve $E$ and $N$; thus, by the preceding theorem, all such operations are binary.

\begin{lem}\label{lem:en}
    Minimal essential operations on the random graph must preserve $E$ and $N$.
\end{lem}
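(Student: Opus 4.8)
The plan is to prove the contrapositive: if an essential operation $f$ on the random graph fails to preserve $E$ or fails to preserve $N$, then $f$ is not minimal. First I would dispose of the case where $f$ is not injective: by the same collapsing arguments as in Lemma~\ref{lem:constant} (applied in one fixed coordinate while freezing the others as constants, which is legitimate since $f$ is essential and therefore depends non-trivially on some variable), $f$ generates a constant unary operation, hence $f$ generates a non-trivial operation of strictly smaller arity that does not generate $f$ back — so $f$ is not minimal. Thus I may assume $f$ is injective. The remaining case is that $f$ is an injective essential operation violating, say, $E$ (the case of violating $N$ is dual via composition with $-$).

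Next I would exploit essentiality to extract a unary map witnessing the bad behavior in a single argument. Since $f$ depends on at least two of its arguments, there is a coordinate $i$ and a tuple $\bar a$ such that the unary map $g(x) := f(a_1,\dots,a_{i-1},x,a_{i+1},\dots,a_k)$ together with the information in the other coordinates reveals the $E$-violation; more carefully, I would argue that for some choice of values in the ``frozen'' coordinates, the resulting unary operation $g$ generated by $f$ (via composition with constants and projections, then automorphisms) is itself injective and violates $E$ while either preserving $N$ or violating it. If $g$ preserves $N$ but not $E$, then by Lemma~\ref{lem:eN} $g$ generates $e_N$, so $f$ generates $e_N$, which is a non-trivial unary operation of arity $1 < \operatorname{arity}(f)$ that does not generate $f$ (since $e_N$ preserves neither... wait, $e_N$ preserves $N$; but $f$ does not preserve $E$ — one checks $e_N$ cannot generate a function violating $N$ differently, and in any case $f$ need not be generated by $e_N$), contradicting minimality. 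If instead $g$ violates both $E$ and $N$, then since $g$ is injective, Proposition~\ref{prop:interpolation} gives that $g$ interpolates one of $\id, e_E, e_N, -$, or a constant modulo automorphisms; running through the cases (as in the proof of Proposition~\ref{prop:endos-weak}) shows $g$ generates one of $e_E$, $e_N$, $\sw$, or a constant, again a non-trivial operation of arity at most $1$, contradicting minimality of $f$ unless that operation generates $f$ back, which it cannot since these are unary and $f$ is essential (an essential operation of arity $\geq 2$ is never generated by a unary one).

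The only subtlety, and the step I expect to be the main obstacle, is the extraction in the previous paragraph: I must guarantee that after freezing all but one coordinate of $f$ to suitable constants I land on a unary operation that \emph{still} witnesses a violation of $E$ (or $N$). A priori, $f$ might preserve $E$ on every ``line'' (fixing all but one coordinate) while still globally violating $E$ — for instance $f$ could map a pair of edges $(e_1, e_2)$ to a non-edge where $e_1$ lives in coordinate $1$ and $e_2$ in coordinate $2$ with the two pairs of endpoints distinct. To handle this I would split into two subcases. If $f$ violates $E$ already on some line, the extraction above goes through directly. Otherwise $f$ preserves $E$ on all lines but violates $E$ ``diagonally''; here I would instead color pairs (or small configurations) of the relevant power $V^{\dots}$ — but at this point I can simply observe that an operation preserving $E$ and $N$ on every line is exactly what Theorem~\ref{thm:bin-inj} (together with the reduction to binary injections) is set up to analyze, and a genuinely diagonal $E$-violation makes $f$ ``even more essential'', so $f$ certainly generates an essential binary operation; applying Theorem~\ref{thm:bin-inj} (whose hypothesis ``preserves $E$ and $N$'' must be checked or circumvented) and then the classification of minimal binary injections in Section~\ref{sect:minimalBinary} shows $f$ generates a binary injection that is not equivalent to $f$ unless $f$ was already binary and of one of the canonical types — but canonical binary injections of those types preserve $E$ and $N$, contradicting our assumption. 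Thus in every case the assumption that a minimal essential $f$ violates $E$ or $N$ is untenable, which proves the lemma.
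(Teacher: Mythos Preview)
Your approach has a genuine gap and misses the simple key idea. The operation ``freeze all but one coordinate to constants'' is \emph{not} available in the clone generated by $f$ over $\Aut(G)$: constant unary maps are not automorphisms and are not in the topological closure of $\Aut(G)$, so the unary function $g(x)=f(a_1,\dots,a_{i-1},x,a_{i+1},\dots,a_k)$ is in general \emph{not} generated by $f$. Your whole extraction step (and the non-injective case via Lemma~\ref{lem:constant}) relies on this illegitimate move. Your fallback in the ``diagonal'' subcase is also circular: Theorem~\ref{thm:bin-inj} explicitly assumes $f$ preserves $E$ and $N$, and the classification in Section~\ref{sect:minimalBinary} is proved \emph{after} and \emph{using} Lemma~\ref{lem:en}.

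The paper's proof is a two-line direct argument that sidesteps all of this. Recall what ``$f$ preserves $E$'' means for an $n$-ary operation: whenever $E(x_i,y_i)$ holds for \emph{every} $i$, then $E(f(\bar x),f(\bar y))$. So a violation hands you tuples $(x_1,\dots,x_n)$, $(y_1,\dots,y_n)$ with $E(x_i,y_i)$ for all $i$ and $\neg E(f(\bar x),f(\bar y))$. Fix any edge $(u,v)$ and, by homogeneity, automorphisms $\alpha_i$ with $\alpha_i(u)=x_i$, $\alpha_i(v)=y_i$. Then $g(x):=f(\alpha_1(x),\dots,\alpha_n(x))$ is a \emph{legitimate} unary operation generated by $f$ (composition with automorphisms only), and $g(u),g(v)$ witness that $g$ violates $E$, so $g$ is non-trivial. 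A unary function cannot generate an essential one, hence $f$ is not minimal. No injectivity case split, no appeal to later results, no constants.
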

\begin{proof}
    Suppose an essential function $f \colon V^n\To V$ does not preserve $E$ (the argument for $N$ is dual). Then there exist tuples $(x_1,\ldots,x_n), (y_1,\ldots,y_n)$ such that $(x_i,y_i) \in E$ for all $1\leq i\leq n$ and such that $f(x_1,\ldots,x_n)$ and $f(y_1,\ldots,y_n)$ are not connected by an edge. Fix $(u,v) \in E$, and choose automorphisms $\alpha_i$ such that $\alpha_i(u)=x_i$ and $\alpha_i(v)=y_i$, for all $1\leq i\leq n$. The function $g(x):=f(\alpha_1(x),\ldots,\alpha_n(x))$ is unary and violates $E$; hence it is non-trivial. But being unary, it cannot generate the essential function $f$, proving that $f$ is not minimal.
\end{proof}

The following lemma allows us to work with binary operations; its proof is very similar to the proof of a corresponding lemma in~\cite{ecsps}.

\begin{lem}\label{lem:binaryPoly}
Let $f \colon V^k \rightarrow V$ be an essential operation. Then
$f$ generates a binary essential operation.
\end{lem}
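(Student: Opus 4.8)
The statement to prove is Lemma~\ref{lem:binaryPoly}: an essential operation $f\colon V^k\to V$ generates a binary essential operation.

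\medskip

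The plan is to reduce the arity one step at a time: I would show that from any essential operation of arity $k\geq 3$ one can produce, by identifying variables, an essential operation of arity $k-1$, and then iterate down to arity $2$. Since ``generates'' is transitive and identifying variables is a special case of composition (with projections), it suffices to find, for a given essential $f\colon V^k\to V$ with $k\geq 3$, two indices $i\neq j$ such that the operation $f'$ obtained from $f$ by substituting the same variable for the $i$-th and $j$-th arguments is still essential. Recall that $f$ being essential means it depends, in a genuine (non-dummy) way, on at least two of its coordinates; more precisely, there is no single coordinate $i$ and unary $g$ with $f(x_1,\dots,x_k)=g(x_i)$ identically.

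\medskip

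The key combinatorial step is the following dichotomy, which is where I expect most of the work to go. Call a coordinate $i$ of an operation $h$ \emph{relevant} if $h$ does not factor through the projection away from the $i$-th coordinate, i.e.\ there exist two input tuples differing only in position $i$ on which $h$ takes different values. An operation is essential iff it has at least two relevant coordinates (a unary operation is considered to have its unique coordinate relevant unless it is trivial; here one must be slightly careful about operations with a dummy variable, but the random graph has no constants issues of concern since we may always pad). Now fix $f\colon V^k\to V$ essential, $k\geq 3$. Pick two relevant coordinates, say $1$ and $2$. I would consider identifying coordinates $1$ and $3$ (say), forming $f'(x_1,x_2,x_4,\dots,x_k):=f(x_1,x_2,x_1,x_4,\dots,x_k)$. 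If $f'$ is essential, we are done with this step. The danger is that $f'$ becomes essentially unary. One then argues that this cannot happen for all choices of the coordinate to be merged into coordinate $1$ (or into coordinate $2$): if merging coordinate $3$ into coordinate $1$ kills essentiality, the resulting $f'$ depends only on one coordinate, which — because coordinate $2$ was relevant in $f$ — must be (the merged) coordinate that includes $2$ or is $2$ itself; a short case analysis using the witnesses of relevance of coordinates $1$ and $2$ then forces enough structure that merging a \emph{different} coordinate, or merging into coordinate $2$ instead, preserves essentiality. Since $k\geq 3$ there is always a third coordinate available to absorb, giving enough room for this pigeonhole-style argument. This is essentially the standard argument that in clone theory an essential operation of arity $\geq 3$ has an essential ``minor'' of arity one less, adapted to our setting where the base set carries the graph $\Aut(G)$ but that structure plays no role in this particular lemma — only the abstract function matters, exactly as in the cited analogous lemma from~\cite{ecsps}.

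\medskip

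The main obstacle, then, is the bookkeeping in that case analysis: ruling out the pathological possibility that \emph{every} way of decreasing the arity by identifying a pair of variables produces an essentially unary function. I would handle it by fixing witnesses $\bar a,\bar a'$ (differing only in coordinate $1$) and $\bar b,\bar b'$ (differing only in coordinate $2$) to the relevance of coordinates $1$ and $2$, and then tracking which identifications keep at least one of these witness-pairs ``alive'' (i.e.\ still differing, after the substitution, in a coordinate on which the resulting function provably varies). Because $k\geq 3$, some coordinate $\ell\notin\{1,2\}$ exists, and identifying $\ell$ with $1$ leaves $\bar b,\bar b'$ still differing only in coordinate $2$ and still mapped to different values (the substitution does not touch coordinate $2$ and does not touch the fact that the two inputs agree outside coordinate $2$), so $f'$ is relevant in coordinate $2$; symmetrically one gets relevance in the merged coordinate $\{1,\ell\}$ from $\bar a,\bar a'$ — \emph{unless} $a_1,a'_1$ happen to interact badly with the substituted copy in position $\ell$, which can be circumvented by instead choosing fresh witnesses via $\omega$-categoricity / homogeneity of $G$ to move the relevant tuples into general position. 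Once two relevant coordinates of $f'$ are exhibited, $f'$ is essential of arity $k-1$, and induction finishes the proof.
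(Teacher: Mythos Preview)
Your inductive approach has a genuine gap. The step you propose --- that for any essential $f$ of arity $k\geq 3$, \emph{some} identification of two variables yields an essential operation of arity $k-1$ --- is simply false at $k=3$: any ternary near-unanimity operation $m$ (one satisfying $m(x,x,y)=m(x,y,x)=m(y,x,x)=x$) is essential, yet every identification of two of its variables produces a projection. Such operations exist on $V$. Your bookkeeping also contains a concrete slip: after identifying coordinate $\ell$ with coordinate $1$, you cannot feed the witnesses $\bar b,\bar b'$ into $f'$ and expect to recover the values $f(\bar b),f(\bar b')$, since $f'$ evaluates $f$ with $b_1$ (not $b_\ell$) in position $\ell$. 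Your closing hedge about using homogeneity to ``move the relevant tuples into general position'' does not touch the near-unanimity obstruction, which is about the \emph{function} $f'$ collapsing to a unary map, not about witness selection.

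Relatedly, your assertion that ``the structure plays no role in this particular lemma'' is wrong, and this is exactly where the paper's argument differs from yours. The paper does not iterate down by one; it passes directly from arity $k$ to arity $2$ by setting $g(x,y):=f(x,y,\alpha_3(y),\dots,\alpha_k(y))$ for carefully chosen automorphisms $\alpha_i\in\Aut(G)$. The choice of the $\alpha_i$ (so that $g$ demonstrably depends on both variables) is made via a two-case analysis that uses the edge relation $E$ and, in the second case, the extension property of $G$ to produce auxiliary vertices adjacent to two prescribed vertices simultaneously. It is precisely the freedom to compose with nontrivial automorphisms --- rather than merely identify variables --- that sidesteps the near-unanimity obstruction; the analogous lemma in~\cite{ecsps} is proved the same way, there using the full symmetric group.
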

\begin{proof}
Assume without loss of generality that $f$ depends all of its arguments
and is at least ternary. In particular, there are $a_1,\dots,a_k$ and $a_1'$ such that
$f(a_1,\dots,a_k) \neq f(a'_1,a_2,\dots,a_k)$.
We distinguish two cases.

\textbf{Case 1.} There are $b_1,\dots,b_k$ such that
$(b_i,a_i) \in E$ for $2 \leq i \leq k$ and $f(b_1,a_2,\dots,a_k) \neq
f(b_1,\dots,b_k)$. By the homogeneity of $G$ we can find
automorphisms $\alpha_3,\dots,\alpha_k$ such that
$\alpha_i(a_2)=a_i$ and $\alpha_i(b_2) =b_i$. Using these
automorphisms we define
$$g(x,y):=
f(x,y,\alpha_3(y),\dots,\alpha_k(y))\; ,$$
which clearly depends on both
arguments.

\textbf{Case 2.} For all $b_1,\dots,b_k$, if $(a_i,b_i) \in E$ for $2
\leq i \leq k$, then $f(b_1,a_2,\dots,a_k)=f(b_1,b_2,\dots,b_k)$.
Since $f$ depends on its second coordinate, there are
$c_1,\dots,c_k$ and $c_2'$ such that
$$f(c_1,c_2,c_3,\dots,c_k) \neq f(c_1,c_2',c_3,\dots,c_k)\; .$$
Then $f(c_1,a_2,\dots,a_k)$ can be
equal to either $f(c_1,c_2,c_3,\dots,c_k)$, or to
$f(c_1,c_2',c_3,\dots,c_k)$, but not to both. We assume without loss
of generality that $f(c_1,a_2,\dots,a_k) \neq
f(c_1,c_2,c_3,\dots,c_k)$. From the extension property of the random graph
we see that we can choose $d_2,\dots,d_k$ such that $(d_i,a_i) \in E$ and
$(d_i,c_i) \in E$ for $2 \leq i \leq k$. Since $G$ is homogeneous
there are automorphisms $\alpha_3,\dots,\alpha_k$ of $G$ such that
$\alpha_i(c_2)=c_i$ and $\alpha_i(d_2)=d_i$. We claim that the
operation $g$ defined by
$$ g(x,y) := f(x,y,\alpha_3(y),\dots,\alpha_k(y))$$
depends on both arguments.
Indeed, we know
that $g(a_1,d_2)=f(a_1,d_2,\dots,d_k)=f(a_1,\dots,a_k)$, and that
$f(a'_1,d_2)=f(a_1',d_2,\dots,d_k)=f(a'_1,a_2,\dots,a_k)$. By the
choice of the values $a_1,\dots,a_k$ and $a_1'$ these two values
are distinct, and we have that $g$ depends on the first argument.
For the second argument, note that $g(c_1,d_2) = f(c_1,d_2,\dots,d_k) = f(c_1,a_2,\dots,a_k)$ and that $g(c_1,c_2) = f(c_1,c_2,\dots,c_k)$.
Because $f(c_1,a_2,\dots,a_k)$ and $f(c_1,c_2,\dots,c_k)$ are distinct,
we have that $g$ also depends on the second argument.
\end{proof}

We are left with the task of producing a binary injection from a binary essential function preserving $E$ and $N$. This will be prepared in the general Lemma~\ref{lem:intersect}.

\begin{definition}
A relation $R\subseteq X^k$ is called \emph{intersection-closed}
iff for all $(u_1,\ldots,u_k), (v_1,\ldots,v_k) \in R$
there is a tuple $(w_1,\ldots,w_k) \in R$
such that for all $1 \leq i,j \leq k$ we have
$w_i \neq w_j$ whenever $u_i \neq u_j$ or $v_i \neq v_j$.
\end{definition}

\begin{lem}\label{lem:intersect}
Let $\Gamma$ be a countable $\omega$-categorical structure
where $\neq$ is primitive positive definable.
Then the following are equivalent.
\begin{enumerate}
\item If $\phi$ is a primitive positive formula such that
both $\phi \wedge x \neq y$ and $\phi \wedge u \neq v$ are
satisfiable over $\Gamma$, then $\phi \wedge x \neq y \wedge u \neq
v$ is satisfiable over $\Gamma$ as well.
\item Every finite induced substructure of $\Gamma^2$ admits an
injective homomorphism into $\Gamma$.
\item $\Gamma$ is preserved by a binary injective operation.
\item All primitive positive definable relations in $\Gamma$ are intersection-closed.
\end{enumerate}
\end{lem}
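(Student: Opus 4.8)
The plan is to establish the cycle of implications $(1)\Rightarrow(2)\Rightarrow(3)\Rightarrow(4)\Rightarrow(1)$. Two of the four links are essentially immediate and use only that polymorphisms preserve primitive positive definable relations; one is a short combinatorial argument driven by $(1)$; and the remaining link $(2)\Rightarrow(3)$ is where the $\omega$-categoricity of $\Gamma$ enters, and is the step I expect to require the most care.

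For $(3)\Rightarrow(4)$: if $f\colon V^2\To V$ is a binary injective operation preserving $\Gamma$, then $f$ preserves every primitive positive definable relation $R$. Given $(u_1,\dots,u_k),(v_1,\dots,v_k)\in R$, put $w_i:=f(u_i,v_i)$; then $(w_1,\dots,w_k)\in R$, and if $u_i\neq u_j$ or $v_i\neq v_j$ then $(u_i,v_i)\neq(u_j,v_j)$, whence $w_i\neq w_j$ by injectivity of $f$. For $(4)\Rightarrow(1)$: given a primitive positive $\phi$, after adjoining dummy free variables we may assume $x,y,u,v$ occur among its free variables, so that $\phi$ defines a primitive positive relation $R$, which by $(4)$ is intersection-closed. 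Satisfiability of $\phi\wedge x\neq y$ yields $\bar a\in R$ with $a_x\neq a_y$, and satisfiability of $\phi\wedge u\neq v$ yields $\bar b\in R$ with $b_u\neq b_v$; intersection-closedness then provides $\bar w\in R$ with $w_i\neq w_j$ whenever $a_i\neq a_j$ or $b_i\neq b_j$, in particular $w_x\neq w_y$ and $w_u\neq w_v$, so $\bar w$ satisfies $\phi\wedge x\neq y\wedge u\neq v$.

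For $(1)\Rightarrow(2)$: let $S$ be a finite induced substructure of $\Gamma^2$ with point set $\{p_1,\dots,p_n\}$, $p_i=(a_i,b_i)$, and let $\phi(y_1,\dots,y_n)$ be the conjunction of all atomic formulas $R(y_{i_1},\dots,y_{i_m})$ holding in $S$ of the points $p_{i_1},\dots,p_{i_m}$; this is a quantifier-free primitive positive formula, and an injective homomorphism $S\To\Gamma$ is exactly an injective satisfying assignment of $\phi\wedge\bigwedge_{i\neq j}y_i\neq y_j$. Each of the two coordinate projections is a homomorphism $S\To\Gamma$, so $\bar a$ and $\bar b$ both satisfy $\phi$; and since the $p_i$ are pairwise distinct, for each $i\neq j$ we have $a_i\neq a_j$ or $b_i\neq b_j$, so one of $\bar a,\bar b$ witnesses that $\phi\wedge y_i\neq y_j$ is satisfiable. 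It remains to check that a primitive positive $\psi$ for which $\psi\wedge\delta_s$ is satisfiable for each of finitely many disequations $\delta_1,\dots,\delta_r$ must have $\psi\wedge\delta_1\wedge\dots\wedge\delta_r$ satisfiable; this follows by induction on $r$, the step passing from $\psi':=\psi\wedge\delta_r$ (again primitive positive) and $\delta_1,\dots,\delta_{r-1}$, where $\psi'\wedge\delta_s$ is satisfiable for $s\le r-1$ by one application of $(1)$ to $\psi,\delta_s,\delta_r$.

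Finally, for $(2)\Rightarrow(3)$ --- the crux --- I would glue the local injective homomorphisms supplied by $(2)$ into a single injective homomorphism $h\colon V^2\To V$, which is by definition a binary injective polymorphism of $\Gamma$. Enumerate the domain $V^2$ of $\Gamma^2$ as $q_1,q_2,\dots$ and consider the tree whose level-$n$ vertices are the $\Aut(\Gamma)$-orbits of tuples $(g(q_1),\dots,g(q_n))$, where $g$ ranges over injective homomorphisms from the substructure of $\Gamma^2$ induced on $\{q_1,\dots,q_n\}$ into $\Gamma$, with a level-$(n+1)$ orbit lying above a level-$n$ orbit when it restricts to it. By $\omega$-categoricity each level is finite, by $(2)$ each level is non-empty, and restricting an injective homomorphism shows every vertex has a predecessor; hence by K\"onig's lemma there is an infinite branch, and walking up it while successively correcting representatives by automorphisms of $\Gamma$ produces a coherent chain of injective homomorphisms whose union is the desired $h$. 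The main obstacle here is running this $\omega$-categoricity/compactness argument carefully --- in particular, noting that injectivity is not a relation of $\Gamma^2$ and must be tracked directly (equivalently, by passing to the expansions of $\Gamma$ and $\Gamma^2$ by their genuine disequality relations, which preserves $\omega$-categoricity).
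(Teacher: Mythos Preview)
Your proof is correct and follows essentially the same route as the paper: the same cycle $(3)\Rightarrow(4)\Rightarrow(1)\Rightarrow(2)\Rightarrow(3)$, with the $(1)\Rightarrow(2)$ step handled by the same induction on the number of disequalities (the paper peels off two disequalities at a time rather than one, but this is cosmetic), and $(2)\Rightarrow(3)$ by the same K\"onig's lemma argument on the tree of $\Aut(\Gamma)$-orbits of partial injective homomorphisms. The only superficial difference is that the paper enlarges $S$ to a square $\{e_1,\dots,e_n\}^2$ before writing down $\phi$, whereas you work directly with an arbitrary finite $S\subseteq\Gamma^2$; both setups lead to the same argument.
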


We would like to remark that the first item in Lemma~\ref{lem:intersect} is inspired from joint work of the alphabetically first author with
Peter Jonsson and Timo von Oertzen in~\cite{HornOrFull}.

\begin{proof}
Throughout the proof,
let  $e_1,e_2, \dots$ be an enumeration
of the domain $D$ of $\Gamma$.
If $f$ is a binary injective polymorphism of $\Gamma$,
then clearly every relation in $\Gamma$ is intersection-closed,
so (3) implies (4).
The implication from (4) to (1) is straightforward as well.

We now show the implication from (1) to (2). Let $S$ be a finite induced
substructure of $\Gamma^2$. Without loss of generality we can
assume that $S$ is induced in $\Gamma^2$ by a set of the form
$\{e_1,\dots,e_n\}^2$, for sufficiently large $n$.
Consider the formula $\phi$ whose variables $x_1,\dots,x_{n^2}$ are the elements of $S$,
$$x_1 := (e_1,e_1),\dots, x_n := (e_1,e_n), \dots, x_{n^2-n+1} := (e_n,e_1) ,\dots, x_{n^2} :=(e_n,e_n) \; ,$$
and which is the conjunction over all
literals $R((e_{i_1},e_{j_1}),\dots,(e_{i_k},e_{j_k}))$ such that
$R(e_{i_1},\dots,e_{i_k})$ and $R(e_{j_1},\dots,e_{j_k})$ hold in $\Gamma$. So $\phi$ states precisely which relations hold in $S$.

Using induction
over the number of inequalities, we will now show that for any
conjunction $\sigma:=\bigwedge_{1\leq k\leq m} x_{i_k}\neq
x_{j_k}$ with the property that $i_k\neq j_k$ for all $1\leq k\leq
m$, the formula $\phi\wedge \sigma$ is satisfiable over $\Gamma$. This implies that there exists an $n^2$-tuple $t$ in $\Gamma$ with pairwise distinct entries
which satisfies $\phi$; the assignment that sends every $x_i\in S$ to $t_i$ is an injective homomorphism from $S$ into $\Gamma$.

For the induction beginning, let $x_i\neq x_j$ be any inequality. Let $r,s$ be the $n^2$-tuples defined as follows.
\begin{align*}
r & := (e_1,\dots,e_1,e_2,\dots,e_2,\dots,e_n,\dots,e_n) \\
s & := (e_1,e_2,\dots,e_n,e_1,e_2,\dots,e_n,\dots,e_1,e_2,\dots,e_n).
\end{align*}
These two tuples satisfy $\phi$, because the projections to
the first and second coordinate, respectively, are homomorphisms
from $S$ to $\Gamma$. Now either $r$ or $s$ satisfies $x_i\neq x_j$, proving that $\phi\wedge x_i\neq x_j$ is satisfiable in $\Gamma$.

In the induction step, let a conjunction $\sigma:=\bigwedge_{1\leq k\leq m} x_{i_k}\neq
x_{j_k}$ be given, where $m\geq 2$. Set $\sigma':=\bigwedge_{3\leq k\leq m} x_{i_k}\neq
x_{j_k}$, and $\phi':=\phi\wedge \sigma'$. Observe that $\phi'$ has a primitive positive definition in $\Gamma$, as $\phi$ and $\neq$ have such definitions. By induction hypothesis, both $\phi'\wedge x_{i_1}\neq x_{j_1}$ and $\phi'\wedge x_{i_2}\neq x_{j_2}$ are satisfiable in $\Gamma$. But then $\phi'\wedge x_{i_1}\neq x_{j_1}\wedge x_{i_2}\neq x_{j_2}=\phi\wedge\sigma$ is satisfiable over $\Gamma$ as well by (1), concluding the proof.

The implication from (2) to (3) is by a standard application of
K\"onig's lemma. This is because the fact that $\Gamma$ is $\omega$-categorical implies that for every $n\geq 1$, there are only
finitely many ``behaviors'' of functions from $\{e_1,\ldots,e_n\}^2$ to $\Gamma$, by the theorem of Ryll-Nardzewski.

We give the standard argument for completeness.
We say that two homomorphisms $f_1, f_2$ from the structure induced by a set $\{e_1,\dots,e_l\}^2$ in $\Gamma$ to $\Gamma$ are \emph{equivalent} if there is an automorphism
$h$ of $\Gamma$ such that $h(f_1(x,y))=f_2(x,y)$ for all $x,y \in \{e_1,\dots,e_l\}$.
Consider the infinite tree
$T$ whose vertices are the equivalence classes of injective homomorphisms from structures induced by a set of the form
$\{e_1, \dots, e_l\}^2$ to $\Gamma$.
There is an arc from one
equivalence class of injective
homomorphisms to another in $T$ iff there are
representatives $f_1$ and $f_2$ of the two classes such that the
domain of $f_1$ is $\{e_1,\dots,e_l\}^2$, and the domain of
$f_2$ is $\{e_1,\dots,e_l,e_{l+1}\}^2$, and $f_2$ is an extension of $f_1$.

The theorem of Ryll-Nardzewski implies that every node in $T$ has a finite number of
outgoing arcs, since there are only finitely many inequivalent homomorphisms from a set $\{e_1,\ldots,e_l\}^2$ to $\Gamma$.
Since $T$ is infinite by (2), K\"onig's lemma asserts the existence of an
infinite branch $B$ in $T$.
This infinite branch gives rise to an injective binary polymorphism $f$ of $\Gamma$, which is defined inductively as follows. The restriction of $f$ to $\{e_1,\dots,e_n\}$
will be an element from the $n$-th node of $B$. To start the induction, pick any function $f_1$ from the first node of $B$; $f_1$ has domain $\{e_1\}^2$. Set $f$ to equal $f_1$ on $\{e_1\}^2$.
Suppose $f$ is already defined on $e_1,\dots,e_n$, for $n\geq 1$. By
definition of $T$, we find representatives $f_n$ and
$f_{n+1}$ of the $n$-th and the $n+1$-st element of $B$ such
that $f_n$ is a restriction of $f_{n+1}$. The inductive assumption
gives us an automorphism $h$ of $\Gamma$ such that $h(f_n(x,y))=f(x,y)$
for all $x,y\in \{e_1,\dots,e_n\}$. We set $f(x,y)$ to be
$h(f_{n+1}(x,y))$, for all $x,y\in \{e_1,\dots,e_{n+1}\}$. The restriction of $f$ to $e_1,\dots,e_{n+1}$
will therefore be a member of the $n+1$-st node of $B$.
The operation $f$ defined in this way is indeed an injective homomorphism from $\Gamma^2$ to $\Gamma$, and we are done.
\end{proof}

We are now ready to end this section and provide a proof of Theorem~\ref{thm:bin-inj}.

\begin{proof}[Proof of Theorem~\ref{thm:bin-inj}]
Let an essential operation $f \colon V^k\To V$ preserving $E$ and $N$ be given. By Lemma~\ref{lem:binaryPoly}, $f$ generates a binary essential function; clearly, this function still preserves $E$ and $N$, so that we may henceforth assume that $f$ is itself binary.

Consider the structure $\Delta$ whose relations are the relations that are first-order definable in $G$ and preserved by $f$. In order to prove that $f$ generates a
binary injection, we will refer to Proposition~\ref{prop:galois} and prove that there is a binary injection
preserving $\Delta$.

By its definition, $\Delta$ has $E$ and $N$ amongst its relations. We claim that $\neq$ is also among the relations of $\Delta$: This is because
 $x\neq y $ iff
$\exists z \, (E(x,z) \wedge N(y,z))$, so $\neq$ has a primitive positive definition
from $E$ and $N$, and hence from $\Delta$. Hence, we may apply
Lemma~\ref{lem:intersect} to $\Delta$, and in order
to show that $\Delta$ is preserved by a binary injection, it
suffices to show that if $\phi$ is a primitive positive formula over
$\Delta$ such that both $\phi \wedge x \neq y$ and $\phi \wedge s
\neq t$ are satisfiable over $\Delta$, then $\phi \wedge x \neq y
\wedge s \neq t$ is satisfiable over $\Delta$ as well.

To this end, let $\phi$ be a primitive positive formula
over the signature of $\Delta$ such that
\begin{itemize}
\item there is a tuple $t_1$ that satisfies
$\phi \wedge x \neq y$
\item there is a tuple $t_2$ that satisfies
$\phi \wedge s \neq t$.
\end{itemize}
Let $a_1,a_2,a_3,a_4$ and $b_1,b_2,b_3,b_4$ be the values for
$x,y,s,t$ in $t_1$ and $t_2$, respectively. We have $a_1\neq a_2$
and $b_3\neq b_4$. We want to show that $\phi \wedge x \neq y \wedge
s \neq t$ is satisfiable over $\Delta$. Thus, if $a_3\neq a_4$ or
$b_1\neq b_2$, there is nothing to show, and so we assume that
$a_3=a_4$ and $b_1= b_2$.

We claim that there are automorphisms $\alpha,\beta$ of $G$ such
that in the tuple $t_3 := f(\alpha(t_1),\beta(t_2))$ the value of
$x$ is different from the value of $y$, and the value of $s$ is
different from the value of $t$. Then, since $f$ preserves $\Delta$,
the tuple $t_3$ shows that $\phi \wedge x \neq y \wedge s \neq t$ is
satisfiable over $\Delta$, and concludes the proof.

To prove the claim, we will find tuples $c:=(c_1,c_2,c_3,c_4)$ and
$d:=(d_1,d_2,d_3,d_4)$ of the same type as $(a_1,a_2,a_3,a_4)$ and
$(b_1,b_2,b_3,b_4)$, respectively, such that the tuple $e:=f(c,d)$
satisfies $e_1\neq e_2$ and $e_3\neq e_4$. Then, by the homogeneity
of $G$, we can find automorphisms $\alpha$ and $\beta$ of $G$
sending $a$ to $c$ and $b$ to $d$, which suffices for the prove of
our claim.

In the sequel, we will assume that $a_1,a_2\in X$ and $b_3,b_4\in Y$,
where $X,Y\in\{E,N\}$.\\

\textbf{Case 1.} Suppose first that $a_3=a_4\in\{a_1,a_2\}$ and
$b_1=b_2\in\{b_3,b_4\}$; without loss of generality $a_3=a_2$ and $b_1=b_3$.

\textbf{Case 1.1} There exists $u\in V$ such that for all $p,v\in V$
with $(u,v) \in Y$ we have $f(p,u)=f(p,v)$. Then, because $f$ preserves
$\neq$, we have $f(p,u) \neq f(q,u)$ for all $p\neq q$. Since $f$ is
essential there are $p,v\in V$ such that $f(p,u)\neq f(p,v)$: for if $f(p,u)=f(p,v)$ for all $p,v\in V$, then $f(p,v)=f(p,v')$ for all $p,v,v'\in V$, and hence $f$ is not essential. 
Pick
$w\in V$ such that $(w,u), (w,v) \in Y$. Pick moreover $q\in V$ such that
$(p,q)\in X$. We have $f(p,v)\neq f(p,u)=f(p,w)$. Moreover,
$f(p,w)=f(p,u)\neq f(q,u)=f(q,w)$. Hence, the tuples $c:=(q,p,p,p)$
and $d:=(w,w,w,v)$ prove the claim.

\textbf{Case 1.2} For all $u\in V$  there exist $p,v\in V$ with
$(u,v) \in Y$ such that $f(p,u) \neq f(p,v)$. Pick $m,n,u \in V$ with
$(m,n) \in X$ and $f(m,u) \neq f(n,u)$. Pick $p,v\in V$ such that $(u,v) \in
Y$ and $f(p,u) \neq f(p,v)$. If we can pick $p$ in such a way that
$(p,m), (p,n) \in X$, then since either $f(m,u)\neq f(p,u)$ or $f(n,u) \neq
f(p,u)$ we have that either $(m,p,p,p)$ or $(n,p,p,p)$ proves the
claim together with the tuple $(u,u,u,v)$. So suppose that this is
impossible. Then for any $q\in V$ with $(q,m), (q,n) \in X$ we have
$f(q,u)=f(q,v)\neq f(p,u)$, so we have that $(q,p,p,p)$ and
$(u,u,u,v)$ satisfy the claim.\\

\textbf{Case 2.} Now suppose that $a_3=a_4\in\{a_1,a_2\}$ and
$b_1=b_2\notin\{b_3,b_4\}$; without loss of generality $a_3=a_2$. Write $(b_1,b_3)\in Q_3$
and $(b_1,b_4)\in Q_4$, where $Q_3,Q_4\in\{E,N\}$.

\textbf{Case 2.1} There exists $u\in V$ such that for all $p,v,r$
with $vr\in Y$, $(u,v) \in Q_3$ and $(u,r) \in Q_4$ we have $f(p,v)=f(p,r)$.
Then one easily concludes that for all $p\in V$ and all $v,v'\in V$
with $v,v'\neq u$ we have $f(p,v)=f(p,v')$. This implies that
$f(p,v)\neq f(q,v)$ whenever $p\neq q$ and $v\neq u$. Since $f$ is
essential, there exist $p,v\in V$ with $(u,v) \in Y$ such that
$f(p,u)\neq f(p,v)$. Now pick $w,q\in V$ such that $(w,u) \in Q_3$,
$(w,v) \in Q_4$, and $(q,p) \in X$. Then $f(p,w)\neq f(q,w)$, and so the
tuples $(q,p,p,p)$ and $(w,w,u,v)$ prove the claim.

\textbf{Case 2.2} For all $u$ there exist $p,v,r$ with $(v,r) \in Y$,
$(u,v) \in Q_3$, $(u,r) \in Q_4$ and $f(p,v)\neq f(p,r)$. Pick $m,n,u$ with
$(m,n)\in X$ and $f(m,u)\neq f(n,u)$. Pick $p,v,r\in V$ such that
$(v,r)\in Y$, $(u,v) \in Q_3$, $(u,r) \in Q_4$ and $f(p,v)\neq f(p,r)$. If we
can pick $p$ in such a way that $(p,m), (p,n) \in X$, then either
$(m,p,p,p)$ and $(u,u,v,r)$ or $(n,p,p,p)$ and $(u,u,v,r)$ prove the
claim. So suppose that this is impossible. Then for any $q$ with
$(q,m), (q,n)\in X$ and all $v,r\in V$ with $(v,r)\in Y$, $(u,v) \in Q_3$,
$(u,r)\in Q_4$ we have $f(q,v)= f(q,r)$. This implies that for all such
$q$ and all $v,v'\neq u$ we have $f(q,v)=f(q,v')$. Pick $w$ such
that $(w,v)\in Q_3$, $(w,r)\in Q_4$. Pick $q$ such that $(q,p)\in X$. We have
$f(q,w)\neq f(p,w)$, and so $(q,p,p,p)$ and $(w,w,v,r)$ prove the
claim.\\

\textbf{Case 3.} To finish the proof, suppose that
$a_3=a_4\notin\{a_1,a_2\}$ and $b_1=b_2\notin\{b_3,b_4\}$. Write
$(a_3,a_1)\in P_1$, $(a_3,a_2)\in P_2$, $(b_1,b_3)\in Q_3$ and $(b_1,b_4)\in
Q_4$, where $P_i, Q_i\in\{E,N\}$.

\textbf{Case 3.1} There exists $u$ such that for all $p,v,r$ with
$(v,r) \in Y$, $(u,v) \in Q_3$ and $(u,r) \in Q_4$ we have $f(p,v)=f(p,r)$. Then
one easily concludes that for all $p\in V$ and all $v,v'\in V$ with
$v,v'\neq u$ we have $f(p,v)=f(p,v')$. This implies that $f(p,v)\neq
f(q,v)$ whenever $p\neq q$ and $v\neq u$. Since $f$ is essential,
there exist $p,v$ with $(u,v) \in Y$ such that $f(p,u) \neq f(p,v)$. Now
pick $w,m,n$ such that $wu\in Q_3$, $(w,v) \in Q_4$, $(m,n) \in X$, $mp\in
P_1$, and $(n,p) \in P_2$. Then the tuples $(m,n,p,p)$ and $(w,w,u,v)$
prove the claim.

\textbf{Case 3.2} For all $u$ there exist $p,v,r$ with $(v,r) \in Y$,
$(u,v) \in Q_3$, $(u,r) \in Q_4$ and $f(p,v)\neq f(p,r)$. Pick $m,n,u$ with
$(m,n)\in X$ and $f(m,u) \neq f(n,u)$. Pick $p,v,r$ such that $(v,r)\in Y$,
$(u,v) \in Q_3$, $(u,r) \in Q_4$ and $f(p,v)\neq f(p,r)$. If we can pick $p$
in such a way that $(p,m) \in P_1$ and $(p,n) \in P_2$, then $(m,n,p,p)$
and $(u,u,v,r)$ prove the claim, so suppose that this is impossible.
Then for any $q$ with $(q,m) \in P_1$ and $(q,n)\in P_2$ and all $v,r$ with
$(v,r) \in Y$, $(u,v) \in Q_3$, $(u,r) \in Q_4$ we have $f(q,v)= f(q,r)$. This
is easily seen to imply that for all such $q$ and all $v,v'\neq u$
we have $f(q,v)=f(q,v')$. Pick $w$ such that $(w,v) \in Q_3$, $(w,r) \in
Q_4$, and $w\neq u$. Pick $q,q'$ such that $(q,q')\in X$, $(q,p) \in P_1$ and
$(q',p) \in P_2$. We have $f(q,w)\neq f(q',w)$, and thus $(q,q',p,p)$
and $(w,w,v,r)$ prove the claim.
\end{proof}

\section{The ordered graph product Ramsey lemma}\label{sect:productRamsey}

In order to find the minimal functions which are not unary, we need to develop the Ramsey-theoretic tools that allow us to find patterns in
the behavior of such higher arity functions; this is the purpose of this section.

\begin{defn}
    Let $\Gamma_1,\ldots,\Gamma_n$ be structures. For a tuple $x$ in the cartesian product $\Gamma:=\Gamma_1\mult\cdots\mult \Gamma_n$, we write
    $x_i$ for the $i$-th coordinate of $x$. The \emph{type} of a sequence of tuples $a^1,\ldots,a^m\in \Gamma$, denoted by $\typ(a^1,\ldots,a^m)$, is the cartesian product of the types
    of $(a^1_i,\ldots,a^m_i)$ in $\Gamma_i$.
\end{defn}

\begin{defn}
    An \emph{ordered graph} is a graph with an additional total order
    $\prec$ on the vertices. An \emph{ordered graph product} is a cartesian product of ordered graphs.
\end{defn}

The following extends the definition of a canonical function on graphs, $n$-partitioned graphs, and $n$-constant graphs from Section~\ref{sect:structureInMappings} to functions on (ordered) graph products.

\begin{defn}
    Let $F_1,\ldots,F_n, Z$ be (ordered) graphs. Set $F:=F_1\mult\cdots\mult F_n$. An operation $g \colon 
    F\To Z$ is \emph{canonical} iff 
    for all
    $x,y,u,v\in
    F$ with $\typ(x,y)=\typ(u,v)$ we
    have $\typ(f(x),f(y))=\typ(f(u),f(v))$.
\end{defn}

\begin{lem}[The ordered graph product Ramsey lemma]
\label{lem:OGPRL}
    For every finite ordered graph product $F:=F_1\mult\cdots\mult F_n$ there exists a finite ordered graph product $H:=H_1\mult\cdots\mult H_n$ such that for all functions $f \colon H\To Z$ to an ordered graph $Z$ there is a
    copy $F'$ of $F$ in $H$ on which $f$ is canonical.
\end{lem}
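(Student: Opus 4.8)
The plan is to derive this as an iterated application of the ordered-structures Ramsey theorem (Theorem~\ref{thm:ramsey}), in the same spirit as the $n$-partitioned graph Ramsey lemma (Lemma~\ref{lem:partitionedGraphRamseyLemma}), but now working inside a product rather than a single structure. The key observation is that a function $f: H\To Z$ into an ordered graph $Z$ induces a coloring of the pairs $(x,y)$ of vertices of $H$ by the finitely many possible types $\typ(f(x),f(y))$ in $Z$ --- and since $Z$ carries only the binary relations $E$ (or rather a single binary edge relation) and the order $\prec$, there are boundedly many such types (a type of a pair records whether the two images are equal, adjacent or non-adjacent, and their relative $\prec$-order). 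So ``$f$ is canonical on $F'$'' is exactly the statement that this coloring is constant on each of the finitely many type-classes of pairs $(x,y)$ of vertices of $F'$. Thus we must produce, inside $H$, a copy of $F$ on which a given finite coloring of pairs is ``canonical'' in the sense of depending only on the type of the pair.

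The key steps, in order: First I would enumerate the finitely many two-element \emph{configurations} $\theta$ that can occur in the ordered graph product $F=F_1\mult\cdots\mult F_n$, i.e.\ the possible values of $\typ(x,y)$ for $x,y\in F$; each such $\theta$ is a tuple $(\theta_1,\dots,\theta_n)$ where $\theta_i$ is the (ordered-graph) type of a pair in $F_i$. Second, for each configuration $\theta$, I would build the ``diagonal pattern'' $\mathscr{L}^\theta$: a two-vertex ordered graph product realizing exactly $\theta$ (a product of two-element ordered graphs, one factor per coordinate, chosen so that the pair of vertices has type $\theta_i$ in factor $i$). Third, I would apply Theorem~\ref{thm:ramsey} once for each configuration $\theta$, in sequence: starting from (an ordered version of) $F$ and peeling off configurations one at a time. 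Here I use that an ordered graph product, once we fix an ordering, \emph{is} a finite ordered relational structure --- over the signature with one binary relation per factor encoding the edge relation of that factor, plus the product order $\prec$ --- so Theorem~\ref{thm:ramsey} applies to it, and a copy of $\mathscr{L}^\theta$ inside such a structure is precisely a pair of vertices whose type is $\theta$. After handling configuration $\theta^{(1)}$ we get $H^{(1)}$ with $H^{(1)}\To (F)^{\mathscr{L}^{\theta^{(1)}}}_k$; feeding $H^{(1)}$ in place of $F$ into the next application handles $\theta^{(2)}$ while preserving the property already gained for $\theta^{(1)}$ (a standard ``nested'' argument: a copy homogeneous for the later pattern, sitting inside a copy homogeneous for the earlier pattern, is homogeneous for both). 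After exhausting all configurations we obtain $H$; forgetting the order turns $H$ back into an ordered graph product $H_1\mult\cdots\mult H_n$ (each $H_i$ is the $i$-th factor, which we can recover since the binary relations of the product structure are the pullbacks of the factor edge relations), and any coloring of pairs of $H$ by type-preserving data restricts to a canonical coloring on a copy of $F$.

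The main obstacle I expect is bookkeeping rather than conceptual: making precise that an ordered graph product is a legitimate finite ordered relational structure to which Theorem~\ref{thm:ramsey} applies (one must be slightly careful that the product order on $F_1\mult\cdots\mult F_n$ is a genuine linear order --- e.g.\ the lexicographic order --- and that the factors, hence the patterns $\mathscr{L}^\theta$, are recoverable from the structure), and verifying that the ``reordering'' trick used in Lemma~\ref{lem:partitionedGraphRamseyLemma} (where one changes the order on the Ramsey witness so that it is compatible with the order on the target copy of $F$) goes through here: the types realized in $F$ constrain the relative order of the two vertices in each factor, so after applying Theorem~\ref{thm:ramsey} one may have to massage the linear order on the witness so that the homogeneous copy of $F$ found inside it actually has the intended factor-orders. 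This is exactly parallel to the argument in the proof of Lemma~\ref{lem:partitionedGraphRamseyLemma} and causes no new difficulty; I would simply remark that the proof is analogous and give the reader the adapted patterns $\mathscr{L}^\theta$.
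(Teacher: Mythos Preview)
There is a genuine gap in your approach, and it lies precisely at the step you flag as ``bookkeeping rather than conceptual.'' When you encode the product $F_1\mult\cdots\mult F_n$ as a single ordered relational structure (with one binary edge-relation per factor, plus a linear order) and feed it into Theorem~\ref{thm:ramsey}, the Ramsey witness $\S$ you get back is an arbitrary finite ordered structure in that signature. There is no reason whatsoever for $\S$ to be a product of ordered graphs: a structure carrying $n$ binary relations and a linear order is in general not of the form $H_1\mult\cdots\mult H_n$, and the factors cannot be ``recovered'' from it. Even if you enrich the signature by equivalence relations $=_i$ (equality in the $i$-th coordinate) and factor orders $\prec_i$, the Ramsey witness need not interpret $=_i$ as an equivalence relation nor $\prec_i$ as a linear order. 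And even if by some miracle $\S$ were a product, the monochromatic copy of $F$ inside it promised by Theorem~\ref{thm:ramsey} is only an isomorphic \emph{substructure}, not a product substructure $F_1'\mult\cdots\mult F_n'$ with $F_i'\subseteq H_i$; but that product form is exactly what the lemma requires.

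The paper's proof circumvents this by induction on the number $n$ of factors, handling one type of pair at a time via a Fubini-style argument: assume the result for $n-1$ factors to obtain $H_1\mult\cdots\mult H_{n-1}$, let $m$ be the number of pairs of the relevant type in that product, and then build $H_n$ by iterating the one-factor case $m$ times so that, for each fixed pair $(x',y')$ in the first $n-1$ coordinates, the induced coloring on the last coordinate becomes constant on a common copy of $F_n$. This preserves the product structure at every stage, which is what your direct application of Theorem~\ref{thm:ramsey} cannot do.
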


    Note that Lemma~\ref{lem:OGPRL} is not true if the graphs are not ordered: let $n=2$, and let $I_2$ be the graph which has two vertices and no edges. Set $F_1$ and $F_2$ equal to $I_2$. Suppose $H$ exists, and order its components $H_1, H_2$ linearly. Define on the domain of $H$ the following graph $Z$: Two pairs $x,y$ are connected by an edge iff they are comparable in the product order. Set $f \colon H\To Z$ to be the identity function. Now whenever $x_i,y_i\in H_i$ are so that they induce copies $F_i'$ of $F_i$, for $i=1,2$, then $f$ is not canonical on $F_1'\mult F_2'$: for, assume without loss of generality that $x_i$ is smaller than $y_i$ in the order on $H_i$. Then
    $\typ((x_1,x_2),(y_1,y_2))=\typ((x_1,y_2),(y_1,x_2))$, but $\typ(f(x_1,x_2),f(y_1,y_2))\neq\typ(f(x_1,y_2),f(y_1,x_2))$ as $f(x_1,x_2), f(y_1,y_2)$ are connected by an edge in $Z$ but $f(x_1,y_2), f(y_1,x_2)$ are not.

    We remark that as we have seen in Section~\ref{sect:structureInMappings}, $n>1$ is necessary for this problem to appear.

\begin{proof}[Proof of Lemma~\ref{lem:OGPRL}]
    We prove the following claim: for every type $\typ(u,v)$ of $n$-tuples $u,v$ in the ordered graph product $F$ there exists a finite ordered graph product $H:=H_1\mult\cdots\mult H_n$ such that whenever $f \colon H\to Z$ is a function, then there is a copy $F'$ of $F$ in $H$ with the property that $\typ(f(x),f(y))=\typ(f(a),f(b))$  for all $x,y,a,b\in F'$ such that $\typ(x,y)=\typ(a,b)=\typ(u,v)$. Repeated use of the claim for all types of pairs in $F$ then proves the lemma.

    In fact, we prove the following more abstract statement, which clearly implies our original claim: for every type $\typ(u,v)$ of two $n$-tuples in an ordered graph product $F$ and for any $k<\omega$ there exists a finite ordered graph product $H:=H_1\mult\cdots\mult H_n$ such that whenever $\chi$ is a coloring of the pairs $(x,y)$ in $H$ with $\typ(x,y)=\typ(u,v)$ with $k$ colors, then there is a copy $F'$ of $F$ in $H$ on which the coloring is constant.

    To prove the claim, we use induction over $n$. The induction beginning $n=1$ is just a subset of the proof of Corollary~\ref{cor:coloringEdgesAndNonEdgesOfG} (as there, we had to introduce an order for the sake of the proof, and now we are already given ordered graphs). So suppose $n>1$ and that the claim holds for all $i<n$. Let $n$-tuples $u,v\in F$ defining the type be given. Set $u':=(u_1,\ldots,u_{n-1})$, and define $v'$ analogously. By induction hypothesis, there is an ordered graph product $H_1\mult\cdots\mult H_{n-1}$ such that whenever its pairs $(x',y')$ with $\typ(x',y')=\typ(u',v')$ are colored with $k$ colors, then there is a copy of $F_1\mult\cdots\mult F_{n-1}$ in $H_1\mult\cdots\mult H_{n-1}$ on which the coloring is constant. Let $m$ be the number of pairs $(x',y')$ in $H_1\mult\cdots\mult H_{n-1}$ which have type $\typ(u',v')$. By induction hypothesis, there is an ordered graph $H_{n,1}$ with the property that whenever its pairs $(x_n,y_n)$ with $\typ(x_n,y_n)=\typ(u_n,v_n)$ are colored with $k$ colors, then it contains a monochromatic copy of $F_n$. Further, there is an ordered graph $H_{n,2}$ with the property that whenever its subsets of this type are colored with $k$ colors, then it contains a monochromatic copy of $H_{n,1}$. Continue constructing ordered graphs like that, arriving at $H_n:=H_{n,m}$.
    We claim that $H:=H_1\mult\cdots\mult H_n$ has the desired properties. To see this, let a coloring $\chi$ of the pairs of $H$ of type $\typ(u,v)$  be given. Let $(x^1,y^1),\ldots,(x^m,y^m)$ be an enumeration of all the pairs in $H_1\mult\cdots\mult H_{n-1}$ which have type $\typ(u',v')$. For all $1\leq i\leq m$, define a coloring $\chi^i$ of the pairs $(p,q)$ of $H_n$ of type $\typ(u_n,v_n)$ by setting $\chi^i(p,q):=\chi(x^i\smallsmile p,y^i\smallsmile q)$, where $a\smallsmile b$ denotes the concatenation of two tuples $a,b$. By thinning out $H_n$ $m$ times, we obtain a copy $F_n'$ of $F_n$ in $H_n$ on which each coloring $\chi^i$ is constant with color $c^i$. Now by that construction, all pairs $(x^i,y^i)$ have been assigned a color $c^i$, the assignment thus being a coloring of all the pairs of type $\typ(u',v')$ in $H_1\mult\cdots\mult H_{n-1}$. By the choice of that product, there is a copy $F'_1\mult\cdots\mult F'_{n-1}$ of $F_1\mult\cdots\mult F_{n-1}$ in $H_1\mult\cdots\mult H_{n-1}$ on which that coloring is constant, say with value $r$. But that means that if $x,y\in F_1'\mult\cdots\mult F_n'$ have type $\typ(u,v)$, then $\chi(x,y)=r$, proving our statement.
\end{proof}

\section{Minimal binary functions}\label{sect:minimalBinary}

We know from Theorem~\ref{thm:bin-inj} and Lemma~\ref{lem:en} that all essential minimal functions are binary, injective, and preserve both $E$ and $N$. It is the goal of this section to determine these binary minimal functions.

Let $V$ be equipped with a total order $\prec$ in such a way that $(V;E,\prec)$ is the random ordered graph, i.e., the unique countably infinite homogeneous graph containing all finite ordered graphs (for existence and uniqueness of this structure, see e.g. \cite{Hodges}). The order $(V;\prec)$ is then isomorphic to the order of the rationals $\mathbb{Q}$.

From now on, until Proposition~\ref{prop:essentialMinimalIndependentOfOrder}, we see the random graph 
equipped with this order, in particular when talking about canonical behavior of functions. Note in this context that a function $f \colon V^k\To V$ which is canonical with respect to the language of ordered graphs need not be canonical in the language of ordinary graphs; the converse implication does not hold either.

\begin{prop}\label{prop:generatesCanonical}
    Every function $f \colon V^k\to V$ is canonical on arbitrarily large finite ordered graph products. In particular, every binary
    injection generates a binary injection which is canonical with respect to the language of ordered graphs.
\end{prop}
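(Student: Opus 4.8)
The plan is to deduce both assertions from the ordered graph product Ramsey lemma (Lemma~\ref{lem:OGPRL}) in essentially the same way that Proposition~\ref{prop:interpolation} was deduced from Corollary~\ref{cor:coloringEdgesAndNonEdgesOfG}. Fix $f\colon V^n\to V$. For the first assertion I want to show that every finite ordered graph product $F=F_1\times\cdots\times F_n$ has a copy $F'$ inside $V^n$ (where each factor $V$ carries the order $\prec$ and the edge relation $E$) on which $f$ is canonical. To apply Lemma~\ref{lem:OGPRL} I need a target \emph{ordered} graph $Z$ together with a function encoding the behaviour of $f$. The natural choice is to let $Z$ be the structure on domain $V$ whose relations record, for a pair $(a,b)$ of vertices, the quantifier-free type $\typ(f$-image$)$; concretely, colour the pair $(p,q)$ of distinct elements of $V$ by the quantifier-free type of $(f(p),f(q))$ in $(V;E,\prec)$ — i.e. by whether $f(p)=f(q)$, whether $E(f(p),f(q))$, and whether $f(p)\prec f(q)$. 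Since $(V;E,\prec)$ is $\omega$-categorical with a finite relational language, there are only finitely many such types, so this is a finite colouring, and it can be viewed as the edge/order data of an ordered graph $Z$ on $V$ together with finitely many colour classes. By Lemma~\ref{lem:OGPRL} applied to $F$ (iterating once per colour class, exactly as in the proof of Corollary~\ref{cor:coloringEdgesAndNonEdgesOfG}), there is a copy $F'$ of $F$ in $V^n$ such that for all $x,y,u,v\in F'$ with $\typ(x,y)=\typ(u,v)$ we get $\typ(f(x),f(y))=\typ(f(u),f(v))$; that is precisely the statement that $f$ is canonical on $F'$.

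Next I would upgrade ``arbitrarily large finite ordered graph products'' to a single canonical function, using the standard compactness/diagonalisation argument (König's lemma plus $\omega$-categoricity of $(V;E,\prec)$), just as canonicity on arbitrarily large finite structures was turned into interpolation in Section~\ref{sect:structureInMappings}. Since $(V;E,\prec)$ is the random ordered graph, $(V;E,\prec)^n$ is $\aleph_0$-universal among finite ordered graph products, so by homogeneity we may move these canonical copies around by automorphisms of $(V;E,\prec)$; applying the first assertion to the sequence of all finite ordered graph products $F^{(m)}=F_1^{(m)}\times\cdots\times F_n^{(m)}$ with $F^{(m)}\subseteq F^{(m+1)}$ and extracting a convergent subsequence of the resulting partial maps yields a function $g$ on $V^n$ that is canonical with respect to the ordered-graph language and generated by $f$ (it is an interpolation of $f$ modulo automorphisms of $(V;E,\prec)$, hence of $G$ as well). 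Specialising to $n=2$ and starting from a binary injection $f$: the interpolated function $g$ is still injective (injectivity on pairs is witnessed on finite subsets, which $g$ agrees with $f$ on up to automorphism), binary, canonical, and generated by $f$, which gives the ``in particular'' clause.

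The main obstacle, I expect, is the bookkeeping needed to make the colouring genuinely a colouring of \emph{pairs} in an \emph{ordered graph product} of the right shape so that Lemma~\ref{lem:OGPRL} literally applies: the colour of $(p,q)$ with $p=(p_1,\dots,p_n)$ depends on $f(p)$ and $f(q)$ only, which is fine, but one has to be careful that the definition of ``canonical'' in Lemma~\ref{lem:OGPRL} — equality of $\typ(f(x),f(y))$ for pairs of equal type — matches the definition of ``canonical on a graph'' used in Section~\ref{sect:structureInMappings} once restricted to $n=1$, and that the iteration over the finitely many colours (equivalently, over the finitely many pair-types in the target) is carried out correctly, since Lemma~\ref{lem:OGPRL} is stated for a single target ordered graph $Z$ rather than an arbitrary finite colouring. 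This is handled exactly as the passage from a Ramsey statement for structures to one for colourings in Corollary~\ref{cor:coloringEdgesAndNonEdgesOfG}, namely by treating the colour classes as additional unary predicates / iterating the lemma once per colour; none of this is deep, but it is the only place where something could go wrong.
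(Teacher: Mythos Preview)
Your approach is correct and matches the paper's, which proves the first assertion as a one-line consequence of Lemma~\ref{lem:OGPRL} and the second via ``finitely many canonical behaviours plus local closure''. The only remark worth making is that your worry about bookkeeping and ``iterating once per colour class'' is unnecessary: Lemma~\ref{lem:OGPRL} is already stated for functions $f\colon H\to Z$ into an \emph{ordered graph} $Z$, with canonicity defined exactly as equality of $\typ(f(x),f(y))$ for pairs of equal source type. Taking $Z=(V;E,\prec)$ --- the ordered random graph itself --- and restricting the given $f\colon V^n\to V$ to the finite $H$ provided by the lemma (embedded in $V^n$ by universality) gives the conclusion directly, with no need to encode types as colours or to iterate. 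Your compactness/K\"onig argument for the second clause is likewise what the paper's phrase ``local closure'' abbreviates.
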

\begin{proof}
    The first statement is a direct consequence of the ordered graph product Ramsey lemma (Lemma~\ref{lem:OGPRL}). The second statement follows
    from the fact that there are only finitely many canonical behaviors on every finite ordered graph product, and by local closure.
\end{proof}
The following is also straightforward to verify.
\begin{prop}\label{prop:staysCanonical}
    If a function $f \colon V^k\to V$ is canonical with respect to some base structure (i.e., the random graph or the ordered random graph), then so are all functions it
    generates.
\end{prop}

Hence, by Propositions~\ref{prop:generatesCanonical} and~\ref{prop:staysCanonical} all minimal binary injections are canonical as functions on the ordered random graph. In the following, we determine those canonical behaviors of binary injections that yield minimal functions.

\begin{definition}
    Let $f \colon V^2 \rightarrow V$ be injective. If for all $(u_1,u_2),(v_1,v_2)\in V^2$ with $u_1\prec v_1$ and $u_2\prec v_2$ we have
    \begin{itemize}
        \item $(f(u_1,u_2),f(v_1,v_2)) \in E$ if and only if $(u_1,v_1) \in E$ and $(u_2,v_2) \in E$, then we say that \emph{$f$ behaves like $\min$ on input $(\prec,\prec)$}.
        \item $(f(u_1,u_2),f(v_1,v_2)) \in N$ if and only if $(u_1,v_1) \in N$ and $(u_2,v_2) \in N$, then we say that \emph{$f$ behaves like $\max$ on input $(\prec,\prec)$}.
        \item $(f(u_1,u_2),f(v_1,v_2)) \in E$ if and only if $(u_1,v_1)\in E$, then we say that \emph{$f$ behaves like $p_1$ on input $(\prec,\prec)$}.
        \item $(f(u_1,u_2),f(v_1,v_2)) \in E$ if and only if $(u_2,v_2) \in E$, then we say that \emph{$f$ behaves like $p_2$ on input $(\prec,\prec)$}.
    \end{itemize}
    Analogously, we define behavior on input $(\prec,\succ)$ using pairs $(u_1,u_2),(v_1,v_2)\in V^2$ with $u_1 \prec v_1$ and $u_2\succ v_2$.
\end{definition}

Of course, we could also have defined ``behavior on input $(\succ ,\succ )$'' and ``behavior on input $(\succ ,\prec )$''; however, behavior on input $(\succ ,\succ )$ equals behavior on input $(\prec ,\prec )$, and behavior on input $(\succ ,\prec )$ equals behavior on input $(\prec ,\succ )$. Thus, there are only two kinds of inputs to be considered, namely the ``straight input" $(\prec,\prec)$ and the ``twisted input"  $(\prec ,\succ )$.

\begin{prop}\label{prop:binaryBehaviourOnInputBLaBla}
Let $f \colon V^2 \rightarrow V$ be injective and canonical as a function on the ordered random graph, and suppose it preserves $E$ and $N$. Then it behaves like $\min$, $\max$, $p_1$ or $p_2$ on input $(\prec ,\prec )$ (and similarly on input $(\prec ,\succ)$).
\end{prop}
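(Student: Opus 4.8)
The plan is to use canonicity to replace $f$ by a single function $b\colon\{E,N\}^2\to\{E,N\}$ describing its straight-input behavior, and then to use preservation of $E$ and $N$ to cut the sixteen a priori possibilities for $b$ down to the four named ones. First I would fix a straight pair $x=(x_1,x_2)$, $y=(y_1,y_2)$, i.e.\ one with $x_1\prec y_1$ and $x_2\prec y_2$, and record in $R_1,R_2\in\{E,N\}$ which of $E,N$ holds between $x_1,y_1$ and between $x_2,y_2$. In the language of ordered graphs the type of $(x,y)$ in $(V;E,\prec)^2$ is, by definition, the product of the type ``$x_1\prec y_1\wedge R_1(x_1,y_1)$'' with the type ``$x_2\prec y_2\wedge R_2(x_2,y_2)$'', hence depends only on $(R_1,R_2)$. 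Since $f$ is canonical, the type of $(f(x),f(y))$ in $(V;E,\prec)$ is then also a function of $(R_1,R_2)$ only; and since $f$ is injective while $x\neq y$ (because $x_1\prec y_1$), we have $f(x)\neq f(y)$, so this type contains exactly one of $E$ or $N$, which I call $b(R_1,R_2)$. The order component $\prec$ or $\succ$ of the image type is irrelevant to the statement, so the whole straight-input behavior of $f$ is encoded by $b$.

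Next I would pin down $b$ at $(E,E)$ and $(N,N)$ using the hypothesis. In the random ordered graph there are $x_1\prec y_1$ with $E(x_1,y_1)$ and, independently, $x_2\prec y_2$ with $E(x_2,y_2)$; the tuples $(x_1,x_2)$ and $(y_1,y_2)$ are then related by $E$ in each coordinate, so preservation of $E$ yields $E(f(x_1,x_2),f(y_1,y_2))$, i.e.\ $b(E,E)=E$. Dually, choosing non-edges in both coordinates and using preservation of $N$ gives $b(N,N)=N$. Only the two values $b(E,N)$ and $b(N,E)$ remain free, each in $\{E,N\}$, so there are exactly four possibilities: if both equal $E$ then $b$ outputs $E$ unless $R_1=R_2=N$, which is behaving like $\max$; if both equal $N$ then $b$ outputs $N$ unless $R_1=R_2=E$, which is behaving like $\min$; if $b(E,N)=E$ and $b(N,E)=N$ then $b(R_1,R_2)=R_1$, i.e.\ $p_1$; and the last case is $b(R_1,R_2)=R_2$, i.e.\ $p_2$. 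This settles input $(\prec,\prec)$. For input $(\prec,\succ)$ the argument is verbatim the same: a twisted pair $x,y$ with $x_1\prec y_1$ and $x_2\succ y_2$ and coordinate relations $(R_1,R_2)$ still has a type depending only on $(R_1,R_2)$, its two coordinate tuples are related by $E$ (resp.\ by $N$) in both coordinates precisely when $R_1=R_2=E$ (resp.\ $=N$), and preservation again forces the values at $(E,E)$ and $(N,N)$, leaving the same four behaviors.

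I do not expect a genuine obstacle here; the proof is a finite case check once the reduction is in place. The two points that need a little care are: (i) justifying that canonicity together with injectivity really makes the straight-input behavior a well-defined $\{E,N\}$-valued function of the pair of coordinate relations --- this is where injectivity enters, ruling out the ``collapsing'' case $f(x)=f(y)$ in which the image type would instead record equality; and (ii) noticing that preservation of $E$, although it quantifies over all pairs of tuples that are $E$-related coordinatewise rather than only over straight ones, nonetheless applies to straight pairs, simply because both orientations of an edge --- and of a non-edge --- are realized in the random ordered graph, which is what fixes $b$ on the ``diagonal'' inputs $(E,E)$ and $(N,N)$.
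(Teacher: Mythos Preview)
Your proposal is correct and is precisely the argument the paper has in mind: the paper's proof is a one-liner (``enumerate all possible types $\Type(x,y)$''), and the subsequent remark that the four behaviors are exactly the idempotent functions on $\{E,N\}$ is your observation that preservation of $E$ and $N$ forces $b(E,E)=E$ and $b(N,N)=N$. You have simply spelled out what the paper leaves implicit, including the role of injectivity in excluding the collapsing case.
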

\begin{proof}
    By definition of the term canonical; one only needs to enumerate all possible types $\typ(x,y)$ of pairs $x,y\in V^2$ with respect to the ordered random graph.
\end{proof}

We remark that the four possibilities correspond to the four binary operations $g$ on the two-element domain $\{E,N\}$ that are \emph{idempotent}, i.e., that satisfy that $g(E,E)=E$ and $g(N,N)=N$.

\begin{defn}
    If $f \colon V^2 \rightarrow V$ behaves like $X$ on input $(\prec ,\prec )$ and like $Y$ on input $(\prec ,\succ )$, where $X,Y\in\{\max,\min, p_1,p_2\}$, then we say that $f$ is of \emph{type $X / Y$}.
\end{defn}

Observe that in Proposition~\ref{prop:binaryBehaviourOnInputBLaBla}, we did not care about the fact that a canonical injection
$f \colon V^2 \rightarrow V$ also behaves regularly with respect to the order: The latter implies, for example, that $f$ is either strictly increasing or decreasing with respect to the pointwise order, i.e., $x_1\prec y_1$ and $x_2\prec y_2$  either always implies $f(x_1,x_2)\prec f(y_1,y_2)$, or it always implies $f(x_1,x_2)\succ f(y_1,y_2)$. Fix from now on any automorphism $\alpha$ of the graph $G$ that reverses the order on $V$. By applying $\alpha$ to $f$ if necessary, we may assume that $f$ is strictly increasing, which will be a tacit assumption from now on. Having that, one easily checks that $f$ satisfies one of the implications
$$
x_1\prec y_1\wedge x_2\neq y_2\rightarrow f(x_1,x_2)\prec f(y_1,y_2)
$$
and
$$
x_1\neq y_1\wedge x_2\prec y_2 \rightarrow f(x_1,x_2)\prec f(y_1,y_2).
$$
In the first case, we say that \emph{$f$ obeys $p_1$ for the order}, in the second case \emph{$f$ obeys $p_2$ for the order}. By switching the variables of $f$, we may always assume that $f$ obeys $p_1$ for the order, and we shall do so from now on.

We will now prove that minimal binary canonical injections are never of mixed type, i.e., they have to behave the same way on straight and twisted inputs.

\begin{prop}\label{prop:noMixed}
    Let $f \colon V^2 \rightarrow V $ be injective and canonical as a function on the ordered random graph. Suppose moreover that $f$ is of type $X / Y$, where $X,Y\in\{\max,\min,p_1,p_2\}$ and $X\neq Y$. Then $f$ is not minimal.
\end{prop}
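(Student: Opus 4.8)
The plan is to observe that this proposition is merely a compilation of the four preceding lemmas, obtained through an exhaustive case distinction over the unordered pair $\{X,Y\}$. Since $X\neq Y$ and $X,Y\in\{\max,\min,p_1,p_2\}$, there are exactly $\binom{4}{2}=6$ possibilities for $\{X,Y\}$, namely $\{\max,\min\}$, $\{\max,p_1\}$, $\{\max,p_2\}$, $\{\min,p_1\}$, $\{\min,p_2\}$, and $\{p_1,p_2\}$. It suffices to check that each of these is covered by one of the four lemmas just proved.

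First I would dispatch the two cases $\{X,Y\}=\{\max,p_1\}$ and $\{X,Y\}=\{\max,p_2\}$: in each of them $f$ is, up to swapping the two slots, of type $\max/p_i$ or of type $p_i/\max$ for some $i\in\{1,2\}$, so the first of the four lemmas applies and $f$ is not minimal. The two cases $\{X,Y\}=\{\min,p_1\}$ and $\{X,Y\}=\{\min,p_2\}$ are handled identically by the second lemma. The case $\{X,Y\}=\{\max,\min\}$ is exactly the content of the third lemma, and the case $\{X,Y\}=\{p_1,p_2\}$ is exactly the content of the fourth lemma. Since these six possibilities exhaust all mixed types $X/Y$ with $X\neq Y$, every such $f$ falls under one of the four lemmas and is therefore not minimal.

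There is no real obstacle here: the substantive work was already done in the four lemmas, each of which produced from $f$ — by composing $f$ with itself and with the order-reversing automorphism $\alpha$ — a binary injective canonical function whose type is non-mixed (or otherwise of a form, such as $p_2/p_2$, that is manifestly stable under generation), and which $f$ consequently cannot regenerate. The only point one has to be slightly careful about is that each of the four lemmas is stated symmetrically in the two slots of the type, i.e. it covers $A/B$ together with $B/A$; granting that, the case list above is visibly complete and the proof is finished.
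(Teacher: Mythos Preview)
Your proposal is correct and matches the paper's approach exactly: the paper does not even give a formal proof of this proposition, introducing it with ``We have seen that actually no `mixed' types appear for minimal functions. We summarize this fact in the following,'' so it is explicitly intended as a summary of the four preceding lemmas. Your exhaustive enumeration of the six unordered pairs $\{X,Y\}$ and their dispatch to the four lemmas is precisely the intended argument.
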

\begin{proof}
    Suppose first that $f$ is of type $\max / p_i$ or of type $p_i / \max$, where $i\in\{1,2\}$. 
     We claim that $f$ generates a binary injective canonical function $g$ which is of type $\max / \max $. Clearly, all binary injective canonical functions generated by $g$ then are also of type $\max / \max $, so $g$ cannot generate $f$, which shows that $f$ is not minimal.
    Assume without loss of generality that $f$ is of type $\max / p_i$, and note that we assume that $f$ obeys $p_1$ for the order. Set $h(u,v):=f(u,\alpha(v))$. Then $h$ behaves like $p_i$ on input
    $(\prec ,\prec )$ and like $\max$ on input $(\prec ,\succ )$; moreover, $f(x_1,x_2)\prec f(y_1,y_2)$ iff $h(x_1,x_2)\prec h(y_1,y_2)$, for all
     $x_1\neq y_1$ and $x_2\neq y_2$. We then have that $g(u,v):=f(f(u,v),h(u,v))$ is of type $\max / \max$, finishing the proof of our claim. 

    If $f$ is of type $\min / p_i$ or of type $p_i / \min$, where $i\in\{1,2\}$, then the dual proof works.

    Consider the case where $f$ is of type $\max / \min$ or of type $\min / \max$. 
    Assume without loss of generality that $f$ is of type $\max / \min$, and remember that we assume that $f$ obeys $p_1$ for the order. Consider $h(u,v):=f(f(u,v),\alpha(v))$. Then $h$ is of type $p_2 / p_2$, so it cannot reproduce $f$. Hence $f$ is not minimal.

    To finish the proof, suppose that $f$ is of type $p_1 / p_2$ or of type $p_2 / p_1$.
    If $f$ is of type $p_1 / p_2$, then $h(u,v):=f(f(u,v),\alpha(v))$ is of type $p_2 / p_2$ and cannot reproduce $f$. If $f$ is of type $p_2 / p_1$, then $g(u,v):=f(u,\alpha(v))$ is of type $p_1 / p_2$ and still obeys $p_1$ for the order; hence, we are back in the first case.
\end{proof}

This motivates the following definition.

\begin{definition}
    Let $f \colon V^2 \rightarrow V$. We say that $f$ \emph{behaves like $\min$ ($\max$, $p_1$, $p_2$)
    on input $(\neq,\neq)$} iff it behaves like $\min$ ($\max$, $p_1$, $p_2$) both on input $(\prec ,\prec )$ and on input $(\prec,\succ )$. We also say that $f$ \emph{is of type $\min$ ($\max$, $p_1$, $p_2$)}. If $f$ is of type $p_1$ or $p_2$ then we also say that $f$ is of type \emph{projection}.
\end{definition}

Our observations so far can be summarized as follows.

\begin{prop}\label{prop:essentialMinimalIndependentOfOrder}
Let $f \colon V^2 \rightarrow V $ be essential and minimal. Then it is injective, canonical as a function on the (non-ordered) random graph and behaves like $\min$, $\max$, $p_1$ or $p_2$ on input $(\neq,\neq)$.
\end{prop}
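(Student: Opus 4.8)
The statement is a consolidation of the results of this section, so the plan is to chain them in the right order; the only delicate point is keeping track of the harmless normalizations of $f$ introduced earlier.

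First I would record that $f$, being essential and minimal, preserves $E$ and $N$ by Lemma~\ref{lem:en}, hence also preserves $\neq$, since $x\neq y$ is equivalent to $\exists z.\,E(x,z)\wedge N(y,z)$. For canonicity, I would use the first of the two propositions following Lemma~\ref{lem:OGPRL}: $f$ is canonical on arbitrarily large finite ordered graph products, so by local closure $f$ generates a binary function $f'$ that is canonical with respect to the ordered-graph language; $f'$ still depends on both arguments and so is non-trivial, hence by minimality of $f$ it is equivalent to $f$, and then $f$ itself is canonical by the second proposition following Lemma~\ref{lem:OGPRL}. For injectivity I would argue that a canonical essential $f$ preserving $E$ and $N$ cannot collapse any pair of vertices in a fixed coordinate: collapsing one edge in, say, the first coordinate forces --- by canonicity --- collapsing of \emph{all} edge-pairs there, and then a short path argument in $G$ (a path $u,v,w$ with $E(u,v)$, $E(v,w)$, $N(u,w)$) forces collapsing of the non-edge $\{u,w\}$ and hence, again by canonicity, of every pair, so that $f$ would be essentially unary; the symmetric argument (through a path with $N(u,v)$, $N(v,w)$, $E(u,w)$) rules out collapsing a non-edge. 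Thus $f$ is injective in each coordinate, which together with preservation of $\neq$ yields injectivity of $f$. (Alternatively one may invoke Theorem~\ref{thm:bin-inj}, which produces a binary injection generated by $f$ and, being non-trivial of arity $2$, equivalent to $f$.)

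Now $f$ is an injective canonical operation preserving $E$ and $N$, so by Proposition~\ref{prop:binaryBehaviourOnInputBLaBla} it behaves like one of $\min,\max,p_1,p_2$ on input $(\prec,\prec)$ and like one of these on input $(\prec,\succ)$. To see that these two behaviors agree, I would invoke the proposition stating that a canonical injection of mixed type $X/Y$ with $X\neq Y$ is not minimal. Applying it requires $f$ to be increasing and to obey $p_1$ for the order; this is arranged by post-composing with the order-reversing automorphism $\alpha$ and, if necessary, exchanging the two arguments of $f$ --- operations that preserve equivalence, injectivity and canonicity, and under which ``being of non-mixed type'' pulls back to $f$ (post-composition with $\alpha$ does not change the type, exchanging the arguments at worst swaps $p_1$ with $p_2$). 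Since $f$ is minimal, the mixed-type proposition forces $X=Y$, i.e.\ $f$ is of a single type $X\in\{\min,\max,p_1,p_2\}$, which by the preceding definition means exactly that $f$ behaves like $\min$, $\max$, $p_1$, or $p_2$ on input $(\neq,\neq)$.

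There is no real obstacle, as every ingredient is already available. The most substantive step is injectivity, where one either runs the short path-and-canonicity argument or settles for the (for the classification entirely sufficient) statement that $f$ is equivalent to a binary injection; the only other point requiring care is the bookkeeping of the order-normalizations before the mixed-type proposition is applied.
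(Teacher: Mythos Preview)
Your proposal is correct and follows the paper's approach: the proposition is presented there merely as a summary (``Our observations so far can be summarized as follows''), and you have made the chain of prior results explicit, with your direct path-and-canonicity argument for injectivity and your bookkeeping of the order normalizations actually going beyond what the paper spells out. The one assertion deserving a word more is that the canonical $f'$ you produce ``still depends on both arguments''; the cleanest route is the reordering you yourself suggest parenthetically---first invoke Theorem~\ref{thm:bin-inj} to obtain a binary injection equivalent to $f$, apply the second clause of the proposition after Lemma~\ref{lem:OGPRL} to that injection to get a canonical binary \emph{injection} $g'$ equivalent to $f$, deduce that $f$ is canonical from the next proposition (canonical functions generate canonical functions), and then run your path argument on $f$ directly to obtain literal injectivity.
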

\begin{proof}
	We know that $f$ generates a binary injection which is canonical as a function on the random ordered graph, hence it is itself such a function. Since by Proposition~\ref{prop:noMixed}, $f$ cannot have a ``mixed'' behavior, it behaves like $\min$, $\max$, $p_1$ or $p_2$, and hence is also canonical as a function on the non-ordered random graph.
\end{proof}

In the following, we will thus forget about the order that we imposed on the random graph, and use the terms ``canonical'' and ``behavior'' relative to $G$. We now consider further types of tuples $x,y\in V^2$: So far, we did not look at the case where $x_1=y_1$ or $x_2=y_2$.

\begin{definition}
    Let $f \colon V^2 \rightarrow V $. We say that $f$ \emph{behaves like $e_E$ ($e_N$, $\id$, $\flip$) on input $(\neq,=)$}
    iff for every fixed $c\in V$, the function $g(x):=f(x,c)$ behaves like $e_E$ ($e_N$, $\id$, $\flip$). Similarly we define behavior on input $(=,\neq)$.
\end{definition}

If $f$ is canonical and injective, then it behaves like one of the mentioned functions on input $(\neq,=)$ and $(=,\neq)$, respectively. This motivates the following

\begin{definition}
We say that $f \colon V^2\To V$ is \emph{of type  $E/N$} iff $f$ behaves like $e_E$ on input $(\neq,=)$ and like $e_N$ on input $(=,\neq)$. Similarly we define the types $E/E$, $N/E$, $E/\id$, $E/\flip$, etc. Moreover, we say that $f$ is \emph{balanced} iff it is of type $\id/\id$, we say it is \emph{$E$-dominated} iff it is of type $E/E$, and we say it is \emph{$N$-dominated} iff it is of type $N/N$.
\end{definition}

In the following proposition we finally characterize those canonical behaviors that yield minimal functions.

\begin{prop}\label{thm:minimalFunctions}
    The essential minimal operations on $G$ are precisely the binary
    injective canonical operations of the following types:
       \begin{enumerate}
        \item Projection and balanced.
        \item $\max$ and balanced.
        \item $\min$ and balanced.
        \item $\max$ and $E$-dominated.
        \item $\min$ and $N$-dominated.
        \item Projection and $E$-dominated.
        \item Projection and $N$-dominated.
        \item $p_2$ and $E/\id$, or $p_1$ and $\id/E$.
        \item $p_2$ and $N/\id$, or $p_1$ and $\id/N$.
   \end{enumerate}
   Moreover, these 9 different kinds of minimal functions do not generate one another. Furthermore, any two functions in the same group do generate one another.
\end{prop}
\begin{proof}
By Proposition~\ref{prop:essentialMinimalIndependentOfOrder} we know that all minimal essential functions are necessarily canonical binary injections of type $\min$, $\max$, or projection. Therefore, we must show that out of those functions, the minimal ones  are precisely those listed above. Let henceforth $f$ be a canonical binary injection of type $\min$, $\max$, or projection.

Let us first prove that if $f$ is listed above, then it is indeed minimal. To this end, observe first that by the homogeneity of $G$ and local closure, $f$ then generates all other functions in its class of the theorem. Next note the following facts which can easily be proven by a standard induction over terms.
\begin{itemize}
\item   Any binary essential function generated by a binary canonical injection of type $\min$, $\max$, or projection, respectively, is of the same type.
\item  Any binary essential function generated by a binary canonical injection that is balanced and preserves $E$ and $N$ is balanced.
\end{itemize}

It follows immediately that the if $f$ belongs to the first three classes of the proposition, then it is minimal. 

It is easy to verify that any binary essential function generated by an $E$-dominated binary canonical injection of type $\max$ is $E$-dominated. Dually, any binary essential function generated by an $N$-dominated binary canonical injection of type $\min$ is $N$-dominated. These two facts imply minimality in case $f$ belongs to items (4) or (5).

Observe next that any binary essential function generated by an $E$-dominated binary canonical injection of type projection is $E$-dominated. Dually, any binary essential function generated by an $N$-dominated binary canonical injection of type projection is $N$-dominated. This implies  minimality for the case where $f$ belongs to items (6) or (7).

To prove minimality for the case where $f$ falls into items (8) or (9) we claim the following: Any binary essential function generated by a binary canonical injection of type $E / \id$ and $p_2$ is either of the same type or of type $\id / E$ and $p_1$. Dually, any binary essential function generated by a binary canonical injection of type $N / \id$ and $p_2$ generates is either of the same type or of type $\id / N$ and $p_1$. To see this, let $f(u,v)$ be of type  $E/\id$ and $p_2$. $f(v,u)$ is of type $\id/E$ and $p_1$. Both $f(u,f(u,v))$ and $f(v,f(u,v))$ are of type $E/\id$ and $p_2$. So is $f(f(u,v),v)$. The function $f(f(u,v),u)$ is of type $\id/E$ and $p_1$. Finally, $f(f(u,v),f(v,u))$ also is of type $\id/E$ and $p_1$, so $f$ cannot generate any new typesets.

Next we show that if $f$ does not belong to any of the listed classes, then it is not minimal. 

Suppose first it is of type $\max$. We claim that if $f$ is not balanced or $E$-dominated, then $f$ is not minimal. We go through all possibilities: If $f$ is of type $E/\id$, then $g(x,y):=f(f(x,y),x)$ is $E$-dominated. By our observation above, $g$ cannot reproduce $f$. If $f$ is of type $E/N$, then $g$ is $E$-dominated as well. So it is if $f$ is of type $E/\flip$. If $f$ is of type $N/\id$, then $g(x,y):=f(x,f(x,y))$ is balanced, so $f$ is not minimal by the above. If $f$ is of type $N/\flip$, then $g$ is balanced as well. If $f$ is of type $\id/\flip$ or of type $\flip/\flip$, then $g(x,y):=f(x,f(x,y))$ is of type $E/\id$, which we have already shown not to be minimal.  By symmetry, if we switch the arguments in a type of $f$, e.g., if $f$ is of type $\id/E$, then $f$ is not minimal either. We have thus covered all possible types.

The dual argument works if $f$ is a binary canonical injection of type $\min$: If $f$ is not balanced or $N$-dominated, then $f$ is not minimal.

Suppose now that $f$ is of type $p_1$. We claim that if $f$ is not balanced, $E$-dominated, $N$-dominated, of type $\id/E$, or of type $\id/N$, then $f$ is not minimal. To see this, we distinguish all possible cases:  If $f$ is of type $E/\id$, $E/\flip$, $\flip/\id$, or $\flip/\flip$, then $g(x,y):=f(x,f(x,y))$ is balanced and cannot reproduce $f$. If it is of type $E/N$ or $\id/\flip$, then $g$ is of type $E/\id$, and we are back in the preceding case. Dually, if $f$ is of type $N/\id$ or $N/\flip$, then $g$ is balanced. If it is of type $N/E$, then $g$ is of type $N/\id$, bringing us back to the preceding case. If it is of type $\flip/E$, then $g$ is of type $\id/E$ and $p_1$, and hence cannot reproduce $f$ by the above. The dual argument works if $f$ is of type $\flip/N$.

Finally, et $f$ be of type $p_2$. Then the same argument as above shows that if $f$ is not balanced, $E$-dominated, $N$-dominated, of type $E/\id$, or of type $N/\id$, then $f$ is not minimal. This finishes the proof.
\end{proof}

To summarize, we now restate and prove Theorems~\ref{thm:mainUnary} and ~\ref{thm:main} in the following

\begin{theorem}[Summary of Theorems~\ref{thm:mainUnary} and~\ref{thm:main}]
Any minimal function $e$ on the random graph is equivalent to exactly one of the following operations:
a constant operation; $e_N$; $e_E$; $\flip$; $\sw$; or
\begin{itemize}
\item[(6)] a binary injection of type $p_1$ that is balanced in both arguments;
\item[(7)]  a binary injection of type $\max$ that is balanced in both arguments;
\item[(8)]  a binary injection of type $\max$ that is $E$-dominated in both arguments;
\item[(9)]  a binary injection of type $p_1$ that is $E$-dominated in both arguments;
\item[(10)]  a binary injection of type $p_1$ that is balanced in the first
and $E$-dominated in the second argument;
\end{itemize}
or to one of the duals of the last four operations (the operation in (6) is self-dual).
\end{theorem}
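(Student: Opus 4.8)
The plan is to glue together results already proven rather than to do anything new. Everything splits into the unary classification of Theorem~\ref{thm:mainUnary} and the essential case handled by Lemma~\ref{lem:en}, Theorem~\ref{thm:bin-inj}, Proposition~\ref{prop:essentialMinimalIndependentOfOrder} and Theorem~\ref{thm:minimalFunctions}. The first thing I would record is that a minimal function of arity at least $2$ is automatically \emph{essential}: if $e\colon V^n\to V$ with $n\geq 2$ were essentially unary, say $e(x_1,\dots,x_n)=g(x_i)$, then $g$ is non-trivial (otherwise $e$ is trivial), $e$ generates $g$, and $g$ has arity $1<n$, contradicting minimality of $e$. Hence every minimal function is either unary or essential.

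If $e$ is unary, then Theorem~\ref{thm:mainUnary} already gives that $e$ is equivalent to exactly one of a constant operation, $e_N$, $e_E$, $-$, $\sw$. Now suppose $e$ is essential, hence of arity at least $2$. By Lemma~\ref{lem:en}, $e$ preserves $E$ and $N$; by Theorem~\ref{thm:bin-inj}, $e$ generates a binary injection $f$. Such an $f$ is itself essential, since a binary operation that ignored one of its arguments could not be injective; and an essential operation is never trivial, since (by a one-line induction, stable under local closure) every operation generated by $\{\id\}\cup\Aut(G)$ has the form $x\mapsto\alpha(x_i)$ and is therefore essentially unary. So $f$ is non-trivial, and since $e$ is minimal we get $\operatorname{arity}(f)\geq\operatorname{arity}(e)\geq 2$; thus $\operatorname{arity}(e)=2$ and $f$ is equivalent to $e$. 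In particular $e$ is equivalent to a minimal binary injection, so Proposition~\ref{prop:essentialMinimalIndependentOfOrder} together with Theorem~\ref{thm:minimalFunctions} shows that $e$ is equivalent to a binary injective canonical operation of exactly one of the nine types (1)--(9) of Theorem~\ref{thm:minimalFunctions}, and that operations of distinct types do not generate one another while those of the same type do.

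It then remains to translate those nine order-based types into the order-free descriptions (6)--(10) of the present theorem together with their four duals. Unwinding the definitions: ``behaves like $\id$ on input $(\neq,=)$'' is exactly ``balanced in the first argument'', ``behaves like $e_E$ on input $(=,\neq)$'' is exactly ``$E$-dominated in the second argument'' (and likewise with the arguments interchanged and with $e_N$/``$N$-dominated''); ``type $p_1$ on input $(\neq,\neq)$'' is precisely ``of type $p_1$'' as defined before Theorem~\ref{thm:main}, and ``type $\max$ on input $(\neq,\neq)$'' is precisely ``of type $\max$''. Since exchanging the two arguments turns type $p_1$ into type $p_2$ and fixes $\max$ and $\min$, ``projection'' in Theorem~\ref{thm:minimalFunctions} corresponds up to equivalence to type $p_1$. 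Chasing the bookkeeping, and using that the $*$-dual sends $\max$ to $\min$ and $E$-dominated to $N$-dominated while fixing type $p_1$ and ``balanced'', one obtains the matching: type (1) $\leftrightarrow$ operation (6) (self-dual); (2) $\leftrightarrow$ (7) and (3) $\leftrightarrow$ its dual; (4) $\leftrightarrow$ (8) and (5) $\leftrightarrow$ its dual; (6) $\leftrightarrow$ (9) and (7) $\leftrightarrow$ its dual; (8) $\leftrightarrow$ (10) and (9) $\leftrightarrow$ its dual. This also yields the distinctness of the fourteen classes: the five unary operations are pairwise inequivalent by Theorem~\ref{thm:mainUnary}, the nine binary classes are pairwise non-generating by Theorem~\ref{thm:minimalFunctions}, and no unary function generates an essential operation (everything it generates is essentially unary), so unary and binary classes are distinct; minimality of each listed operation is part of Theorems~\ref{thm:mainUnary} and~\ref{thm:minimalFunctions} respectively.

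The only place carrying genuine (though still routine) work is this last translation: checking that no ``mixed'' type $X/Y$ with $X\neq Y$ survives (the content of Proposition~\ref{prop:essentialMinimalIndependentOfOrder}, already available) and verifying the behaviour of the $*$-dual on the types $\max$, $\min$, $E$-dominated, $N$-dominated and on ``balanced'' and ``type $p_i$'', so that the duals of (7)--(10) are exactly the classes (3), (5), (7), (9) of Theorem~\ref{thm:minimalFunctions}. Everything else is a direct appeal to the cited statements.
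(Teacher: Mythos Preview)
Your proof is correct and follows essentially the same route as the paper: split into the unary case (Theorem~\ref{thm:mainUnary}) versus the essential case, then invoke Lemma~\ref{lem:en}, Theorem~\ref{thm:bin-inj}, and Theorem~\ref{thm:minimalFunctions}, and finish by matching the nine types of Theorem~\ref{thm:minimalFunctions} with (6)--(10) and their duals exactly as the paper does. You are in fact slightly more explicit than the paper in two places---arguing that a minimal function of arity $\geq 2$ must be essential, and that the binary injection produced by Theorem~\ref{thm:bin-inj} is equivalent to $e$---where the paper simply asserts ``it must be binary and injective by Theorem~\ref{thm:bin-inj}''.
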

\begin{proof}
If $e$ is not essential, then it generates, and hence is equivalent to, a constant operation, $e_N$, $e_E$, $\flip$  or $\sw$; this is the content
of Theorem~\ref{thm:mainUnary}, which we already proved in Section~\ref{sect:minimalUnary}. We also have argued that these functions do not
generate each other.

If $e$ is essential, then it must preserve
$E$ and $N$, by Lemma~\ref{lem:en},
and it must be binary and injective by Theorem~\ref{thm:bin-inj}.
By Proposition~\ref{thm:minimalFunctions} it must be canonical and of one of the 9
types listed there; moreover, Proposition~\ref{thm:minimalFunctions} shows that
functions of different types do not generate each other. Observe that in Proposition~\ref{thm:minimalFunctions}, class (1) is (6) here,
(2) is (7) here, (3) is the dual of (7) here, (4) is (8) here, (5) is the dual of (8) here, (6) is (9) here, (7) is the dual of (9) here, (8) is (10) here, and (9) is the dual of (10) here.
\end{proof}

We conclude this paper by remarking that the relations that are preserved by one of the
essential operations in Theorems~\ref{thm:mainUnary} and~\ref{thm:main} (i.e., the relations of the structures in Corollary~\ref{thm:mainForStructures}) also have syntactic descriptions.
For instance, it is not hard to show (see~\cite{Maximal}) that a relation $R$ with a first-order definition in $G$
is preserved by a binary operation of type $\min$ that is $N$-dominated in both arguments if and only if $R$ can be
defined by a quantifier-free Horn formula over $(V; E,=)$
(i.e., by a quantifier-free formula in conjunctive normal form where
each clause contains at most one literal of the form $E(x,y)$ or $x=y$).

\bibliographystyle{plain}
\bibliography{global.bib}

\end{document}